\setlist[enumerate,1]  {label={\rm (\roman*)}, leftmargin=1.5em}   
\setlist{noitemsep} 
\newtheorem{theorem}{Theorem}[section]
\newtheorem{proposition}[theorem]{Proposition}
\newtheorem{lemma}[theorem]{Lemma}
\newtheorem{corollary}[theorem]{Corollary}
\newtheorem{remark}[theorem]{Remark}
\renewenvironment{proof}[1][Proof] {\noindent \textbf{#1.} }
{\  \rule{0.5em}{0.5em}\par \medskip}
\newcommand{\hole}{\mathscr{C}}
\newcommand{\myeq}[1]{\ensuremath{\stackrel{\text{#1}}{=}}}
\newcommand{\myleq}[1]{\ensuremath{\stackrel{\text{#1}}{\leq}}}
\newcommand{\mygeq}[1]{\ensuremath{\stackrel{\text{#1}}{\geq}}}
\newcommand{\defeq}{\vcentcolon=}
\newcommand{\eqdef}{=\vcentcolon}
\newcommand\restr[2]{\ensuremath{{#1}_{|_{#2}}}}
\renewcommand{\abs}[1]{\left| #1 \right|}
\newcommand{\D}{\displaystyle}
\newcommand{\eps}{\varepsilon}
\newcommand*\Lap{\mathop{}\!\mathbin\Delta}
\newcommand*\adh[1]{\overline{#1}} 
\newcommand{\RN}{{\mathbb{R}^N}}
\newcommand{\R}{\mathbb{R}}
\newcommand{\N}{\mathbb{N}}
\def\XXint#1#2#3{{\setbox0=\hbox{$#1{#2#3}{\int}$ }
		\vcenter{\hbox{$#2#3$ }}\kern-.580\wd0}}
\begin{document}
	
	\title{Asymptotic behaviour of the heat equation in an exterior domain
		with general boundary conditions I. The case of
                integrable data.}

	\author{ Joaquín Domínguez-de-Tena${}^{*,1}$ \\
		An\'{\i}bal Rodr\'{\i}guez-Bernal\thanks{Partially supported
			by Projects PID2019-103860GB-I00 and  PID2022-137074NB-I00,  MICINN and   GR58/08
			Grupo 920894, UCM, Spain}\ ${}^{,2}$}

	\date{\today}
	\maketitle

	\setcounter{footnote}{2}
	\begin{center}
		Departamento de Análisis Matemático y  Matem\'atica Aplicada\\ Universidad
		Complutense de Madrid\\ 28040 Madrid, Spain \\ and \\
		Instituto de Ciencias Matem\'aticas \\
		CSIC-UAM-UC3M-UCM\footnote{Partially supported by ICMAT Severo Ochoa
			Grant CEX2019-000904-S funded by MCIN/AEI/ 10.13039/501100011033} , Spain 
	\end{center}
	
	\makeatletter
	\begin{center}
		${}^{1}${E-mail:
			joadomin@ucm.es}
		\\ 
		${}^{2}${E-mail:
			arober@ucm.es}
	\end{center}
	\makeatother

	\noindent {$\phantom{ai}$ {\bf Key words and phrases:}  Heat equation,
		exterior domain, mass loss, asymptotic profiles, asymptotic behaviour, decay rates, Dirichlet, Neumann,
		Robin boundary conditions.} 
	\newline{$\phantom{ai}$ {\bf Mathematical Subject Classification
			2020:} \
		35K05, 35B40, 35B30, 35E15}

	\begin{abstract}
		In this work, we study the asymptotic behaviour of solutions to the heat equation in exterior
		domains, i.e., domains which are the complement of a
                smooth  compact set in $\RN$. Different homogeneous
		boundary conditions are considered, including
                Dirichlet, Robin, and Neumann conditions for 
                integrable initial data in $L^1(\Omega)$. After taking
                into account the loss of mass of the solution through
                the boundary, depending on the boundary conditions, we describe the asymptotic spatial
                distribution of the remaining mass in terms of the
                Gaussian and of a suitable asymptotic profile
                function. We show that our results have optimal time
                rates. 
	\end{abstract}

	\normalsize

        
\section{Introduction}

  In this paper we consider the heat equation

  \begin{equation}
    \label{eq:intro_heat}
	\left\{
	\begin{aligned}
		u_t-\Lap u = 0 \quad & in \ \Omega\times(0,\infty) \\
		B(u)=0 \quad & on \ \partial\Omega\times (0,\infty) \\
		u=u_0 \quad & in \ \Omega\times\{0\} , 
	\end{aligned}	
	\right. 
\end{equation}
in   a connected unbounded exterior domain $\Omega$, that is, the
complement of a compact set $\hole$ that we denote the \emph{hole},
which is the closure of a bounded smooth set; hence,
$\Omega=\RN\backslash \hole$.  We will assume,  without loss of
generality, that  $0\in \mathring{\hole}$,
the interior of the hole, and observe that $\hole$ may have different
connected components, although $\Omega$ is connected. The boundary
conditions, to be made precise in Section \ref{sec:preliminares}, include
Dirichlet, Neumann and Robin one,  of the form $B(u)= \frac{\partial u}{\partial 
  n}+b u=0$ with $b >0$.

If we consider  nonnegative  initial data in  $L^1(\Omega)$, which
are  the most interesting  from  the physical and
probabilistic interpretation of the heat equation, 
then we see that the
hole and the boundary conditions imply that there is  some loss of
mass of the solution through the boundary of the hole in the case of
Dirichlet and Robin boundary conditions. Naturally, there is no loss
for Neumann one.  Actually,
integrating the equation in $\Omega$, we obtain 
\begin{equation} \label{eq:loss_of_mass_L1}
	\frac{d}{dt}\int_\Omega u(x,t)dx=\int_\Omega \Lap u(x,t)dx
        =\int_{\partial \Omega} \frac{\partial u}{\partial n}(x,t)dx
        . 
\end{equation}
So, since $u\geq 0$, for Dirichlet boundary conditions we have
$\restr{u}{\partial \Omega}=0$ and then 
$\frac{\partial u}{\partial n} \leq 0$ on $\partial \Omega$ and then
$\D \int_\Omega u(x,t)dx$ decreases in
time although we have no quantitative estimate of the decay. 
The same argument holds for Robin boundary conditions $\frac{\partial u}{\partial 
  n}+b  u=0$ with $b >0$, while for Neumann, $b=0$, the 
mass is conserved. This is in sharp contrast with the case
$\Omega=\R^{N}$ where the mass of every solution is conserved and is
due to the presence of the hole and the boundary conditions.

Therefore in  \cite{DdTRB23} the problem of 
understanding and determining the amount of mass lost for any given
solution was addressed. It turned out that the answer depends on  the
dimension. Indeed, the exact amount of mass lost for each initial data $u_{0} \in
L^{1}(\Omega)$ can be computed as follows: there exists a nonnegative function,
$\Phi$, denoted the \emph{asymptotic profile},
determined
by the domain and boundary conditions alone, such that the amount of
mass not lost through the hole by a solution with initial data $u_{0}\in
L^{1}(\Omega)$, that is, the \emph{asymptotic mass} of the solution, 
is given by
\begin{equation}
  \label{eq:asymptotic_mass}
  m_{u_0} \defeq \lim_{t\to\infty}\int_\Omega u(x,t)\, dx = \int_\Omega u_0(x) \Phi (x) \,
  dx . 
\end{equation}
Of course,  $\Phi \equiv 1$ for Neumann boundary conditions in any dimensions
(hence no loss of mass at all for any solution), while for Robin or
Dirichlet boundary conditions, if $N\leq 2$ then $\Phi \equiv 0$. That is,
all mass is lost through the boundary. On the other hand, if $N \geq
3$, then
\begin{equation}
	\label{eqn:estasymptoticprofile}
  1-\frac{C}{\abs{x}^{N-2}}\leq \Phi(x) \leq 1 \qquad 
  x\in \Omega 
\end{equation}
Hence, if  $N\geq 3$, then there is  a
certain remaining mass, while if $N\leq 2$, all the mass is 
lost through the hole. 
The function $\Phi $ is a harmonic function
in $\Omega$, $\Phi  \in C^2(\overline{\Omega})\cap
C^{\infty}(\Omega)$ and satisfies the boundary conditions 
$B(\Phi)\equiv 0$ on $\partial \Omega$. It can be constructed either
as the monotonically decreasing limit
\begin{equation}
  \label{eq:asymptotic_profile_parabolic}
	\Phi(x) =\lim_{t\to\infty} u(x,t; 1_\Omega) \quad x\in \Omega, 
\end{equation}
that is, the solution of \eqref{eq:intro_heat} with $u_{0} =
1_\Omega$, 
or as  the monotonically decreasing limit 
\begin{equation}
  \label{eq:asymptotic_profile_elliptic}
	\Phi (x)=\lim_{R\to\infty} \phi_R(x) \quad x\in \Omega, 
\end{equation}
where $ \phi_R$ are harmonic in $ \Omega_R\defeq \Omega\cap B(0,R)$
and satisfy $B(\phi_{R})(x) =0$ for $x\in\partial \Omega$ and $
\phi_{R}(x) =1$ if $\abs{x}=R$, 
see  \cite{DdTRB23} Section 3. Finally, rates of convergence in
\eqref{eq:asymptotic_mass} were also given in \cite{DdTRB23}.

Since it can be shown that the supremum of the solution tends to zero
as $t\to \infty$, see \eqref{eqn:LpLq_estimates_theta}, we see then
that, as $t\to \infty$,  the  asymptotic  mass of the initial data is diffused to
infinity, that is to the region $|x|\to \infty$. Hence, in  this paper  we address the question of the spatial distribution of
the asymptotic mass of the solution.

For the case of no hole, that is,  $\Omega=\R^{N}$ this
problem has been addressed in, e.g.,  \cite{duoandikoetxea}, or the
  first chapter in the book \cite{giga} or in   \cite{vazquez2017asymptotic}.  All these results exploit the
  explicit form of the heat kernel in $\R^{N}$ and the  outcome is that
  the mass of the solutions distributes in $\R^{N}$ as the total mass
  of the initial data times 
  the Gaussian heat kernel in $\R^{N}$. In \cite{duoandikoetxea},  the authors describe fine
  asymptotics of the solution in terms of finite  momentums of the initial
  datum, by showing that the more momentums the initial data has, the more
  asymptotic terms can be determined in terms of the momentums and
  derivatives of the heat kernel. 
For the case of an exterior domain, related results have been obtained
for Dirichlet boundary conditions for porous media equations in 
  \cite{quiros2007} for porous media or for non-local diffusion
  problems in  \cite{cortazar2012asymptotic}. For the heat equation we
  are only aware of the results in \cite{Herraiz1998}, again for
  Dirichlet boundary conditions, which we will
  discuss in Remark \ref{rem:Herraiz_result}.

As in \cite{duoandikoetxea, giga, vazquez2017asymptotic} we are going
to show that the Gaussian still describes the asymptotic spatial
distribution of mass with some corrections. First, as we have seen, we have a phenomenon of 
loss of mass. Therefore, the Gaussian will be multiplied by the 
asymptotic mass $m_{u_0}$ in \eqref{eq:asymptotic_mass}. Second, we  will also
have to take into account the boundary conditions on the hole and therefore
the asymptotic profile, $\Phi$ as in
\eqref{eq:asymptotic_profile_parabolic},
\eqref{eq:asymptotic_profile_elliptic},  will show up in the estimate as
well. To be more precise, our main result for integrable initial data,
that will be proved in Section \ref{sec:asympt-behav-Lpnorm}, see Theorem
\ref{thm:asympfinal},  is the following.

\begin{theorem}
	\label{thn:intro_asympfinal}

Assume  $N\geq 3$,   or  $N=2$ and we do not have Neumann boundary conditions,
and $u_0\in L^1(\Omega)$. Let $u(x,t)$ be
the solution of \eqref{eq:intro_heat}. Then, for any $1\leq p \leq \infty$, 
\begin{equation}
\lim_{t\to\infty} t^{\frac{N}{2}(1-\frac{1}{p})} \norm{u(\cdot,t) -
  m_{u_0} \Phi(\cdot) G(\cdot ,t)}_{L^p(\Omega)}=0,
\end{equation}
where $m_{u_0}=\int_{\Omega} \Phi(x)u_0(x)dx$ is the
asymptotic mass and $G(x,t) =\frac{e^{-\frac{\abs{x}^2}{4t}}}{(4\pi 
  t)^{\frac{N}{2}}}$ is the Gaussian. 
\end{theorem}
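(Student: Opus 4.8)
The plan is to reduce the statement to its two endpoints and to identify the Gaussian as the unique self-similar long-time limit, the asymptotic mass $m_{u_0}$ from \eqref{eq:asymptotic_mass} entering as the surviving total mass.

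First, the reductions. The Neumann case in dimension $N\ge3$ has $\Phi\equiv1$ and conserves mass, while $N=2$ without Neumann has $\Phi\equiv0$, $m_{u_0}=0$, so the claim becomes the faster-than-generic decay $t^{1-1/p}\norm{u(\cdot,t)}_{L^p}\to0$; both are limiting cases of the argument below (total conservation and total dissipation). For the rest, writing $w(\cdot,t)=u(\cdot,t)-m_{u_0}\Phi G(\cdot,t)$, the interpolation inequality $\norm{w}_{L^p}\le\norm{w}_{L^1}^{1/p}\norm{w}_{L^\infty}^{1-1/p}$ gives $t^{\frac N2(1-\frac1p)}\norm{w}_{L^p}\le\norm{w}_{L^1}^{1/p}\big(t^{N/2}\norm{w}_{L^\infty}\big)^{1-1/p}$, so it suffices to treat $p=1$ and $p=\infty$. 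Using $0\le1-\Phi\le C\abs{x}^{2-N}$ from \eqref{eqn:estasymptoticprofile} and the change of variables $x=\sqrt t\,\xi$ one checks $\norm{(1-\Phi)G(\cdot,t)}_{L^1(\Omega)}\le C t^{1-\frac N2}\to0$ for $N\ge3$, so the $p=1$ endpoint reduces to $\norm{u(\cdot,t)-m_{u_0}G(\cdot,t)}_{L^1(\Omega)}\to0$; for $p=\infty$ the factor $\Phi$ is genuinely of order one near the hole and must be kept. Finally, $u_0\mapsto t^{\frac N2(1-\frac1p)}\norm{u(\cdot,t;u_0)-m_{u_0}\Phi G}_{L^p}$ is subadditive and, by \eqref{eqn:LpLq_estimates_theta}, by $0\le\Phi\le1$ and by $\norm{G(\cdot,t)}_{L^p}=c_p\,t^{-\frac N2(1-\frac1p)}$, bounded by $C\norm{u_0}_{L^1}$ uniformly in $t$; a density argument then lets me assume $u_0$ lies in a dense class and, splitting into positive and negative parts, that $u_0\ge0$ is bounded with compact support.

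For the $p=1$ endpoint I would run the parabolic rescaling $u_\lambda(y,s)\defeq\lambda^N u(\lambda y,\lambda^2 s)$, which solves the same boundary value problem on the expanding domain $\Omega_\lambda\defeq\RN\setminus\lambda^{-1}\hole$. Since $\lambda^N G(\lambda y,\lambda^2 s)=G(y,s)$, the reduced claim $\norm{u(\cdot,t)-m_{u_0}G(\cdot,t)}_{L^1(\Omega)}\to0$ is exactly $\norm{u_\lambda(\cdot,1)-m_{u_0}G(\cdot,1)}_{L^1(\Omega_\lambda)}\to0$ as $\lambda\to\infty$. I would obtain this from: uniform bounds and equi-integrability of $\{u_\lambda(\cdot,s)\}$ via \eqref{eqn:LpLq_estimates_theta} and Gaussian tail estimates; interior parabolic regularity on $\RN\setminus\{0\}$, the shrinking hole $\lambda^{-1}\hole$ becoming a removable point, to extract subsequential limits $U$ solving the heat equation on $\RN$; and identification of $U$ by self-similarity, since $(u_\lambda)_\mu=u_{\lambda\mu}$ forces $U_\mu=U$, whence $U=c\,G$ with $c=m_{u_0}$ because $\int_\Omega u(\cdot,\lambda^2 s)\to m_{u_0}$ by \eqref{eq:asymptotic_mass}. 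Uniqueness of the limit upgrades subsequential to full convergence, i.e. the $o(\cdot)$ rate.

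The $p=\infty$ endpoint is where I expect the real work. Here I would use the long-time heat-kernel asymptotics $t^{N/2}G_\Omega(x,y,t)\to(4\pi)^{-N/2}\Phi(x)\Phi(y)$, with $G_\Omega$ the kernel of \eqref{eq:intro_heat}; integrating against a compactly supported $u_0$ and using $t^{N/2}\abs{G(x-y,t)-G(x,t)}\le C t^{-1/2}\abs{y}$ uniformly in $x$ then gives $t^{N/2}\big(u(x,t)-m_{u_0}\Phi(x)G(x,t)\big)\to0$ uniformly, the factor $\Phi(x)$ surviving near the hole. The main obstacle is precisely this kernel estimate, uniform simultaneously in the near-hole zone (where $\Phi$ cannot be replaced by $1$) and in the diffusive far field $\abs{x}\sim\sqrt t$: I would derive it by comparing $G_\Omega$ with the approximate kernel $\Phi(x)\Phi(y)G(x-y,t)$, which solves \eqref{eq:intro_heat} up to the source $-2\nabla_x\Phi\cdot\nabla_x G$ and a boundary defect that vanishes for Dirichlet and is lower order for Robin, and controlling the Duhamel correction by \eqref{eqn:LpLq_estimates_theta} and the profile decay \eqref{eqn:estasymptoticprofile}. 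The two genuinely delicate points are this uniform near-hole control and, already for $p=1$, the tightness of the rescaled family for arbitrary $L^1$ data together with the varying-domain compactness $\Omega_\lambda\uparrow\RN$, which must ensure that in the limit no mass disappears beyond the loss already recorded by $m_{u_0}$.
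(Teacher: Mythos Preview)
Your reduction to the endpoints and the interpolation are exactly what the paper does (Theorem~\ref{thm:asympfinal}), and your observation that for $p=1$ one may drop the factor $\Phi$ via $\norm{(1-\Phi)G(\cdot,t)}_{L^1}\to 0$ is also the paper's first step there. Beyond this shared skeleton, however, your two endpoint arguments are genuinely different from the paper's.

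For $p=1$ the paper does \emph{not} rescale. Instead it proves a pointwise kernel comparison (Lemma~\ref{lemma:kerncoml1bis}): $\int_\Omega|k^\theta(x,y,t)-k_{\RN}(x,y,t)|\,dx\le 2(1-\Phi^0(y))+o(1)$, which is small when $y$ is far from the hole. Then (Lemma~\ref{lemma:lemmaprev2bis}, Theorem~\ref{thm:asymL1Rbis}) one waits until most of the mass of $u(\cdot,T_1)$ lies outside a large ball, truncates near the hole, and compares with the $\RN$ evolution. Your rescaling route would also work and is essentially the mechanism the paper uses elsewhere---for the \emph{far-field} $L^\infty$ estimate (Theorem~\ref{thm:asymfarR}), where the Gaussian upper bound \eqref{eqn:gyryabound} supplies exactly the domination you need to upgrade local uniform convergence to $L^1$ via Lebesgue's theorem. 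So your $L^1$ plan is viable; the paper simply chooses a more direct, non-compactness argument.

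For $p=\infty$ the divergence is more substantial. The paper does \emph{matching asymptotics}: in the far zone $\{|x|^2\ge\delta t\}$ the rescaling argument gives $t^{N/2}|u-m_{u_0}G|\to 0$ (Theorem~\ref{thm:asymfarR}), and since $\Phi\to 1$ there the profile can be inserted for free. In the near zone $\{|x|^2\le\delta t\}$ the paper builds explicit barriers $v^\pm=\Phi\,G\pm\sigma Z$ (Theorem~\ref{thm:asymcloseR}), where the crucial ingredient is the auxiliary function $Z(x,t)=t^{-(N+\gamma)/2}\big(|x|^{-\gamma}+\kappa(1-\Phi^0)\big)$ of Lemma~\ref{lemma:ZMbis}. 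This $Z$ is engineered so that (a) $Z_t-\Delta Z$ dominates the cross term $2|\nabla\Phi||\nabla G|$ in the near zone, (b) $\partial_n Z>0$ on $\partial\Omega$ handles the Robin defect, and (c) $t^{N/2}Z\to 0$. The far-field result supplies the lateral boundary data on $\{|x|^2=\delta t\}$, and a parabolic comparison in the time-dependent region closes the argument. Your Duhamel proposal with the approximate kernel $\Phi(x)\Phi(y)G(x-y,t)$ aims at the same conclusion, but the naive estimates do not close: the source $2\nabla\Phi\cdot\nabla G$ has $L^1$ norm of order $s^{-N/2}$ and $L^\infty$ norm of order $s^{-(N+1)/2}$ near the hole, and straightforward $L^1\to L^\infty$ or $L^\infty\to L^\infty$ Duhamel bounds produce contributions that are either divergent at $s=0$ or only $O(t^{-(N-1)/2})$, not $o(t^{-N/2})$. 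The paper's barrier avoids this by absorbing the source into a supersolution rather than integrating it; the construction of $Z$ is precisely the missing idea in your sketch, and it is what makes the near-hole uniformity work across Dirichlet, Robin and Neumann simultaneously.
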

This will be obtained from interpolation from the extreme cases $p=1$
in Section \ref{sec:asympt-behav-L1norm} 
and $p=\infty$ in Section \ref{sec:asympt-behav-Linftynorm}, for which
we will also prove that the decay rate is 
optimal, see Theorems \ref{thm:optimal_rate_L1} and
\ref{thm:optimal_rate_Linfty}.

Besides these results, in Section \ref{sec:preliminares} we make
precise the boundary conditions in \eqref{eq:intro_heat} and introduce
some preliminary results on the solutions and on the asymptotic
profile $\Phi$. Also, we included some short  appendixes were we collected
some technical  auxiliar results needed for some of the proofs. 
In a forthcoming  paper \cite{DdTRB24},  we analyse the case of initial data in
$L^{p}(\Omega)$  with $1<p\leq\infty$. 

In the work in preparation  \cite{quiroscanizo24},   for Dirichlet boundary
    conditions, using entropy methods and assuming that $u_{0}$ has
    some finite momentums, similar results to Theorem
    \ref{thn:intro_asympfinal} are obtained together with finer convergence
    rates depending on the momentum of the initial data.

Throughout  this paper, we adopt
the convention of using $c$ and  $C$ to represent various constants 
which may change from line to line, and whose concrete value is not
relevant for the results.

\section{Notations and some  preliminary  results}
\label{sec:preliminares}

All along this paper we  consider an exterior domain  $\Omega =
\RN\backslash \hole$ as in the Introduction, that is, the
complement of a compact set $\hole$, the \emph{hole},
which is the closure of a bounded smooth set and we will assume 
$\partial \Omega$ is of class $C^{2,\alpha}$ for some $0<\alpha<1$. 
We will also assume,  without loss of generality,  $0\in \mathring{\hole}$,
the interior of the hole, and observe that $\hole$ may have different
connected components, although $\Omega$ is connected. 

In $\Omega$ we consider  the heat equation 
\begin{equation} \label{eq:heat_theta} 
	\left\{
	\begin{aligned}
		u_t-\Lap u = 0 \quad & in \ \Omega\times(0,\infty) \\
		B_\theta(u)=0 \quad & on \ \partial\Omega\times[0,\infty) \\
		u=u_0 \quad & in \ \Omega\times\{0\} , 
	\end{aligned}	
	\right. 
\end{equation}
where   we  consider Dirichlet, Robin or
Neumann homogeneous boundary conditions on $\partial \Omega$,  written in the form 
\begin{equation}
	\label{eqn:thetabc}
	B_\theta(u)\defeq \sin(\frac{\pi}{2}\theta(x))\frac{\partial u}{\partial n}+\cos(\frac{\pi}{2}\theta(x))u,
\end{equation}
where $\theta: \partial \Omega \longrightarrow [0,1]$ is of class
$C^{1,\alpha}(\partial \Omega)$ for some $0<\alpha<1$ and 
satisfies either  one of the
following cases in each connected component of $\partial \Omega$: 
\begin{enumerate}
	\item Dirichlet conditions: $\theta\equiv 0$
	\item Mixed Neumann and Robin conditions:
	$0<\theta\leq 1$. 
\end{enumerate}
In particular, if $\theta\equiv 1$ we recover Neumann
boundary conditions. 
In general, we will refer to these as homogeneous $\theta$-boundary
conditions.    Note that, by suitably choosing $\theta(x)$,
\eqref{eqn:thetabc} includes all boundary conditions of the 
form $\frac{\partial u}{\partial n}+b(x)u=0$. The restriction $0\leq
\theta \leq 1$ makes $b(x)\geq 0$ which is the standard dissipative
condition. The reason for these notations will be seen in the results
below about monotonicity of solutions with respect to $\theta$, see
\eqref{eqn:neugeqdir2} and \eqref{eq:comparison_kernels_theta}.

  In general we will use a superscript $\theta$ to denote anything
  related to \eqref{eq:heat_theta}.  For example, the semigroup of
  solutions to \eqref{eq:heat_theta} will be denoted by $S^\theta(t)$
  and the associated kernel by $k^\theta(x,y,t)$. Sometimes, we will add as subscript
  $\Omega$ to indicate the dependence of these objects in the domain.

Then, from the results in \cite{DdTRB23},  \eqref{eq:heat_theta} defines a semigroup of solutions as
$u(t;u_{0})= S^{\theta}(t) u_{0}$  for several classes of initial
data.  Actually the  semigroup $\{S^\theta(t)\}_{t>0}$   is  an order
preserving  semigroup of contractions in $L^p(\Omega)$ 
for $1\leq p\leq\infty$ which is $C^0$ if $p\neq \infty$ and
analytic if $1<p<\infty$. In particular, for  any
$u_0$ in those spaces, 
	\begin{displaymath}
		\abs{S^\theta(t)u_0(x)}\leq S^\theta(t)\abs{u_0}(x), \qquad x\in
		\Omega, \ t>0 . 
	\end{displaymath}

Also, 
for $1\leq p\leq \infty$, 
\begin{equation}
	\label{eqn:Sexchange}
	\int_\Omega fS^\theta(t)g = \int_\Omega gS^\theta(t)f  \qquad
	\mbox{for all} \  f\in L^p(\Omega), \   g\in L^q(\Omega) 
\end{equation}
where $q$ is the conjugate of $p$,  that is
$\frac{1}{p}+\frac{1}{q}=1$. Hence, for $1\leq p<\infty$, the semigroup in $L^{q}(\Omega)$
is the adjoint of the semigroup in $L^{p}(\Omega)$. In particular, the
semigroup in $L^{\infty}(\Omega)$ is weak-* continuous.

In addition, in  $L^p(\Omega)$ for  $1<p<\infty$, the generator of the
semigroup is  the Laplacian  with domain
\begin{equation}
	D^{p}(\Lap_\theta)=\{u\in W^{2,p}(\Omega) \ : \ B_\theta(u) =
        0  \ 	\mbox{on $\partial\Omega$} \} 
      \end{equation}
and is  a sectorial operator, see  \cite{denk2004new}  and
      \cite{DdTRB23} for a simple proof when $p=2$. Therefore, for $1\leq p<\infty$ the semigroup above provides the unique solution
      of \eqref{eq:heat_theta}, see e.g. Section 4.1 in \cite{pazy}.

If $p=\infty$, as in \cite{lunardi} Corollary 3.1.21 and
Corollary 3.1.24, the generator is  the Laplacian  with domain
\begin{equation} \label{eq:domain_Linfty}
	D^\infty(\Lap_\theta)=\{u\in \bigcap_{p\geq
          1}W^{2,p}_{loc}(\adh{\Omega}) \ : \ u, \Lap u \in
        L^\infty(\Omega), \ B_\theta(u) = 0 \ 	\mbox{on $\partial\Omega$} \}
      \end{equation}
      and is  also a sectorial operator with a non dense domain.
      
Note
that, by the Sobolev embeddings, $D^\infty(\Lap_\theta)\subset
C^{1+\alpha}(\adh{\Omega})$ for any $\alpha \in (0,1)$.

Moreover, the semigroup has an integral positive
kernel, that is, $k^\theta: 
\Omega \times \Omega \times (0,\infty) \to
(0,\infty)$ such that for all $1\leq p\leq\infty$ and
$u_{0}\in L^{p}(\Omega)$,  
\begin{equation}
	\label{eqn:ackrnpre}
	S^\theta(t)u_0(x)=\int_\Omega k^\theta(x,y,t)u_0(y)dy
	, \qquad x\in \Omega, \quad t>0. 
\end{equation}
In addition, $k^\theta(x,y,t)=k^\theta(y,x,t)$, a property that reflects
the selfadjointness of the semigroup.

If we consider  $S^{\theta_1}(t)$ and $S^{\theta_2}(t)$   the
semigroups above for different $\theta$-boundary
conditions  we have that if $	0 \leq \theta_1\leq
\theta_2\leq 1 $ then for $u_{0}\geq 0$ we have
\begin{equation}
	\label{eqn:neugeqdir2}
S^{\theta_1}(t)u_0\leq	S^{\theta_2}(t)u_0 \quad t>0 , 
\end{equation}
or equivalently,  the 
corresponding heat kernels satisfy 
\begin{equation}
\label{eq:comparison_kernels_theta}
  0< k^{\theta_1}(x,y,t)\leq k^{\theta_2}(x,y,t) \qquad
	x,y\in \Omega,  \ t>0. 
      \end{equation}
In particular,  for any $\theta$-boundary conditions we have  Gaussian upper
bounds for the  heat kernel of the form
	\begin{equation}
		\label{eqn:gyryabound}
		0 < k^\theta(x,y,t) \leq C\frac{e^{-\frac{\abs{x-y}^2}{4ct}}}{t^{N/2}} \qquad
		x,y\in \Omega , \quad t>0 
              \end{equation}
for some  constants $c,C>0$, since they hold for Neumann boundary
conditions (see \cite{gyryathesis} and  also \cite{gyrya2011neumann} Theorem 3.10), that is
for $\theta \equiv 1$, and \eqref{eq:comparison_kernels_theta}, 
see \cite{DdTRB23} Section 2.

The bounds above imply, by using Young's inequality for convolutions,

\begin{corollary}
  \label{cor:LpLq_estimates}
  For any $u_{0}\in L^{p}(\Omega)$ and $1\leq p\leq q \leq \infty$ we
  have 
          \begin{equation}
     \label{eqn:LpLq_estimates_theta}
     \norm{S^\theta(t)u_0}_{L^{q}(\Omega)}\leq
     \frac{C}{t^{\frac{N}{2}(\frac{1}{p}- \frac{1}{q})}} \norm{u_0}_{L^{p}(\Omega)} \quad t>0 . 
   \end{equation}
\end{corollary}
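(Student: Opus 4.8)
The plan is to reduce the kernel estimate to a genuine convolution on $\RN$ and then apply Young's inequality for convolutions, the powers of $t$ coming out automatically from the scaling of the Gaussian. First I would fix $u_0\in L^p(\Omega)$ and extend $\abs{u_0}$ by zero outside $\Omega$; this does not change its $L^p$ norm. Using the pointwise domination $\abs{S^\theta(t)u_0}\leq S^\theta(t)\abs{u_0}$, the kernel representation \eqref{eqn:ackrnpre}, and the Gaussian upper bound \eqref{eqn:gyryabound}, I would write, for every $x\in\Omega$ and $t>0$,
\[
\abs{S^\theta(t)u_0(x)}\leq \int_\Omega k^\theta(x,y,t)\abs{u_0(y)}\,dy \leq \frac{C}{t^{N/2}}\int_{\RN} g_t(x-y)\abs{u_0(y)}\,dy,
\]
where $g_t(z)\defeq e^{-\abs{z}^2/(4ct)}$ and the integral may be taken over all of $\RN$ precisely because $\abs{u_0}$ vanishes off $\Omega$. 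Thus the right-hand side equals $C\,t^{-N/2}\,(g_t\ast\abs{u_0})(x)$, a true convolution on $\RN$.

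The second step is to fix the Young exponent. For $1\leq p\leq q\leq\infty$ I would define $r\in[1,\infty]$ by $1+\tfrac1q=\tfrac1r+\tfrac1p$, equivalently $\tfrac1r=1-(\tfrac1p-\tfrac1q)$; since $p\leq q$ one has $0\leq \tfrac1p-\tfrac1q\leq 1$, so $r$ is admissible, and the endpoint cases $p=q$ (giving $r=1$) and $p=1$, $q=\infty$ (giving $r=\infty$) are included. Because restricting the $L^q$ norm from $\RN$ to $\Omega$ only decreases it, Young's inequality on $\RN$ yields
\[
\norm{S^\theta(t)u_0}_{L^q(\Omega)}\leq \frac{C}{t^{N/2}}\,\norm{g_t\ast\abs{u_0}}_{L^q(\RN)}\leq \frac{C}{t^{N/2}}\,\norm{g_t}_{L^r(\RN)}\,\norm{u_0}_{L^p(\Omega)}.
\]

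Finally I would compute the Gaussian factor by the usual scaling, $\norm{g_t}_{L^r(\RN)}=c\,t^{N/(2r)}$ for $r<\infty$ (and $\norm{g_t}_{L^\infty(\RN)}=1$, which matches the limit $r=\infty$). Multiplying the two powers of $t$ gives $t^{-N/2+N/(2r)}=t^{-\frac N2(1-\frac1r)}=t^{-\frac N2(\frac1p-\frac1q)}$, which is exactly the claimed decay rate, and the estimate follows. I do not expect any genuine obstacle here: the only two points requiring a little care are the zero-extension of $u_0$, which legitimises replacing the integral over $\Omega$ by one over $\RN$ and hence the passage to a convolution, and the verification that the exponent $r$ stays in $[1,\infty]$ so that Young's inequality applies across the full range $1\leq p\leq q\leq\infty$.
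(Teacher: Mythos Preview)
Your proof is correct and follows exactly the approach the paper indicates: the paper simply remarks that the Gaussian upper bound \eqref{eqn:gyryabound} together with Young's inequality for convolutions yields the estimate, and you have filled in precisely those details (zero extension to $\RN$, choice of the Young exponent $r$, and the scaling computation for $\norm{g_t}_{L^r(\RN)}$).
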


Concerning regularity of solutions and the kernels, we  can state the
following result. 

\begin{theorem}
	\label{thm:prop2}
The semigroup $S^\theta(t)$ has the following properties. 
	\begin{enumerate}
        \item
For $u_0\in L^p(\Omega)$, with $1\leq p\leq \infty$, 
    $u (x,t)= S^\theta(t)u_0(x)$ is a
    $C^{\infty}(\Omega\times(0,\infty))\cap C^{1}(\adh{\Omega}\times(0,\infty))$ solution of the heat
    equation, that is
    \begin{displaymath}
      \left\{
        \begin{aligned}
          u_t(x,t)-\Lap u(x,t) & = 0 &&
          \forall(x,t)\in\Omega\times(0,\infty)
          \\
          B_\theta(u)(x,t) & =0 && \forall x\in \partial \Omega, \
          \forall t>0 .
        \end{aligned}
      \right. 
    \end{displaymath}

  \item
    The integral kernel is analytic in time.
Furthermore, $k^\theta(\cdot,y, \cdot\cdot)$ belongs to
$C^{\infty}(\Omega\times(0,\infty))\cap C^{1}(\adh{\Omega}\times(0,\infty))$ and
satisfies the heat equation for any fixed $y$.

	\end{enumerate}
\end{theorem}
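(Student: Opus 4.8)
The plan is to exploit the analyticity of the semigroup for $1<p<\infty$, the smoothing estimates of Corollary \ref{cor:LpLq_estimates}, and the sectorial $L^\infty$ framework, deducing interior smoothness from hypoellipticity and up-to-the-boundary regularity from the domain characterisations, and then to transfer everything to the kernel via the reproduction identity. First I would reduce to the case $u_0\in L^q(\Omega)$ with $1<q<\infty$, where the generator $\Lap_\theta$ is sectorial. Indeed, given $u_0\in L^p(\Omega)$ with $p=1$ or $p=\infty$ and any $t>0$, choose $\eps\in(0,t)$; by Corollary \ref{cor:LpLq_estimates} one has $S^\theta(\eps)u_0\in L^q(\Omega)$ for every $q\in(1,\infty)$, and the semigroup property $S^\theta(t)=S^\theta(t-\eps)S^\theta(\eps)$ shows it suffices to treat $L^q$ data. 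Since $\eps$ is arbitrarily small and $t$ arbitrary, this covers all of $\Omega\times(0,\infty)$. For $1<q<\infty$ the orbit $v(t)\defeq S^\theta(t)u_0$ is a strong solution of $v'=\Lap_\theta v$, so $\partial_t v=\Lap v$ in $L^q(\Omega)$ for each $t>0$; in particular $u$ is a distributional solution of the heat equation in $\Omega\times(0,\infty)$. The heat operator being hypoelliptic, every distributional solution is $C^\infty$ in the interior, which gives $u\in C^\infty(\Omega\times(0,\infty))$.

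The remaining, and genuinely substantial, point is the regularity up to $\partial\Omega$ and the classical validity of the boundary condition. Here I would fix $t>0$ and set $w\defeq S^\theta(t/2)u_0$, which belongs to $L^\infty(\Omega)$ by Corollary \ref{cor:LpLq_estimates}. Since the generator of the semigroup on $L^\infty(\Omega)$ is sectorial with domain \eqref{eq:domain_Linfty}, we get $v(t)=S^\theta(t/2)w\in D^\infty(\Lap_\theta)$ for each $t>0$, so that $\Lap v(t)=\partial_t v(t)\in L^\infty(\Omega)$ and $B_\theta(v(t))=0$, with $t\mapsto v(t)$ smooth into the graph norm of $\Lap_\theta$ on $(0,\infty)$. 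By the Sobolev embedding $D^\infty(\Lap_\theta)\subset C^{1+\alpha}(\adh{\Omega})$ recalled after \eqref{eq:domain_Linfty}, it follows that $v(t)\in C^{1+\alpha}(\adh{\Omega})$ with $B_\theta(v(t))=0$ holding pointwise, and that $t\mapsto v(t)$ is continuous into $C^{1+\alpha}(\adh{\Omega})$; together with the interior smoothness this yields $u\in C^1(\adh{\Omega}\times(0,\infty))$. The finite smoothness of $\partial\Omega$ (only $C^{2,\alpha}$) and of the coefficients of $B_\theta$ (only $C^{1,\alpha}$) is precisely what prevents upgrading $C^1$ to $C^\infty$ up to the boundary, in contrast with the interior.

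For the kernel I would use the Gaussian bound \eqref{eqn:gyryabound}, which gives $k^\theta(\cdot,y,s)\in L^q(\Omega)$ for every $1<q<\infty$ and each fixed $y\in\Omega$, $s>0$, together with the reproduction identity following from \eqref{eqn:ackrnpre} and the semigroup property, namely $k^\theta(x,y,t+s)=\int_\Omega k^\theta(x,z,t)\,k^\theta(z,y,s)\,dz=\bigl(S^\theta(t)\,k^\theta(\cdot,y,s)\bigr)(x)$. Thus, for fixed $y$ and $s>0$, the map $(x,t)\mapsto k^\theta(x,y,t+s)$ is the orbit of an $L^q$ datum under the semigroup, so part (i) applies and gives $k^\theta(\cdot,y,\cdot\cdot)\in C^\infty(\Omega\times(0,\infty))\cap C^1(\adh{\Omega}\times(0,\infty))$, solving the heat equation with $B_\theta=0$ on $\partial\Omega$; since $s\in(0,t)$ and $t>0$ are arbitrary this covers all times. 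Finally, time analyticity is inherited directly from the analytic semigroup: for $1<q<\infty$ the map $\tau\mapsto S^\theta(\tau)w$ extends holomorphically to a sector for every $w\in L^q(\Omega)$, so writing $k^\theta(x,y,t)=\bigl(S^\theta(t-s)\,k^\theta(\cdot,y,s)\bigr)(x)$ shows $t\mapsto k^\theta(x,y,t)$ is analytic on $(s,\infty)$ for each small $s$, hence on $(0,\infty)$.

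I expect the boundary step to be the only real obstacle: the interior $C^\infty$ smoothness is immediate from hypoellipticity, the time analyticity and the kernel statement are formal consequences of the analytic semigroup and the reproduction identity, but making the orbit a classical solution \emph{up to} $\partial\Omega$ with the mixed/Robin condition $B_\theta$ satisfied pointwise requires the $L^\infty$ sectorial theory and the embedding $D^\infty(\Lap_\theta)\subset C^{1+\alpha}(\adh{\Omega})$, together with care that $\theta$ may switch between Dirichlet and Robin type on different connected components of the $C^{2,\alpha}$ boundary.
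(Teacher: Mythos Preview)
Your proposal is correct and follows essentially the same route as the paper: reduce to $L^\infty$ data via the smoothing estimate, use the $L^\infty$ sectorial theory together with the embedding $D^\infty(\Lap_\theta)\subset C^{1+\alpha}(\adh{\Omega})$ for regularity up to the boundary, and then transfer everything to the kernel via the reproduction identity $k^\theta(\cdot,y,t)=S^\theta(t/2)k^\theta(\cdot,y,t/2)$. The only cosmetic difference is that you invoke hypoellipticity of the heat operator for the interior $C^\infty$ smoothness, whereas the paper bootstraps via parabolic Schauder estimates after observing that each $\partial_t^k S^\theta(t)u_0=(-\Lap)^k S^\theta(t)u_0$ is analytic in $C^{1,\alpha}_{loc}(\adh{\Omega})$.
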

\begin{proof}
  For simplicity, we drop the superscript $\theta$ along this proof.\\
(i)  From Corollary \ref{cor:LpLq_estimates} we have that $S(t): \
  L^p(\Omega) \longrightarrow L^\infty(\Omega)$ is continuous for
  $t>0$. Since $S(t)=S(t/2)S(t/2)$, to study the
    regularity when $u_0\in L^p(\Omega)$ it is enough to study the
    regularity when $u_0\in L^\infty(\Omega)$.

Now, from the analyticity of the semigroup in $L^{\infty}(\Omega)$
and  the results of \cite{lunardi}, see \eqref{eq:domain_Linfty}, we have that $S(t):
L^\infty(\Omega) \longrightarrow D^\infty(-\Lap_\theta)$ is
continuous. In particular, by the Sobolev embeddings we have from
(\ref{eq:domain_Linfty})  that
$S(t): L^\infty(\Omega) \longrightarrow C^{1,\alpha}_{loc}(\adh{\Omega})$
for any $0<\alpha<1$. Then, using Lemma 3.1 of \cite{ACDRB04_linear}  we obtain
that, for any $u_0\in L^\infty(\Omega)$, $t\mapsto S(t)u_0$ is
analytic in $C^{1,\alpha}_{loc}(\adh{\Omega})$, so, in particular, $u: (x,t) \mapsto S(t)u_0(x)$
belongs to $C^{1}(\adh{\Omega}\times(0,\infty))$. 

In the same way,  since for all $k\in \N$, $t\mapsto \partial^{k}_{t}S(t)u_0=
-(-\Delta)^{k} S(t)u_{0}$ is analytic in
$C^{1,\alpha}_{loc}(\adh{\Omega})$ then we get  $u\in C^\infty(\Omega\times(0,\infty))$ using classical parabolic Schauder regularity (See for example \cite{librosouplet} Theorem 48.2).

\medskip 	
\noindent (ii)
Now, given $\varphi\in C^\infty_c(\Omega)$,  we use that
$S(t)\varphi=S(t/2)(S(t/2)\varphi)$. Then, for every $x\in \Omega$ and
$t>0$, 
	\begin{equation}
		\int_\Omega k(x,y,t)\varphi(y)dy=\int_\Omega k (x,z,t/2)\int_\Omega k(z,y,t/2)\varphi(y)dydz=\int_\Omega \left(\int_\Omega k (x,z,t/2)k(z,y,t/2) dz\right)\varphi(y)dy
	\end{equation}

As this is true for any $\varphi\in C^\infty_c(\Omega)$, we have
$k(x,y,t)=S(t/2)k(\cdot,y,t/2) (x)$, so from part (i), as $k(\cdot,y,t/2)$ is in $L^\infty(\Omega)$ we obtain $k(\cdot,y,\cdot \cdot)\in
C^{1}(\adh{\Omega}\times(0,\infty))\cap C^\infty(\Omega\times(0,\infty))$.
\end{proof}

Now we give estimates on the derivatives of the kernel and of the
solutions  that will be
needed further below. We start with estimates of time derivatives of
the kernel.

\begin{proposition}
Let $k^{\theta}(x,y,t)$ be the heat kernel in $\Omega$ with homogeneous
$\theta-$boundary conditions on $\partial \Omega$. Then 
	\begin{equation}
		\label{eqn:compnucleos}
		\abs{\frac{\partial^n}{\partial t^n} k^{\theta}(x,y,t)}\leq  C_1\frac{e^{-\frac{\abs{x-y}^2}{C_2t}}}{t^{\frac{N}{2}+n}} \qquad \forall x,y\in \Omega, \quad \forall t>0,
	\end{equation}
for some constants $C_1>0$ and $C_2>0$ which depend on $\Omega$, $\theta$ and $n$.
\end{proposition}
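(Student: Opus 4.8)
The plan is to recover the time derivatives of the kernel from the kernel itself by means of Cauchy's integral formula in a \emph{complex} time variable, exploiting the analyticity established in Theorem \ref{thm:prop2} together with the Gaussian upper bound \eqref{eqn:gyryabound}. First I would extend $t\mapsto k^\theta(x,y,t)$ to a holomorphic function $z\mapsto k^\theta(x,y,z)$ on a sector $\Sigma_\omega=\{z\neq 0:\ \abs{\arg z}<\omega\}$. Since $-\Delta_\theta$ is nonnegative and self-adjoint in $L^2(\Omega)$ (the symmetry $k^\theta(x,y,t)=k^\theta(y,x,t)$), the semigroup $S^\theta$ is analytic in the right half-plane, so $\omega$ can be taken close to $\pi/2$. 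Concretely, for fixed small $t_0>0$ one has $k^\theta(x,y,t_0+z)=\big(S^\theta(z)\,k^\theta(\cdot,y,t_0)\big)(x)$, whose right-hand side is holomorphic in $z$ on $\Sigma_\omega$ because $k^\theta(\cdot,y,t_0)\in L^p(\Omega)$ and $S^\theta$ is analytic; letting $t_0\downarrow 0$ and invoking part (i) of Theorem \ref{thm:prop2} propagates the analyticity to the kernel on all of $(0,\infty)$ extended to $\Sigma_\omega$, jointly with the spatial regularity.

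The key and most delicate step is to show that this holomorphic extension still obeys a Gaussian bound on the sector: for every $0<\omega'<\omega$ there are constants $C,c>0$ with
\begin{equation}
\abs{k^\theta(x,y,z)}\leq C\,\frac{e^{-\frac{\abs{x-y}^2\,\mathrm{Re}(1/z)}{c}}}{\abs{z}^{N/2}},\qquad \abs{\arg z}\leq \omega' .
\end{equation}
On the real axis $z=t$ this reduces exactly to \eqref{eqn:gyryabound}, since $\mathrm{Re}(1/t)=1/t$. Here I would use the self-adjointness of $S^\theta(t)$, which via the spectral theorem lets one pass from the real bound \eqref{eqn:gyryabound} to the complex one by a Phragmén--Lindelöf / three-lines argument applied to the analytic family $z\mapsto e^{\eta\abs{x-y}^2\,\mathrm{Re}(1/z)}k^\theta(x,y,z)$; this is the content of Davies' complex-time Gaussian estimates for self-adjoint semigroups. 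This passage from real to complex Gaussian bounds is the main obstacle and the only nontrivial ingredient; everything else is routine.

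Granting the complex bound, the conclusion is immediate. Fix $t>0$ and integrate along the circle $\gamma=\{z:\ \abs{z-t}=t/2\}\subset\Sigma_{\omega'}$, on which $\mathrm{Re}\,z\geq t/2$ and $\abs{z}\leq 3t/2$, so that $\mathrm{Re}(1/z)=\mathrm{Re}(z)/\abs{z}^2\geq 2/(9t)$ and $\abs{z}^{-N/2}\leq C t^{-N/2}$. By Cauchy's formula,
\begin{equation}
\frac{\partial^n}{\partial t^n}k^\theta(x,y,t)=\frac{n!}{2\pi i}\oint_{\gamma}\frac{k^\theta(x,y,z)}{(z-t)^{n+1}}\,dz ,
\end{equation}
and estimating the integrand by the complex Gaussian bound, using $\abs{z-t}=t/2$ and $\mathrm{length}(\gamma)=\pi t$, gives
\begin{equation}
\abs{\frac{\partial^n}{\partial t^n}k^\theta(x,y,t)}\leq \frac{n!}{2\pi}\,\frac{\pi t}{(t/2)^{n+1}}\,\sup_{z\in\gamma}\abs{k^\theta(x,y,z)}\leq C_1\,\frac{e^{-\frac{\abs{x-y}^2}{C_2 t}}}{t^{\frac{N}{2}+n}},
\end{equation}
which is precisely \eqref{eqn:compnucleos}, with constants depending on $N$, $\Omega$, $\theta$ and $n$.

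As a fallback avoiding complex time altogether, the estimate can be bootstrapped from the single case $n=1$. Writing $\partial_t^{\,n}S^\theta(t)=\big(\Delta_\theta\,S^\theta(t/n)\big)^n$ by the semigroup property and analyticity, the kernel of $\partial_t^{\,n}$ becomes an $n$-fold spatial convolution of the kernel $\partial_t k^\theta(\cdot,\cdot,t/n)$ of $\Delta_\theta S^\theta(t/n)$; a routine Gaussian-convolution lemma, each factor contributing an extra power $t^{-1}$, then yields \eqref{eqn:compnucleos} from the lone bound $\abs{\partial_t k^\theta(x,y,t)}\leq C\,t^{-\frac{N}{2}-1}e^{-\abs{x-y}^2/(Ct)}$. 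In this route the difficulty is concentrated in that $n=1$ estimate, which is again most cleanly obtained from the complex-time bound above.
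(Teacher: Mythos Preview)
Your argument is correct and essentially complete: the analytic extension to a sector (from self-adjointness), the complex-time Gaussian bound, and the Cauchy-formula step are all standard and the arithmetic on the circle $\abs{z-t}=t/2$ is fine (it sits in $\Sigma_{\pi/6}$, well inside the half-plane of analyticity, and your lower bound $\mathrm{Re}(1/z)\geq 2/(9t)$ is accurate).

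However, your route differs from the paper's. The paper does not set up the complex-time machinery itself; instead it simply checks the three hypotheses
\[
k^\theta(x,x,t/2)\leq a,\qquad k^\theta(y,y,t/2)\leq b,\qquad k^\theta(x,y,s)\leq a^{1/2}b^{1/2}c\ \ \text{for }s\in(t/2,3t/2),
\]
with $a=b=Ct^{-N/2}$ and $c=e^{-\abs{x-y}^2/(Ct)}$ taken directly from \eqref{eqn:gyryabound}, and then invokes Theorem~4 of Davies~\cite{davies1997non} as a black box to conclude
\[
\Big|\partial_t^{\,n}k^\theta(x,y,t)\Big|\leq C\,\frac{a^{1/2}b^{1/2}c^{3/4}}{t^n}.
\]
So the paper's proof is a three-line citation, while yours unpacks the mechanism (complex Gaussian bound plus Cauchy's formula) that underlies that very theorem of Davies. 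The trade-off is clear: the paper's version is shorter and avoids any discussion of sectors or Phragm\'en--Lindel\"of, at the price of importing a ready-made result; your version is more self-contained and explanatory, but the ``key and most delicate step'' you flag---passing from the real Gaussian bound \eqref{eqn:gyryabound} to its complex-time analogue---is itself a theorem of comparable depth to the one the paper quotes. In short, both proofs ultimately lean on Davies' work; the paper calls the finished product, you rebuild it.
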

\begin{proof}
  From \eqref{eqn:gyryabound} we have
 \begin{equation}
      k^{\theta}(x,x,t/2)\leq a, \ \ k^{\theta}(y,y,t/2)\leq b, \ \ k^{\theta}(x,y,s)\leq a^{1/2}b^{1/2}c
    \end{equation}
    for all $s\in(t/2,3t/2)$, with  $a=b=\frac{C}{t^{N/2}}$, $c= e^{-\frac{\abs{x-y}^2}{Ct}}$.
Therefore, \cite{davies1997non}    Theorem 4,   implies 
	\begin{equation}
		\abs{\frac{\partial^n}{\partial t^n}k^{\theta}(x,y,t)}\leq  C\frac{a^{1/2}b^{1/2}c^{3/4}}{t^n}=C_1\frac{e^{-\frac{\abs{x-y}^2}{C_2t}}}{t^{N/2+n}}.
	\end{equation}
\end{proof}

Now we give estimates for solutions.

\begin{theorem}
\label{thm:est}

Let $u(x,t)=S^\theta(t)u_0(x)$ be the solution for the heat equation
with homogeneous $\theta-$boundary conditions, and initial datum
$u_0\in L^p(\Omega)$ with $1\leq p\leq \infty$. Then, if we denote
$d_x=d(x,\partial\Omega)$, we have 
	\begin{equation}
		\label{eqn:esteq1}
		\abs{\frac{\partial^{k+\abs{\beta}}}{\partial t^{k}\partial x^\beta}u(x,t)}\leq \frac{C_{\beta,k} \norm{u_0}_{L^p(\Omega)}}{t^{\frac{N}{2p}+k}\min (t^{1/2},d_x)^{\abs{\beta}}} \qquad \forall t>0, \ \ \forall x\in\Omega,
	\end{equation}
for any multi-index $\beta$ and non-negative integer $k$, where $C_{\beta,k}$ depends on $\beta$, $k$ and the domain $\Omega$.
	
\end{theorem}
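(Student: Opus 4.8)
The plan is to treat the time derivatives and the spatial derivatives by two different mechanisms, because the estimate asks for a pure $t^{-k}$ rate in time (with no dependence on $d_x$) but allows a genuine boundary blow-up $\min(t^{1/2},d_x)^{-|\beta|}$ in space. The crucial observation is that $w\defeq \partial_t^k u=\Lap^k u$ is again a solution of the heat equation satisfying the same homogeneous $\theta$-boundary conditions, so that once the correct $t$-rate for $w$ is established, the spatial derivatives of $u=\partial_x^\beta$-part can be recovered from \emph{purely interior} parabolic regularity applied to $w$, the distance $d_x$ entering only through the size of the interior cylinder one is allowed to use.

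First I would bound the time derivatives. Differentiating \eqref{eqn:ackrnpre} under the integral sign (justified by \eqref{eqn:compnucleos}) gives $\partial_t^k u(x,t)=\int_\Omega \partial_t^k k^\theta(x,y,t)\,u_0(y)\,dy$, and \eqref{eqn:compnucleos} together with Hölder's inequality with conjugate exponent $p'$ yields
\begin{equation}
|\partial_t^k u(x,t)|\leq \frac{C}{t^{\frac{N}{2}+k}}\Big(\int_\Omega e^{-p'\frac{\abs{x-y}^2}{C_2 t}}\,dy\Big)^{1/p'}\norm{u_0}_{L^p(\Omega)}\leq \frac{C}{t^{\frac{N}{2p}+k}}\norm{u_0}_{L^p(\Omega)},
\end{equation}
which is already the claim for $\beta=0$ and simultaneously the uniform bound $\norm{w(\cdot,t)}_{L^\infty(\Omega)}\leq C\,t^{-\frac{N}{2p}-k}\norm{u_0}_{L^p(\Omega)}$ for $w=\partial_t^k u$.

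Next I would record that $w$ solves the heat equation in $\Omega\times(0,\infty)$ with the same boundary conditions: by Theorem \ref{thm:prop2} the solution is $C^{1}(\adh{\Omega}\times(0,\infty))\cap C^\infty(\Omega\times(0,\infty))$ and analytic in time, hence $B_\theta(\partial_t^j u)=\partial_t^j B_\theta(u)=0$ on $\partial\Omega$ for every $j$, so in particular $B_\theta(w)=0$. I would then fix $(x_0,t_0)$ and set $r\defeq \tfrac12\min(t_0^{1/2},d_{x_0})$. Since $r\leq \tfrac12 d_{x_0}$, the ball $B(x_0,r)$ stays inside $\Omega$ at distance $\geq d_{x_0}/2$ from $\partial\Omega$, and since $r^2\leq t_0/4$ the cylinder $B(x_0,r)\times(t_0-r^2,t_0)$ is contained in $\Omega\times(t_0/2,t_0)$; thus the boundary never enters and interior estimates suffice throughout. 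Rescaling $v(z,s)\defeq w(x_0+rz,\,t_0+r^2 s)$ gives a solution of the heat equation on the unit cylinder $B(0,1)\times(-1,0)$ with $\sup|v|\leq C\,t_0^{-\frac{N}{2p}-k}\norm{u_0}_{L^p(\Omega)}$ by the previous step. Interior parabolic derivative estimates (e.g. \cite{librosouplet} Theorem 48.2) give $|\partial_z^\beta v(0,0)|\leq C_\beta \sup|v|$, and undoing the scaling via $\partial_z^\beta v(0,0)=r^{\abs{\beta}}\partial_x^\beta w(x_0,t_0)$ together with $\partial_x^\beta w=\partial_t^k\partial_x^\beta u$ (mixed partials commute by smoothness) produces the stated bound, since $r^{-\abs{\beta}}=C\min(t_0^{1/2},d_{x_0})^{-\abs{\beta}}$.

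The main obstacle is the bookkeeping at the reduction step: one must verify that $w=\partial_t^k u$ is a genuine solution taking the homogeneous $\theta$-boundary conditions pointwise, which is exactly why the $C^1$ regularity up to the boundary and the time-analyticity from Theorem \ref{thm:prop2} are indispensable. Everything else hinges on the choice $r=\tfrac12\min(t_0^{1/2},d_{x_0})$, engineered so that the rescaled cylinder simultaneously avoids $\partial\Omega$ and lies inside $(t_0/2,t_0)$, thereby converting the near-boundary case into an interior one at the cost of the factor $d_{x_0}^{-\abs{\beta}}$.
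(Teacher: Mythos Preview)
Your proof is correct and follows essentially the same strategy as the paper: first establish the $\beta=0$ case from the kernel time-derivative bound \eqref{eqn:compnucleos}, then extract spatial derivatives via interior parabolic estimates on a cylinder of parabolic radius $\sim\min(t_0^{1/2},d_{x_0})$. The only cosmetic difference is that the paper inducts on $|\beta|$, gaining one spatial derivative at a time from the first-order Schauder estimate (Theorem \ref{thm:schauderest}), whereas you rescale to the unit cylinder and apply higher-order interior estimates all at once; your aside verifying that $w=\partial_t^k u$ satisfies $B_\theta(w)=0$ is harmless but unnecessary, since the cylinder you use never touches $\partial\Omega$.
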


\begin{proof}
It is enough to prove the result when $u_0\geq 0$. Otherwise
the positive and negative part of $u_0$ and apply the Theorem to each
part.  We proceed by   induction in $\abs{\beta}$.
	
For $|\beta|=0$,  recall that 
	\begin{equation}
		\frac{\partial^n}{\partial t^n}u(x,t) = \int_\Omega
                \frac{\partial^{n} k^{\theta}}{\partial
                  t^n}(x,y,t)u_0(y)dy 
	\end{equation}
        and, using \eqref{eqn:compnucleos},
	\begin{equation}
		\label{eqn:acotderu1}
		\abs{\frac{\partial^n}{\partial t^n}u(x,t)}\leq \int_\Omega \abs{\frac{\partial^n k^{\theta}}{\partial t^n}(x,y,t)}u_0(y)dy\leq \frac{C}{t^n}\int_\Omega \frac{e^{-\frac{\abs{x-y}^2}{4ct}}u_0(y)}{t^{N/2}}dy\leq C\frac{\norm{u_0}_{L^p(\Omega)}}{t^{\frac{N}{2p}+n}},
	\end{equation}	
where we have used Young's convolution inequality in the last
inequality. This proves the theorem with $\abs{\beta}=0$.

Now by induction in $\abs{\beta}$, assume 
\begin{equation}
\label{eqn:estindhyp}
\abs{\frac{\partial^{k+\abs{\beta}}}{\partial t^{k}\partial
    x^\beta}u(x,t)}\leq \frac{C_{\beta,k}
  \norm{u_0}_{L^p(\Omega)}}{t^{\frac{N}{2p}+k}\min
  (t^{1/2},d_x)^{\abs{\beta}}} \qquad \forall t>0, \ \ \forall
x\in\Omega, 
\end{equation}
for some multi-index $\beta$. Now, we use that
$v=\frac{\partial^{k+\abs{\beta}}}{\partial t^{k}\partial x^\beta}u$
is a solution of the heat equation in $\Omega$. Hence, given
$(x_0,t_0)\in \Omega\times \R_+$, we choose $R=\min
(t_0^{1/2},d_{x_0})$ and apply Schauder estimates in Theorem
\ref{thm:schauderest} with $Q=B(x_0,R/2)\times[t_0-R^2/2,t_0]$, so
that 
	\begin{equation}
		\label{eqn:thmesteqn1}
		\frac{R}{2}\abs{\partial_{x_i}v(x_0,t_0)}\leq C\norm{v}_{L^\infty(Q)}.
	\end{equation}
Using the induction hypothesis \eqref{eqn:estindhyp}, jointly with the
fact that if $(x,t)\in Q$ we have $t\geq t_0-R^2/2\geq t_0/2$ and
$d_x\geq d_{x_0}-d(x,x_0)\geq d_{x_0}-R/2\geq d_{x_0}/2$, we obtain 
	\begin{equation}
		\label{eqn:thmesteqn2}
		\norm{v}_{L^\infty(Q)}\leq \frac{C
                  \norm{u_0}_{L^p(\Omega)}}{t_0^{\frac{N}{2p}+k} \min (t_0^{1/2},d_{x_0})^{\abs{\beta}}}.
	\end{equation}
	Therefore, combining \eqref{eqn:thmesteqn1} and \eqref{eqn:thmesteqn2} we obtain
\begin{equation}
\label{eqn:estindhyp2}
\abs{\frac{\partial}{\partial
    x_i}\frac{\partial^{k+\abs{\beta}}}{\partial t^{k}\partial
    x^\beta} u(x_0,t_0)}\leq \frac{C
  \norm{u_0}_{L^p(\Omega)}}{t_0^{\frac{N}{2p}+k} \min  (t_0^{1/2},d_{x_0})^{\abs{\beta}+1}}, 
\end{equation}
which is the desired result for an additional spatial
derivative. Hence, by induction we obtain the general result. 

\end{proof}

\begin{remark}
 The previous estimates are not always sharp. A good
  discussion of the decay rates of the derivatives of the heat kernel
  can be found in \cite{ishige2007} and the references therein.
\end{remark}

Finally we present some  estimates on the  decay rates for
the derivatives of the asymptotic profile, $\Phi^\theta$, defined as
in \eqref{eq:asymptotic_profile_parabolic},
\eqref{eq:asymptotic_profile_elliptic}. Recall that if $\theta\equiv 1$ then
$\Phi^{1}\equiv 1$ and if $N=2$ and $\theta\not\equiv 1$, that is except
for Neumann boundary conditions, then $\Phi^{\theta}\equiv 0$. 

\begin{proposition}
	\label{prop:estpro}

Let $N\geq 3$  and $\Phi^\theta$ its asymptotic profile for $\theta-$boundary conditions. Then, there exists $C>0$ such that
\begin{equation}
		\label{eqn:boundphi}
		1-\frac{C}{\abs{x}^{N-2}}\leq \Phi^{\theta}(x) \leq 1 \qquad \forall x\in \Omega.
	\end{equation}	
	In addition, for any multi-index $\abs{\beta}\neq 0$, if $\Phi^\theta \in C^{\abs{\beta}}(\adh{\Omega})$ (which is true if $\partial\Omega$ and $\theta$ are sufficiently regular), there exists $C_\beta>0$ such that
	\begin{equation}
		\label{eqn:boundpsi}
		\abs{D^\beta\Phi^\theta(x)}\leq \frac{C_{\beta}}{\abs{x}^{N-2+\abs{\beta}}} \qquad x\in \Omega.
	\end{equation}
\end{proposition}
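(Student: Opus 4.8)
The plan is to deduce the derivative bound \eqref{eqn:boundpsi} from classical interior estimates for harmonic functions applied to $w \defeq 1 - \Phi^\theta$, exploiting the pointwise decay already recorded in \eqref{eqn:boundphi}. The inequality \eqref{eqn:boundphi} itself is exactly \eqref{eqn:estasymptoticprofile}: the upper bound $\Phi^\theta \le 1$ follows from the monotone construction \eqref{eq:asymptotic_profile_parabolic} together with the maximum principle (since $1_\Omega \le 1$), while the lower bound is proved in \cite{DdTRB23} by comparison with the harmonic barrier $1 - C|x|^{-(N-2)}$. Granting it, observe that $w$ is harmonic in $\Omega$ (being a difference of harmonic functions), that $0 \le w(x) \le C|x|^{-(N-2)}$, and that $D^\beta \Phi^\theta = -D^\beta w$ whenever $|\beta| \ge 1$; hence it suffices to bound $D^\beta w$.

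First I would treat the far region. Fix $R_0 > 0$ with $\hole \subset B(0,R_0)$, so that $\{|y| \ge R_0\} \subset \Omega$. For $|x| \ge 2R_0$ the closed ball $\overline{B(x,|x|/2)}$ is contained in $\Omega$, since every $y$ in it satisfies $|y| \ge |x|/2 \ge R_0$, and on this ball $|w| \le C(|x|/2)^{-(N-2)}$ by \eqref{eqn:boundphi}. Rescaling $B(x,|x|/2)$ to the unit ball and invoking the standard interior derivative estimate for harmonic functions gives $|D^\beta w(x)| \le C_\beta (|x|/2)^{-|\beta|}\sup_{B(x,|x|/2)}|w|$. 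Combining the two bounds yields $|D^\beta \Phi^\theta(x)| = |D^\beta w(x)| \le C_\beta' |x|^{-(N-2+|\beta|)}$ for all $|x| \ge 2R_0$.

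It remains to treat the bounded set $K \defeq \overline{\Omega} \cap \{|x| \le 2R_0\}$, and here I would use the standing hypothesis $\Phi^\theta \in C^{|\beta|}(\overline{\Omega})$, which makes $D^\beta \Phi^\theta$ continuous and hence bounded, say by $M$, on the compact set $K$. Since $0 \in \mathring{\hole}$ we have $|x| \ge \mathrm{dist}(0,\partial\Omega) > 0$ on $K$, so $|x|^{-(N-2+|\beta|)}$ is bounded below by a positive constant there; after enlarging the constant this gives $|D^\beta \Phi^\theta(x)| \le M \le C_\beta'' |x|^{-(N-2+|\beta|)}$ on $K$. Taking $C_\beta$ to be the larger of the constants from the two regions establishes \eqref{eqn:boundpsi} throughout $\Omega$.

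I do not anticipate a genuine obstacle. The two points demanding care are checking that $\overline{B(x,|x|/2)} \subset \Omega$ for $|x|$ large — which is precisely where the compactness of the hole enters — and tracking the power of the radius in the interior estimate, so that the gain $|x|^{-|\beta|}$ multiplies the decay $|x|^{-(N-2)}$ of $w$ to produce the claimed exponent $N-2+|\beta|$. The regularity hypothesis $\Phi^\theta \in C^{|\beta|}(\overline{\Omega})$ is needed only to control $D^\beta \Phi^\theta$ up to the boundary on the bounded region $K$; in the far region the harmonicity of $w$ already supplies all the smoothness used.
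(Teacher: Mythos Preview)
Your proposal is correct and follows essentially the same route as the paper: introduce $w=1-\Phi^\theta$, use the regularity hypothesis to handle the bounded set near $\partial\Omega$, and in the far region apply interior estimates for harmonic functions on balls $B(x,|x|/2)\subset\Omega$ together with the decay $|w|\le C|x|^{-(N-2)}$. The only cosmetic difference is that the paper proceeds by induction on $|\beta|$, applying a first-order gradient estimate (its Theorem~\ref{thm:4.6}) at each step to pick up one factor of $|x|^{-1}$ at a time, whereas you invoke the full higher-order interior estimate $|D^\beta w(x)|\le C_\beta R^{-|\beta|}\sup_{B(x,R)}|w|$ in a single stroke; both are standard and yield the same bound.
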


\begin{proof}
  The proof of \eqref{eqn:boundphi} can be found in
  \cite{DdTRB23} Proposition 4.8. To prove
  \eqref{eqn:boundpsi} for $\beta \neq 0$, we will prove
  \eqref{eqn:boundpsi} for $\psi=1-\Phi^\theta$. By hypothesis we have
  that $\psi$ is $C^{\abs{\beta}}(\adh{\Omega})$. Therefore, we only
  have to prove  \eqref{eqn:boundpsi} for $\abs{x}$ large and
  for this 
  we will use induction in $\beta$. For $\beta=0$, from \eqref{eqn:boundphi}, we
  already  have
  $\abs{\psi}\leq \frac{C}{\abs{x}^{N-2}}$. Now, assume the result is
  true for any multi-index $\beta$. We have that
  $v\defeq D^{\beta} \psi$ is
  a harmonic function in $\Omega$. Let $R>0$ such that
  $\partial \Omega\subset B(0,R)$. Given $\abs{x}\geq 2R$, with
  $B_1=B(x,\abs{x}/4)$ and $B_2=B(x,\abs{x}/2)\subset \Omega$ we use
  the classical Schauder estimates of Theorem \ref{thm:4.6} to obtain
	\begin{equation}
		\frac{\abs{x}}{4}\abs{\partial_{x_i}v(x)}\leq C \norm{v}_{L^\infty(B_2)}\leq \frac{C}{\left(\frac{\abs{x}}{2}\right)^{N-2+\abs{\beta}}},
	\end{equation}
	where $C>0$ is independent of $x$. Then,
	\begin{equation}
		\label{eqn:propesteq2}
		\abs{\frac{\partial}{\partial x_i} D^{\beta} \psi (x)}\leq \frac{C}{\abs{x}^{N-2+\abs{\beta}+1}}.
	\end{equation}
	As this is true for any $x_i$, we have proved the induction hypothesis.
	
	The fact that $\Phi^\theta\in C^{\abs{\beta}}(\adh{\Omega})$ if $\partial \Omega$ and $\theta$ are sufficiently regular, derives from higher order regularity estimates up to the boundary for harmonic functions (see for example \cite{Mikhailov} Section IV.2).
\end{proof}


\section{Asymptotic behavior in the $L^{1}(\Omega)$ norm}
\label{sec:asympt-behav-L1norm}

As mentioned before, integrable data  is the most interesting case from  the physical and
probabilistic interpretation of the heat equation. As we already know the exact amount of mass lost
through the hole, see \eqref{eq:asymptotic_mass},  our goal is to describe how the remaining mass
distributes spatially  in $\Omega$ as $t\to \infty$.
As mentioned in the Introduction, for the case of $\Omega=\R^{N}$ this
problem has been addressed in, e.g.,  \cite{duoandikoetxea}, or the
  first chapter in the book \cite{giga} or in   \cite{vazquez2017asymptotic}.  All these results exploit the
  explicit form of the heat kernel in $\R^{N}$ and the  outcome is that
  the mass of the solutions distributes in $\R^{N}$ as a multiple of
  the Gaussian heat kernel in $\R^{N}$. In \cite{duoandikoetxea},  the authors describe fine
  asymptotics of the solution in terms of the momentums of the initial
  datum, by showing that the more momentums the initial data has, the more
  asymptotic terms can be determined in terms of the momentums and
  derivatives of the heat kernel.

Without any  assumptions on the momentums of the initial data, the result
in references  \cite{duoandikoetxea, giga, vazquez2017asymptotic} for the problem in
$\RN$ is the following one, that states that  the mass of the solution, $M$, distributes in space, in a
     first order approximation, as  by $M$ times the Gaussian.

\begin{theorem}
	\label{thm:jlvazquez}

	Let $u_0\in L^1(\mathbb{R}^N)$ and
        $M=\int_{\mathbb{R}^N}u_0(x)dx$. Then, if we denote
        $u(t)=S_{\mathbb{R}^N}(t)u_0$, the solution of the heat
        equation in $\R^{N}$ with initial data $u_{0}$, we have 
	\begin{equation}
		\label{eqn:jlvazquez1}
		\lim_{t\to\infty}\norm{u(t)-M
                  G(\cdot,t)}_{L^1(\mathbb{R}^N)} =  0  
	\end{equation}
where $G(x,t) =\frac{e^{-\frac{\abs{x}^2}{4t}}}{(4\pi
  t)^{\frac{N}{2}}}$ is the Gaussian.

     Furthermore,
\begin{equation}
	\label{eqn:jlvazquez2}
	\lim_{t\to\infty}t^{N/2}\norm{u(t)-M
           G(\cdot,t) }_{L^\infty(\mathbb{R}^N)} = 0  
\end{equation}
and for every $1< p <\infty$,
\begin{equation}
	\label{eqn:jlvazquez3}
	\lim_{t\to\infty}t^{\frac{N}{2}(1-\frac{1}{p})}\norm{u(t)-M
          G(\cdot,t) }_{L^p(\mathbb{R}^N)} = 0. 
\end{equation}
\end{theorem}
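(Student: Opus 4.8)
The plan is to exploit the fact that on all of $\R^N$ the solution is an explicit convolution with the Gaussian, $u(x,t) = S_{\R^N}(t)u_0(x) = \int_{\R^N} G(x-y,t)u_0(y)\,dy$, while $M\,G(x,t) = \int_{\R^N} G(x,t)u_0(y)\,dy$ because $M = \int_{\R^N} u_0$. Subtracting,
\begin{equation}
u(x,t) - M\,G(x,t) = \int_{\R^N}\bigl(G(x-y,t) - G(x,t)\bigr)u_0(y)\,dy,
\end{equation}
so the whole difference measures the failure of $G(\cdot,t)$ to be insensitive to translation by $y$. I would prove \eqref{eqn:jlvazquez1} and \eqref{eqn:jlvazquez2} separately and then obtain \eqref{eqn:jlvazquez3} by interpolation. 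Throughout I use the self-similar form $G(x,t) = t^{-N/2}G_1(x/\sqrt{t})$, where $G_1(z) = (4\pi)^{-N/2}e^{-\abs{z}^2/4}$.

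For the $L^1$ statement I first take absolute values inside the integral and use Fubini, giving
\begin{equation}
\norm{u(t) - M\,G(\cdot,t)}_{L^1(\R^N)} \leq \int_{\R^N}\norm{G(\cdot - y,t) - G(\cdot,t)}_{L^1(\R^N)}\abs{u_0(y)}\,dy .
\end{equation}
The change of variables $x = \sqrt{t}\,z$ together with the self-similar form turns the inner norm into $\norm{G_1(\cdot - y/\sqrt{t}) - G_1}_{L^1(\R^N)}$, which is bounded by $2\norm{G_1}_{L^1(\R^N)} = 2$ and, by continuity of translations in $L^1$, tends to $0$ as $t\to\infty$ for each fixed $y$ (since $y/\sqrt{t}\to 0$). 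Dominated convergence with dominating function $2\abs{u_0}$ then yields \eqref{eqn:jlvazquez1}.

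For the $L^\infty$ statement the same scaling gives $t^{N/2}\bigl(u(x,t)-M\,G(x,t)\bigr) = \int_{\R^N}\bigl(G_1((x-y)/\sqrt{t}) - G_1(x/\sqrt{t})\bigr)u_0(y)\,dy$. If $u_0$ had finite first moment, the mean value theorem would bound the integrand by $\norm{\nabla G_1}_{L^\infty}\abs{y}/\sqrt{t}$, giving $t^{N/2}\norm{u(t)-M\,G(\cdot,t)}_{L^\infty(\R^N)}\leq \norm{\nabla G_1}_{L^\infty}\,t^{-1/2}\int_{\R^N}\abs{y}\abs{u_0(y)}\,dy\to 0$. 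Since no moment is assumed, the main obstacle is removing this hypothesis, which I expect to be the delicate point and which I would handle by density: given $\eps>0$, split $u_0 = v + w$ with $v$ compactly supported and $\norm{w}_{L^1(\R^N)}<\eps$, and decompose $M = M_v + M_w$. The $v$-part decays by the moment argument above. For the $w$-part, the estimate $\abs{G(\cdot,t)*w}\leq \norm{G(\cdot,t)}_{L^\infty}\norm{w}_{L^1}$ together with $\norm{G(\cdot,t)}_{L^\infty(\R^N)} = (4\pi t)^{-N/2}$ yields $t^{N/2}\norm{S_{\R^N}(t)w}_{L^\infty}\leq (4\pi)^{-N/2}\norm{w}_{L^1}$ and $t^{N/2}\abs{M_w}\norm{G(\cdot,t)}_{L^\infty}\leq (4\pi)^{-N/2}\norm{w}_{L^1}$, both uniform in $t$. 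Hence $\limsup_{t\to\infty}t^{N/2}\norm{u(t)-M\,G(\cdot,t)}_{L^\infty(\R^N)}\leq 2(4\pi)^{-N/2}\eps$, and letting $\eps\to 0$ proves \eqref{eqn:jlvazquez2}.

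Finally, for $1<p<\infty$ I would interpolate via $\norm{f}_{L^p}\leq \norm{f}_{L^1}^{1/p}\norm{f}_{L^\infty}^{1-1/p}$ applied to $f = u(t)-M\,G(\cdot,t)$. Multiplying by $t^{\frac{N}{2}(1-\frac1p)} = (t^{N/2})^{1-\frac1p}$ gives
\begin{equation}
t^{\frac{N}{2}(1-\frac1p)}\norm{u(t)-M\,G(\cdot,t)}_{L^p(\R^N)}\leq \norm{u(t)-M\,G(\cdot,t)}_{L^1(\R^N)}^{1/p}\Bigl(t^{N/2}\norm{u(t)-M\,G(\cdot,t)}_{L^\infty(\R^N)}\Bigr)^{1-\frac1p},
\end{equation}
and since both factors on the right tend to $0$ by \eqref{eqn:jlvazquez1} and \eqref{eqn:jlvazquez2}, so does the left-hand side, which establishes \eqref{eqn:jlvazquez3}.
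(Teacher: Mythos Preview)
Your argument is correct. However, the paper does not actually prove Theorem~\ref{thm:jlvazquez}: it is stated as a known result and attributed to \cite{duoandikoetxea,giga,vazquez2017asymptotic}, then used as a black box in the proof of Lemma~\ref{lemma:lemmaprev2bis}. So there is no ``paper's own proof'' to compare against.

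That said, the proof you give is essentially the classical one found in those references, particularly in \cite{vazquez2017asymptotic}: write the difference as $\int(G(x-y,t)-G(x,t))u_0(y)\,dy$, use self-similar scaling to reduce to continuity of translations acting on $G_1$, and handle $L^1$ by dominated convergence, $L^\infty$ by a density/moment argument, and $L^p$ by interpolation. All steps are sound; in particular your density splitting $u_0=v+w$ to remove the first-moment hypothesis in the $L^\infty$ case is exactly the standard device, and the uniform-in-$t$ control of the $w$-piece via $t^{N/2}\norm{G(\cdot,t)}_{L^\infty}=(4\pi)^{-N/2}$ is correct.
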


We are going to prove a similar  result for an exterior
domain with homogeneous $\theta$-boundary conditions. However, as we have seen, we have a phenomenon of 
loss of mass. Therefore, we will have to substitute $M$ by the
asymptotic mass $m_{u_0}$ in \eqref{eq:asymptotic_mass}. We will also
have into account the boundary conditions on the hole and therefore
the asymptotic profile, $\Phi^{\theta}$ as in
\eqref{eq:asymptotic_profile_parabolic},
\eqref{eq:asymptotic_profile_elliptic},  will show up in the estimate as
well. 

In this  section we will prove a result
in  the $L^1(\Omega)$  norm as in \eqref{eqn:jlvazquez1}. In Section
\ref{sec:asympt-behav-Linftynorm} we will proof a result in the
$L^{\infty}(\Omega)$  norm as in \eqref{eqn:jlvazquez2} and then in
Section \ref{sec:asympt-behav-Lpnorm}, by
interpolation, as in \eqref{eqn:jlvazquez3}, see Theorem \ref{thm:asympfinal} below.

First of all, we will prove the following lemma which states that the
$\theta$-heat kernel in $\Omega$ and the kernel in $\RN$ are similar in
$L^1(\Omega)$ when the source point $y$ is far away from the hole. The
Lemma is stated for $N\geq 3$. Notice that
\eqref{eqn:lemma:kerncoml1biseq3} below is also true for $N\leq 2$ but gives
no interesting  information because the asymptotic profile is $\Phi^0\equiv
0$. Also, recall that the heat kernel in $\R^{N}$ is given by $k_{\mathbb{R}^N}(x,y,t)= G(x-y,t)$. 

\begin{lemma}
	\label{lemma:kerncoml1bis}
Assume  $N\geq 3$ and let  $k^{\theta}(x,y,t)$ be the heat kernel for $\theta-$boundary conditions
and $k_{\mathbb{R}^N}(x,y,t)$ the heat kernel in the whole space. Then 
\begin{equation}
\label{eqn:lemma:kerncoml1biseq3}
\int_\Omega \abs{k^{\theta}(x,y,t)-k_{\mathbb{R}^N}(x,y,t)}dx\leq
2(1-\Phi^0(y))+\int_\hole k_\RN(x,y,t)dx \qquad y\in \Omega, 
\end{equation}
where $\Phi^0$ is the asymptotic profile of $\Omega$ for Dirichlet
boundary conditions. In particular 
	\begin{equation}
		\label{eqn:lemma:kerncoml1biseq1}
		\limsup_{t\to\infty}\int_\Omega
                \abs{k^{\theta}(x,y,t)-k_{\mathbb{R}^N}(x,y,t)}dx\leq
                2(1-\Phi^0(y)) \qquad y\in \Omega . 
	\end{equation}
	Furthermore, for all $x\in\Omega$, 
	\begin{equation}
		\label{eqn:lemma:kerncoml1biseq2}
		\limsup_{t\to\infty}\limsup_{\abs{y}\to
                  \infty}\int_\Omega
                \abs{k^{\theta}(x,y,t)-k_{\mathbb{R}^N}(x,y,t)}dx=
           \limsup_{\abs{y}\to
                  \infty}     \limsup_{t\to\infty}\int_\Omega
                \abs{k^{\theta}(x,y,t)-k_{\mathbb{R}^N}(x,y,t)}dx =0
                . 
	\end{equation}
\end{lemma}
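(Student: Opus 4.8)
The plan is to establish \eqref{eqn:lemma:kerncoml1biseq3} directly and then extract \eqref{eqn:lemma:kerncoml1biseq1} and \eqref{eqn:lemma:kerncoml1biseq2} by passing to the limit. The one structural identity driving everything is that, by the symmetry $k^\theta(x,y,t)=k^\theta(y,x,t)$ together with the representation \eqref{eqn:ackrnpre}, the $x$-integral of the kernel is itself a solution with constant initial datum,
\[
\int_\Omega k^\theta(x,y,t)\,dx=S^\theta(t)1_\Omega(y).
\]
By \eqref{eq:asymptotic_profile_parabolic} this decreases monotonically to $\Phi^\theta(y)$, so in particular $S^0(t)1_\Omega(y)\ge\Phi^0(y)$; and from the monotonicity \eqref{eqn:neugeqdir2} applied with $\theta\le 1$ and $u_0=1_\Omega\ge 0$ we get $S^\theta(t)1_\Omega(y)\le S^1(t)1_\Omega(y)=1$. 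On the free-space side, conservation of mass gives $\int_\Omega k_{\RN}(x,y,t)\,dx=1-\int_\hole k_{\RN}(x,y,t)\,dx$.

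To handle the absolute value I would interpose the Dirichlet kernel $k^0$ and use
\[
\int_\Omega\abs{k^\theta-k_{\RN}}\,dx\le\int_\Omega\abs{k^\theta-k^0}\,dx+\int_\Omega\abs{k^0-k_{\RN}}\,dx.
\]
The point is that both integrands have a known sign: from \eqref{eq:comparison_kernels_theta} with $0\le\theta$ we have $k^0\le k^\theta$, while by domain monotonicity of the Dirichlet heat kernel we have $k^0\le k_{\RN}$ on $\Omega$. Hence the two integrals equal $S^\theta(t)1_\Omega(y)-S^0(t)1_\Omega(y)$ and $\bigl(1-\int_\hole k_{\RN}\,dx\bigr)-S^0(t)1_\Omega(y)$ respectively, and adding them and using $S^\theta(t)1_\Omega(y)\le 1$ and $S^0(t)1_\Omega(y)\ge\Phi^0(y)$ yields
\[
\int_\Omega\abs{k^\theta-k_{\RN}}\,dx\le 2\bigl(1-\Phi^0(y)\bigr)-\int_\hole k_{\RN}(x,y,t)\,dx,
\]
which is stronger than \eqref{eqn:lemma:kerncoml1biseq3} and hence implies it.

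The limit statements then follow from elementary decay estimates. For \eqref{eqn:lemma:kerncoml1biseq1}, since $\hole$ is bounded, $\int_\hole k_{\RN}(x,y,t)\,dx\le\abs{\hole}(4\pi t)^{-N/2}\to 0$ as $t\to\infty$, so taking $\limsup_{t\to\infty}$ in \eqref{eqn:lemma:kerncoml1biseq3} leaves only $2(1-\Phi^0(y))$. For the iterated limits in \eqref{eqn:lemma:kerncoml1biseq2} I would invoke the two-sided bound \eqref{eqn:boundphi}, valid for $N\ge 3$, which forces $\Phi^0(y)\to 1$ and hence $1-\Phi^0(y)\to 0$ as $\abs{y}\to\infty$. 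In the order $\limsup_{\abs{y}\to\infty}\limsup_{t\to\infty}$ the inner limit is already bounded by $2(1-\Phi^0(y))$ through \eqref{eqn:lemma:kerncoml1biseq1}, and letting $\abs{y}\to\infty$ kills it. In the order $\limsup_{t\to\infty}\limsup_{\abs{y}\to\infty}$ I fix $t$ and let $\abs{y}\to\infty$ in \eqref{eqn:lemma:kerncoml1biseq3}: there $2(1-\Phi^0(y))\to 0$, while $\int_\hole k_{\RN}(x,y,t)\,dx\to 0$ because $\abs{x-y}\to\infty$ uniformly for $x$ in the bounded set $\hole$, so the inner $\limsup$ is $0$ for each fixed $t$ and the outer $\limsup_{t\to\infty}$ is $0$.

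I expect the only genuinely delicate point to be the comparison $k^0\le k_{\RN}$ with the free kernel, that is, domain monotonicity of the Dirichlet heat kernel under enlarging $\Omega$ to $\RN$ (provable by the maximum principle applied to $k_{\RN}-k^0$, which vanishes at $t=0$ inside $\Omega$ and is nonnegative on $\partial\Omega$, or read off from \cite{DdTRB23}). The in-domain comparison $k^0\le k^\theta$ is already recorded in \eqref{eq:comparison_kernels_theta}, and everything else is bookkeeping resting on the identity for $\int_\Omega k^\theta(\cdot,y,t)$ and the monotone convergence to $\Phi^\theta$.
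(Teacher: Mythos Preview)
Your proof is correct and in fact a bit cleaner than the paper's. The paper does not interpose $k^0$ via the triangle inequality; instead it splits $\Omega$ into the sets $\Omega^\pm(t)$ where $k^\theta-k_{\RN}$ has each sign, bounds $\int_{\Omega^-}(k_{\RN}-k^\theta)$ by $\int_\Omega(k_{\RN}-k^0)\le 1-\Phi^0(y)$ using the same two comparisons $k^0\le k^\theta$ and $k^0\le k_{\RN}$ you invoke, and then controls $\int_{\Omega^+}(k^\theta-k_{\RN})$ via the global mass inequality $\int_\Omega(k^\theta-k_{\RN})\le\int_\hole k_{\RN}$ plus the $\Omega^-$ bound. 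Your route bypasses this case split entirely because both $k^\theta-k^0$ and $k_{\RN}-k^0$ are nonnegative, so the absolute values drop and everything reduces to the mass identities $\int_\Omega k^\theta=S^\theta(t)1_\Omega(y)$ and $\int_\Omega k_{\RN}=1-\int_\hole k_{\RN}$. As you note, this actually delivers the sharper estimate $2(1-\Phi^0(y))-\int_\hole k_{\RN}$, with a minus sign where the paper has a plus. The limit arguments for \eqref{eqn:lemma:kerncoml1biseq1} and \eqref{eqn:lemma:kerncoml1biseq2} are the same in both, and the comparison $k^0\le k_{\RN}$ you flag as the delicate point is exactly \eqref{eq:comparison_kernels_Dirichlet_RN} in the paper's appendix.
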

\begin{proof}
If we denote $k^{0}(x,y,t)$ the heat kernel with Dirichlet boundary
conditions, we have, using \eqref{eq:comparison_kernels_theta}, that 
	\begin{equation}
		\label{eqn:lemmaKNKReq1bis}
		k^{\theta}(x,y,t)\geq k^{0}(x,y,t) \qquad
                x,y\in\Omega,  \quad  t>0.
	\end{equation}
	Then, if for fixed $y\in\Omega$, we denote $\Omega^-(t)\defeq
        \{x\in\Omega : k^{\theta}(x,y,t) \leq
        k_{\mathbb{R}^N}(x,y,t)\}$ and
        $\Omega^+(t)=\Omega\backslash\Omega^-(t)$, we have 
	$$
	\int_{\Omega^-(t)}
        (k_{\mathbb{R}^N}(x,y,t)-k^{\theta}(x,y,t))dx
        \myleq{\eqref{eqn:lemmaKNKReq1bis}}  \int_{\Omega^-(t)} (k_{\mathbb{R}^N}(x,y,t)-k^{0}(x,y,t))dx
	$$	
	\begin{equation}
		\label{eqn:lemmaKNKReq2bis}	
		\myleq{\eqref{eq:comparison_kernels_Dirichlet_RN}}
                \int_{\Omega} (k_{\mathbb{R}^N}(x,y,t)-k^{0}(x,y,t))dx
                \leq 1-\int_{\Omega} k^{0}(x,y,t)dx . 
	\end{equation}
From the symmetry of the kernel and
\eqref{eq:asymptotic_profile_parabolic} we have that 
$\int_\Omega k^0(x,y,t)dx=\int_\Omega k^0(y,x,t)dx=S^{0}(t)1_\Omega(y)$ decays
monotonically  in $t$ to $\Phi^0(y)$.

Then, we have that
\begin{equation}
	\label{eqn:lemmaKNKReq5bis}
	\int_{\Omega^-(t)} (k_{\mathbb{R}^N}(x,y,t)-k^{\theta}(x,y,t))dx \leq 1-\Phi^0(y), \qquad \forall t>0, \ \forall y\in \Omega.
\end{equation}
	
In addition, as $\int_{\Omega}k^\theta(x,y,t)dx = S^{\theta}(t)1_\Omega\leq 1$,
\begin{displaymath}
\int_\Omega k^{\theta}(x,y,t)dx \leq 1= \int_\RN
k_\RN(x,y,t)dx=\int_\Omega k_\RN(x,y,t)dx+\int_{\hole} k_\RN(x,y,t)dx, 
\end{displaymath}
then 
	\begin{equation}
		\label{eqn:lemmaKNKReq3bis}
		\int_{\Omega} (k^{\theta}(x,y,t)-k_{\mathbb{R}^N}(x,y,t))dx \leq \int_{\hole}k_{\mathbb{R}^N}(x,y,t)dx\eqdef A(y,t).
	\end{equation}
	Hence, \eqref{eqn:lemmaKNKReq3bis} implies that
	\begin{equation}
		\label{eqn:lemmaKNKReq4bis}
		\begin{aligned}
			\int_{\Omega^+(t)} (k^{\theta}(x,y,t)-k_{\mathbb{R}^N}(x,y,t))dx & =\int_{\Omega} (k^{\theta}(x,y,t)-k_{\mathbb{R}^N}(x,y,t))dx-\int_{\Omega^-(t)} (k^{\theta}(x,y,t)-k_{\mathbb{R}^N}(x,y,t))dx \\
			& \myleq{\eqref{eqn:lemmaKNKReq3bis}}		
			 A(y,t)+\int_{\Omega^-(t)} (k_{\mathbb{R}^N}(x,y,t)-k^{\theta}(x,y,t))dx.
		\end{aligned}
	\end{equation}
	Then,
	$$
	\int_{\Omega} \abs{k^{\theta}(x,y,t)-k_{\mathbb{R}^N}(x,y,t)}dx 
	$$
	$$
	=\int_{\Omega^+(t)} (k^{\theta}(x,y,t)-k_{\mathbb{R}^N}(x,y,t))dx+\int_{\Omega^-(t)} (k_{\mathbb{R}^N}(x,y,t)-k^{\theta}(x,y,t))dx
	$$
	\begin{equation}
		\myleq{\eqref{eqn:lemmaKNKReq4bis}}
                2\int_{\Omega^-(t)}
                (k^{\theta}(x,y,t)-k_{\mathbb{R}^N}(x,y,t))dx+A(y,t)\myleq{\eqref{eqn:lemmaKNKReq5bis}}
                2(1-\Phi^0(y))+A(y,t), 
	\end{equation}
        which is \eqref{eqn:lemma:kerncoml1biseq3}.  Therefore,
        \eqref{eqn:lemma:kerncoml1biseq1} 
follows because $\int_{\hole} k_\RN(x,y,t)dx\to 0$ when $t\to
\infty$. 

To obtain \eqref{eqn:lemma:kerncoml1biseq2} we  have $\abs{\int_{\hole} k_\RN(x,y,t)dx}\leq
\abs{\hole}\norm{k_\RN(\cdot,y,t)}_{L^\infty(\RN)} \leq \abs{\hole}(4\pi t)^{-N/2}$, so we
obtain that $A(y,t)$ decays in $t$ uniformly in
$y\in\Omega$. Furthermore, as
$\lim_{\abs{y}\to\infty}\Phi^0(y)=1$ (See Proposition
\ref{prop:estpro}), we obtain \eqref{eqn:lemma:kerncoml1biseq2} from
\eqref{eqn:lemma:kerncoml1biseq3}. 
\end{proof}

As a consequence of the  previous lemma, if the initial datum is
supported far away from the hole, its asymptotic behaviour is similar
to the Gaussian, as the  following lemma shows.  

\begin{lemma}
	\label{lemma:lemmaprev2bis}
Let $N\geq 3$  and
$\varepsilon>0$. Then, there exists an $R>0$ such that, for any
$u_0\in L^1(\Omega)$ with
$supp(u_0)\subset
\mathbb{R}^N\backslash B(0,R)\subset \Omega$ and $M=\int_\Omega u_0$, there exists a $T\geq 0$
such that 
	\begin{equation}
		\label{lemma:pre1eq7bis}
		\norm{S^\theta(t)u_0- M G(\cdot ,t)}_{L^1(\Omega)}\leq
                \varepsilon \norm{u_0}_{L^1(\Omega)} \qquad \forall
                t>T. 
	\end{equation}	 
\end{lemma}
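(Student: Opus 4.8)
The plan is to compare the exterior solution with the whole-space solution carrying the same data, and then compare that whole-space solution with $MG$ via the classical Theorem \ref{thm:jlvazquez}. Let $\tilde u_0$ denote the extension of $u_0$ by zero to all of $\R^N$; since $\mathrm{supp}(u_0)\subset\R^N\setminus B(0,R)\subset\Omega$ we have $\tilde u_0\in L^1(\R^N)$ with $\int_{\R^N}\tilde u_0 = M$, and for $x\in\Omega$ the whole-space solution reads $S_{\R^N}(t)\tilde u_0(x)=\int_\Omega k_{\R^N}(x,y,t)u_0(y)\,dy$. The triangle inequality then gives
\[
\norm{S^\theta(t)u_0 - M G(\cdot,t)}_{L^1(\Omega)} \le \underbrace{\norm{S^\theta(t)u_0 - S_{\R^N}(t)\tilde u_0}_{L^1(\Omega)}}_{(I)} + \underbrace{\norm{S_{\R^N}(t)\tilde u_0 - M G(\cdot,t)}_{L^1(\Omega)}}_{(II)},
\]
and I would bound each piece by $\tfrac{\varepsilon}{2}\norm{u_0}_{L^1(\Omega)}$ for $t$ large.

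For $(I)$, since $u_0$ is supported in $\Omega$, one has $S^\theta(t)u_0(x)-S_{\R^N}(t)\tilde u_0(x)=\int_\Omega(k^\theta-k_{\R^N})(x,y,t)u_0(y)\,dy$ for $x\in\Omega$. Taking absolute values and applying Tonelli to exchange the order of integration yields
\[
(I)\le \int_\Omega \abs{u_0(y)}\Big(\int_\Omega \abs{k^\theta(x,y,t)-k_{\R^N}(x,y,t)}\,dx\Big)dy,
\]
whose inner integral is exactly what Lemma \ref{lemma:kerncoml1bis} controls. For $y\in\mathrm{supp}(u_0)$, that is $\abs{y}\ge R$, estimate \eqref{eqn:lemma:kerncoml1biseq3} bounds it by $2(1-\Phi^0(y))+\int_\hole k_{\R^N}(x,y,t)\,dx$. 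This is where $R$ gets fixed: since $\Phi^0(y)\to 1$ as $\abs{y}\to\infty$ by Proposition \ref{prop:estpro}, I would choose $R$, depending only on $\varepsilon$ and $\Omega$, so that $2(1-\Phi^0(y))<\varepsilon/4$ whenever $\abs{y}\ge R$. The remaining term obeys $\int_\hole k_{\R^N}(x,y,t)\,dx\le \abs{\hole}(4\pi t)^{-N/2}$ uniformly in $y$, so there is a $T_1$, depending only on $\varepsilon$ and $\hole$, below $\varepsilon/4$ for $t>T_1$. Hence $(I)\le \tfrac{\varepsilon}{2}\norm{u_0}_{L^1(\Omega)}$ for $t>T_1$.

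For $(II)$, I would apply Theorem \ref{thm:jlvazquez} to $\tilde u_0\in L^1(\R^N)$; since $\norm{\cdot}_{L^1(\Omega)}\le\norm{\cdot}_{L^1(\R^N)}$,
\[
(II)\le \norm{S_{\R^N}(t)\tilde u_0 - M G(\cdot,t)}_{L^1(\R^N)}\xrightarrow[t\to\infty]{}0,
\]
so there is a $T_2$ with $(II)\le \tfrac{\varepsilon}{2}\norm{u_0}_{L^1(\Omega)}$ for $t>T_2$. Taking $T=\max(T_1,T_2)$ and adding the two bounds gives \eqref{lemma:pre1eq7bis}.

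The delicate point, and the one I would be careful to get right, is the order of the quantifiers. The radius $R$ must be chosen uniformly in $u_0$, which is possible precisely because the quantity it controls, $2(1-\Phi^0(y))$, depends only on the domain through the Dirichlet profile $\Phi^0$; by contrast $T$ is permitted to depend on $u_0$. This asymmetry is essential: the convergence in $(II)$ furnished by Theorem \ref{thm:jlvazquez} has no rate uniform over initial data of a given $L^1$ norm, but the statement only asks for the existence of some $T$ for each $u_0$, so no uniform rate for $(II)$ is required.
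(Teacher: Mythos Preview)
Your proof is correct and follows essentially the same route as the paper: split via the triangle inequality into the comparison $S^\theta(t)u_0$ vs.\ $S_{\R^N}(t)\tilde u_0$ (controlled by Lemma~\ref{lemma:kerncoml1bis} after choosing $R$ large) and the comparison $S_{\R^N}(t)\tilde u_0$ vs.\ $MG(\cdot,t)$ (controlled by Theorem~\ref{thm:jlvazquez}). Your explicit discussion of the quantifier order---$R$ uniform in $u_0$, $T$ not---is a welcome clarification but does not depart from the paper's argument.
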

\begin{proof}
	As $N\geq 3$, we have that $\Phi^\theta(x)\to 1$ as $\abs{x}\to \infty$. Therefore, given $\varepsilon>0$, we can use Lemma \ref{lemma:kerncoml1bis} to prove that there exists a $T_0>0$ and an $R>0$ such that 
	\begin{equation}
		\label{lemma:pre1eq1bis}
		\int_\Omega \abs{k_{\mathbb{R}^N}(x,y,t)-k^\theta(x,y,t)}dx\leq \frac{\varepsilon}{2} \qquad t\geq T_0,
	\end{equation}
	for every $\abs{y}\geq R$.
        
Now, let $u_0\in L^1(\Omega)$ such that $supp(u_0)\subset \RN\backslash
B(0,R)$ and let   $\int_\Omega u_0=M$. Then, for  $t\geq T_0$, 
	\begin{equation}
				\label{lemma:pre1eq3bis}
		\begin{aligned}
			&\int_\Omega\abs{S_{\mathbb{R}^N}(t)u_0(x)-S^{\theta}(t)u_0(x)}dx
                        \leq  \int_\Omega\int_{\Omega}
                        \abs{k_{\mathbb{R}^N}(x,y,t)-k^\theta(x,y,t)} \abs{u_0(y)}dydx \\
			&=\int_{\RN\backslash B(0,R)}\int_\Omega \abs{k_{\mathbb{R}^N}(x,y,t)-k^\theta(x,y,t)}\abs{u_0(y)}dxdy \myleq{\eqref{lemma:pre1eq1bis}} \frac{\varepsilon}{2}\int_{\RN\backslash B(0,R)} \abs{u_0(y)}dy \leq \frac{\varepsilon}{2}\norm{u_0}_{L^1(\Omega)}.
		\end{aligned}	
	\end{equation}
	Now, using Theorem \ref{thm:jlvazquez}, we have that there
        exists a $T\geq T_0$ such that, extending $u_{0}$ by zero
        outside $\Omega$, 
	\begin{equation}
		\label{lemma:pre1eq6bis}
		\int_{\Omega}\abs{S_{\mathbb{R}^N}(t)u_0(x)-M
                  G(x ,t)}dx\leq
                \int_{\mathbb{R}^N}\abs{S_{\mathbb{R}^N}(t)u_0(x)-M G(x,t)}dx\myleq{Thm \ref{thm:jlvazquez}} \frac{\varepsilon}{2}\norm{u_0}_{L^1(\Omega)} \qquad \forall t\geq T.
	\end{equation}
	Finally, combining \eqref{lemma:pre1eq6bis} and \eqref{lemma:pre1eq3bis} we obtain \eqref{lemma:pre1eq7bis}
\end{proof}

Let us now state the asymptotic result for a general $u_0$ which is
analogous to \eqref{eqn:jlvazquez1}.

\begin{theorem}
\label{thm:asymL1Rbis}

Let  $u_0\in L^1(\Omega)$ and $m_{u_0}\defeq
\int_\Omega \Phi^\theta(x)u_0(x)dx$ the asymptotic mass of the
solution of the   heat equation \eqref{eq:heat_theta} with
$\theta$-boundary conditions. Then if $N\geq 3$ or if $N=2$ and
$\theta\not\equiv 1$, that is, except for Neumann boundary conditions,
we  have that 
	\begin{equation}
		\label{eqn:thm:asymL1Rbiseq1}
		\lim_{t\to\infty}\norm{S^\theta(t)u_0-m_{u_0} G(\cdot,t)}_{L^1(\Omega)}=0.
	\end{equation}
Hence, the mass of the solution decays asymptotically to $m_{u_0}$ and
distributes in space, in a first order approximation, as described by
$m_{u_0}$ times the Gaussian. 
\end{theorem}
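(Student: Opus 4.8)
The plan is to treat the degenerate case $N=2$ separately and then carry out a flow-out argument for $N\ge 3$. When $N=2$ and $\theta\not\equiv1$ we have $\Phi^\theta\equiv0$, so $m_{u_0}=0$ and the assertion collapses to $\norm{S^\theta(t)u_0}_{L^1(\Omega)}\to0$. Writing $u_0=u_0^+-u_0^-$ and using that $S^\theta(t)$ is order preserving, it suffices that $\int_\Omega S^\theta(t)u_0^\pm\to m_{u_0^\pm}=\int_\Omega\Phi^\theta u_0^\pm=0$, which is exactly \eqref{eq:asymptotic_mass}. Thus all the mass is lost and there is nothing further to prove in this case.

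For $N\ge3$ the idea is to combine Lemma~\ref{lemma:lemmaprev2bis}, which already produces the Gaussian profile for data supported far from the hole, with the observation that after a long time the solution carries essentially no mass near the hole. Fix $\varepsilon>0$ and let $R>0$ be the radius produced by Lemma~\ref{lemma:lemmaprev2bis}, and set $w_\tau\defeq S^\theta(\tau)u_0$. I would then use two facts. First, by the $L^1$--$L^\infty$ smoothing \eqref{eqn:LpLq_estimates_theta},
\[
\int_{\Omega\cap B(0,R)}\abs{w_\tau}\,dx\le \abs{B(0,R)}\,\norm{w_\tau}_{L^\infty(\Omega)}\le C\abs{B(0,R)}\,\tau^{-N/2}\norm{u_0}_{L^1(\Omega)},
\]
which tends to $0$ as $\tau\to\infty$. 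Second, by the self-adjointness \eqref{eqn:Sexchange}, $\int_\Omega w_\tau=\int_\Omega u_0\,S^\theta(\tau)1_\Omega$, and since $S^\theta(\tau)1_\Omega$ decreases pointwise to $\Phi^\theta$ by \eqref{eq:asymptotic_profile_parabolic}, dominated convergence gives $\int_\Omega w_\tau\to m_{u_0}$, in accordance with \eqref{eq:asymptotic_mass}. Hence I can fix $\tau$ large and split $w_\tau=a+b$ with $a=w_\tau 1_{B(0,R)}$ and $b=w_\tau 1_{\Omega\setminus B(0,R)}$, so that $\norm{a}_{L^1(\Omega)}\le\varepsilon\norm{u_0}_{L^1(\Omega)}$ and $\abs{\int_\Omega b-m_{u_0}}\le\varepsilon(1+\norm{u_0}_{L^1(\Omega)})$, while $b$ is supported outside $B(0,R)$.

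Next I would propagate: for $t>\tau$ the semigroup property gives $S^\theta(t)u_0=S^\theta(t-\tau)a+S^\theta(t-\tau)b$. The first term is bounded by $\norm{a}_{L^1(\Omega)}\le\varepsilon\norm{u_0}_{L^1(\Omega)}$ using that $S^\theta$ is an $L^1$ contraction. For the second, Lemma~\ref{lemma:lemmaprev2bis} applied to $b$, with the time origin shifted to $\tau$, gives $\norm{S^\theta(t-\tau)b-(\int_\Omega b)\,G(\cdot,t-\tau)}_{L^1(\Omega)}\le\varepsilon\norm{u_0}_{L^1(\Omega)}$ once $t-\tau$ is large. It then remains to compare $(\int_\Omega b)\,G(\cdot,t-\tau)$ with $m_{u_0}G(\cdot,t)$: the mass discrepancy is already $O(\varepsilon(1+\norm{u_0}_{L^1(\Omega)}))$, and a self-similar change of variables shows $\norm{G(\cdot,t-\tau)-G(\cdot,t)}_{L^1(\RN)}\to0$ as $t\to\infty$ for fixed $\tau$. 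Collecting the three contributions yields $\limsup_{t\to\infty}\norm{S^\theta(t)u_0-m_{u_0}G(\cdot,t)}_{L^1(\Omega)}\le C\varepsilon(1+\norm{u_0}_{L^1(\Omega)})$, and letting $\varepsilon\to0$ proves the theorem.

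The main obstacle is the flow-out mechanism itself: one must argue that the $L^\infty$ smoothing drains the mass from every fixed neighbourhood of the hole, so that at a large but fixed intermediate time $\tau$ the solution is, up to arbitrarily small $L^1$ error, a datum supported far from the hole and hence governed by Lemma~\ref{lemma:lemmaprev2bis}. The delicate bookkeeping is to keep every error uniform in $t$ — all the relative errors are controlled by $\norm{u_0}_{L^1(\Omega)}$ — and to respect the order of quantifiers: first choose $R$, then $\tau$, and only afterwards let $t\to\infty$.
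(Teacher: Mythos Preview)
Your proposal is correct and is essentially the same argument as the paper's: choose $R$ from Lemma~\ref{lemma:lemmaprev2bis}, wait until a large intermediate time $\tau$ (the paper's $T_1$) so that the solution carries negligible mass in $B(0,R)$ and its total mass is close to $m_{u_0}$, then split the solution at time $\tau$ into a small near-hole piece and a far piece to which Lemma~\ref{lemma:lemmaprev2bis} applies, and finally absorb the time shift using $\norm{G(\cdot,t-\tau)-G(\cdot,t)}_{L^1}\to 0$ (the paper's Lemma~\ref{lemma:previo}). The only cosmetic differences are notation (your $a,b,\tau$ versus the paper's $v(0)$ and $T_1$) and your slightly more explicit justification of $\int_\Omega w_\tau\to m_{u_0}$ via self-adjointness.
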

\begin{proof}
If $N=2$ and $\theta\not\equiv 1$ from  \cite{DdTRB23} Theorem
  4.9, we have $\Phi^{\theta} =0$ and then $m_{u_{0}}=0$. From
  \eqref{eq:asymptotic_mass} this implies that if $u_{0}\geq 0$, then  the solution converges to
  $0$ in $L^{1}(\Omega)$. By splitting $u_{0}$ in the positve and
  negative parts, we get $\lim_{t \to \infty} S^{\theta}(t) u_{0} = 0$
  in  $L^{1}(\Omega)$, which proves \eqref{eqn:thm:asymL1Rbiseq1}.

  For $N\geq 3$ the idea of the proof is the following:
we will let the time pass so that the solution has lost most of its
mass through the hole  and the remaining mass is far away from the hole so we can
neglect the mass close to it and use Lemma \ref{lemma:lemmaprev2bis}
with the rest. The fact that we will let time pass,  will make the
solution lose its mass, so the asymptotic mass will appear. 
	
	Let $\varepsilon>0$. Firstly, we take the $R>0$ from Lemma \ref{lemma:lemmaprev2bis}.
Now, we denote $u(x,t)=S^\theta(t)u_0$. Due to
\eqref{eqn:LpLq_estimates_theta}, we know that $u(t)$ decays in
$L^\infty(\Omega)$. Therefore, there exists a $T_0\geq 0$ such that 
	\begin{equation}
		\label{eqn:thmasyeq0bis}
		\int_{B(0,R)\cap \Omega} \abs{u(x,t)}dx \leq \varepsilon \qquad \forall t\geq T_0.
	\end{equation}	
	Secondly, we take $m_{u_0}$ the asymptotic of $u_0$ as  in \eqref{eq:asymptotic_mass}. Then, there exists a $T_1\geq T_0$ such that
	\begin{equation}
		\label{eqn:thmasyeq1bis}
		\abs{\int_\Omega u(t)-m_{u_0}}\leq  \varepsilon \quad \forall t\geq T_1.
	\end{equation}
Now we define $v(t)\defeq S^\theta(t) (\chi_Ru(T_1))$ where $\chi_R$
is the characteristic function of $\RN\backslash B(0,R)$. Due to
\eqref{eqn:thmasyeq0bis} we have that 
	\begin{equation}
		\label{eqn:thmasyeq-2bis}
		\norm{v(0)-u(T_1)}_{L^1(\Omega)}=\norm{u(T_1)}_{L^1(\Omega\cap
                  B(0,R))}\leq \varepsilon . 
	\end{equation}
 Therefore, as $S^\theta(t)$ is a contraction semigroup in $L^1(\Omega)$,
	\begin{equation}
		\label{eqn:thmasyeq-1bis}
		\norm{v(t)-u(t+T_1)}_{L^1(\Omega)}\leq \varepsilon \qquad \forall t\geq 0.
	\end{equation}
In addition, if we combine \eqref{eqn:thmasyeq-2bis} with \eqref{eqn:thmasyeq1bis} we have
	\begin{equation}
		\label{eqn:thmasyeq1bisbis}
	\abs{\int_\Omega v(0)-m_{u_0}}\leq 2\varepsilon.
\end{equation}
	As $v(0)$ is supported outside $B(0,R)$, we can use Lemma
        \ref{lemma:lemmaprev2bis} with $v(0)$ in combination with the
        fact that
        $\norm{v(0)}_{L^1(\Omega)}\leq\norm{u(T_1)}_{L^1(\Omega)}\leq\norm{u_0}_{L^1(\Omega)}$
        to obtain that, for some $T_2\geq 0$, 
	\begin{equation}
		\label{eqn:thmasyeq2bisq}
		\norm{v(t)-M_{v_0} G(\cdot, t)}_{L^1(\Omega)}\leq
                \varepsilon \norm{u_0}_{L^1(\Omega)} \qquad \forall
                t\geq T_2 
	\end{equation}
	where $M_{v_0}=\int_\Omega v(0)$. Combining \eqref{eqn:thmasyeq1bisbis} and \eqref{eqn:thmasyeq2bisq} we obtain
	\begin{equation}
		\label{eqn:thmasyeq2bisbisqbis}
		\norm{v(t)-m_{u_0} G(\cdot, t)}_{L^1(\Omega)}\leq 2\varepsilon + \varepsilon \norm{u_0}_{L^1(\Omega)} \qquad \forall t\geq T_2.
	\end{equation}
	Combining now \eqref{eqn:thmasyeq2bisbisqbis} with \eqref{eqn:thmasyeq-1bis} we obtain
	\begin{equation}
	\label{eqn:thmasyeq2bisbisq}
	\norm{u(t+T_1)-m_{u_0} G(\cdot ,t)}_{L^1(\Omega)}\leq
        3\varepsilon+\varepsilon \norm{u_0}_{L^1(\Omega)} \qquad
        \forall t\geq T_2. 
      \end{equation}

To conclude we  need to prove that $G(\cdot,  t)$ and
$G(\cdot,  t+T_1)$ are close in $L^1(\Omega)$ for large
times. This is done in  Lemma \ref{lemma:previo} below. Then  from Lemma
\ref{lemma:previo} with $d=T_1$, we obtain
that there exists a $T_3\geq T_2$ such that 
	\begin{equation}
		\norm{G(\cdot ,t+T_1)-G(\cdot ,t)}_{L^1(\Omega)}\leq \varepsilon \qquad \forall t\geq T_3,
	\end{equation}
	so then
	\begin{equation}
		\label{eqn:thmasyeq3bis}
		\norm{m_{u_0} G(\cdot ,t+T_1)-m_{u_0}G(\cdot ,t)}_{L^1(\Omega)}\leq
                \varepsilon |m_{u_0}|  \qquad \forall t\geq T_3. 
	\end{equation}
	Therefore, combining \eqref{eqn:thmasyeq2bisbisq} and \eqref{eqn:thmasyeq3bis} and denoting $T=T_1+T_3$,
	\begin{equation}
		\label{eqn:thmasyeq5bis}
		\norm{u(\cdot,t)-m_{u_0}G(\cdot ,t)}_{L^1(\Omega)}\leq
                3\varepsilon+\varepsilon |m_{u_0}|+\varepsilon
                \norm{u_0}_{L^1(\Omega)} \qquad \forall t\geq T. 
	\end{equation}
As $\varepsilon>0$ was arbitrary, the theorem is proved.
\end{proof}

Here we prove  the lemma used in  the previous proof.

\begin{lemma}
	\label{lemma:previo}
The Gaussian $G(x,t)= \frac{e^{-\frac{\abs{x}^2}{4t}}}{(4\pi
  t)^{\frac{N}{2}}}$ satisfies, for any $d>0$, 
	\begin{equation}
		\lim_{t\to\infty}\int_{\mathbb{R}^N}\abs{G(x ,t)-G(x ,t+d)}dx=0.
	\end{equation}

\end{lemma}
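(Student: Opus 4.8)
The plan is to exploit the exact self-similar (scaling) structure of the Gaussian in order to convert the statement, which compares $G$ at two different times, into a statement about $L^1$-continuity of dilations. Writing $G_1(y)\defeq G(y,1)=(4\pi)^{-N/2}e^{-\abs{y}^2/4}$, one has the exact identity $G(x,t)=t^{-N/2}G_1(x/\sqrt{t})$, and in particular $\int_{\mathbb{R}^N}G(x,t)\,dx=1$ for every $t>0$, so both terms in the difference are probability densities of unit mass. This is the crucial observation, and the reason the naive approach fails: since $\norm{G(\cdot,t)}_{L^1(\mathbb{R}^N)}=\norm{G(\cdot,t+d)}_{L^1(\mathbb{R}^N)}=1$ for all $t$, the pointwise decay $G(x,t)\to 0$ is useless by itself, and one cannot apply Scheffé's lemma directly because neither integrand converges to a fixed nonzero limit.

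First I would perform the change of variables $x=\sqrt{t}\,y$ in the integral. Setting $\lambda=\lambda(t)=\sqrt{t/(t+d)}$, a direct computation using $G(x,t)=t^{-N/2}G_1(x/\sqrt{t})$ and $(t/(t+d))^{N/2}=\lambda^N$ gives
\begin{equation}
\int_{\mathbb{R}^N}\abs{G(x,t)-G(x,t+d)}\,dx=\int_{\mathbb{R}^N}\abs{G_1(y)-\lambda^N G_1(\lambda y)}\,dy.
\end{equation}
Since $d>0$ is fixed, $\lambda(t)\to 1^-$ as $t\to\infty$, so it suffices to show that $\int_{\mathbb{R}^N}\abs{G_1(y)-\lambda^N G_1(\lambda y)}\,dy\to 0$ as $\lambda\to 1$. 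The scaling reduction is precisely what turns the moving-target comparison into a genuine continuity-of-dilations statement.

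The final step is a routine dominated convergence argument. For each fixed $y$, continuity of $G_1$ and of $\lambda\mapsto\lambda^N$ gives $\lambda^N G_1(\lambda y)\to G_1(y)$ as $\lambda\to 1$. For a dominating function, note that once $t$ is large enough we have $\lambda\in[1/2,1]$, whence $\lambda^N\le 1$ and $\lambda^2\ge 1/4$, so $\lambda^N G_1(\lambda y)\le (4\pi)^{-N/2}e^{-\abs{y}^2/16}$, an integrable bound independent of $\lambda$; the same bound controls $G_1$ itself. Thus the integrand is dominated by a fixed $L^1$ function, and dominated convergence, applied along an arbitrary sequence $\lambda_n\to 1^-$, yields the claim. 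I expect no real obstacle beyond recognizing the scaling reduction, which is the only conceptual point; after that the estimate is immediate.
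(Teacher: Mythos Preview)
Your proof is correct and takes a genuinely different route from the paper's. The paper argues by adding and subtracting the mixed term $\dfrac{e^{-\abs{x}^2/(4(t+d))}}{(4\pi t)^{N/2}}$, which makes each resulting difference nonnegative and allows both pieces to be integrated explicitly via $\int_{\mathbb{R}^N}e^{-\abs{x}^2/a}\,dx=(\pi a)^{N/2}$; the sum evaluates exactly to $2\big((1+d/t)^{N/2}-1\big)\to 0$. Your scaling substitution $x=\sqrt{t}\,y$ reduces the question to $L^1$-continuity of the dilation $\lambda\mapsto \lambda^N G_1(\lambda\,\cdot)$ at $\lambda=1$, after which dominated convergence finishes it. The paper's computation has the advantage of producing an explicit rate ($O(d/t)$), while your argument is more conceptual and would apply verbatim to any self-similar profile, not just the Gaussian.
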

\begin{proof}
	Adding and subtracting  $\frac{e^{-\frac{\abs{x}^2}{4(t+d)}}}{(4\pi t)^{N/2}}$,
	\begin{equation}
		\begin{aligned}
			&\int_{\mathbb{R}^N}\abs{G(x ,t)-G(x ,t+d)}dx=\int_{\mathbb{R}^N}\abs{\frac{e^{-\frac{\abs{x}^2}{4t}}}{(4\pi t)^{N/2}}-\frac{e^{-\frac{\abs{x}^2}{4(t+d)}}}{(4\pi(t+d))^{N/2}}}dx \\
			&\leq \int_{\mathbb{R}^N}\abs{\frac{e^{-\frac{\abs{x}^2}{4t}}}{(4\pi t)^{N/2}}-\frac{e^{-\frac{\abs{x}^2}{4(t+d)}}}{(4\pi t)^{N/2}}}dx+\int_{\mathbb{R}^N}\abs{\frac{e^{-\frac{\abs{x}^2}{4(t+d)}}}{(4\pi t)^{N/2}}-\frac{e^{-\frac{\abs{x}^2}{4(t+d)}}}{(4\pi (t+d))^{N/2}}}dx \\
			&=\int_{\mathbb{R}^N}\left(\frac{e^{-\frac{\abs{x}^2}{4(t+d)}}}{(4\pi t)^{N/2}}-\frac{e^{-\frac{\abs{x}^2}{4t}}}{(4\pi t)^{N/2}}\right)dx+\int_{\mathbb{R}^N}\left(\frac{e^{-\frac{\abs{x}^2}{4(t+d)}}}{(4\pi t)^{N/2}}-\frac{e^{-\frac{\abs{x}^2}{4(t+d)}}}{(4\pi (t+d))^{N/2}}\right) dx.
		\end{aligned}
	\end{equation}

	Hence, using that $\int_\RN e^{-\frac{\abs{x}^2}{a}}=(\pi a)^{N/2}$, we obtain
$$
=2\left(\left(\frac{t+d}{t}\right)^{N/2}-1\right)\to 0
$$
	when $t\to \infty$, which concludes the proof.
	
\end{proof}

The following theorem demonstrates the optimality of Theorem
\ref{thm:asymL1Rbis}. Its proof is inspired by 
\cite{souplet1999geometry}, where Dirichlet boundary conditions are
considered. 

\begin{theorem}
	\label{thm:optimal_rate_L1}
Let $g:[0,\infty) \to (0,1]$ a monotonically decreasing continuous function such that
$\lim_{t\to\infty}g(t)=0$. Then, there exist an initial value $0\leq u_0\in
L^1(\Omega)$ with $\norm{u_0}_{L^1(\Omega)}=1$ and $T>0$ such that, 
	\begin{equation}
		\label{eqn:opteq1bis}
		\norm{S^\theta(t) u_0-m_{u_0}G(\cdot ,t)}_{L^{1}(\Omega)}\geq g(t) \qquad \forall t>T.
	\end{equation}
\end{theorem}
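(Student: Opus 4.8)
The plan is to construct a nonnegative unit-mass datum whose mass stays concentrated far from the hole for an arbitrarily long time, so that it cannot resemble the centred Gaussian $m_{u_0}G(\cdot,t)$, and to tune the construction to the prescribed rate $g$. Two preliminary reductions make everything manageable. First, since we only need a \emph{lower} bound and $g\downarrow 0$, we may replace $g$ by $\max\{g(t),\tfrac{1}{\log(e+t)}\}$, which is still continuous, decreasing, bounded by $1$ and tends to $0$; hence we may assume $g$ decays no faster than $1/\log(e+t)$, which will be crucial at the very end. Second, for any radius $\rho$,
\[
\norm{S^\theta(t)u_0-m_{u_0}G(\cdot,t)}_{L^1(\Omega)}\ge \int_{\{\abs{x}>\rho\}}\!\big(S^\theta(t)u_0-m_{u_0}G(\cdot,t)\big)\ge \int_{\{\abs{x}>\rho\}}\! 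S^\theta(t)u_0-m_{u_0}\!\int_{\{\abs{x}>\rho\}}\!G(\cdot,t).
\]
Using $\int_{\{\abs{x}>M\sqrt t\}}G(x,t)\,dx=\gamma(M)$ with $\gamma(M):=\int_{\{\abs{z}>M\}}(4\pi)^{-N/2}e^{-\abs{z}^2/4}\,dz\downarrow 0$, I pick $M(t)$ with $\gamma(M(t))=\tfrac12 g(t)$ and set $\rho(t)=M(t)\sqrt t$; since $m_{u_0}=\int\Phi^\theta u_0\le 1$, the Gaussian term is $\le\tfrac12 g(t)$. Thus it suffices to keep $\int_{\{\abs{x}>\rho(t)\}}S^\theta(t)u_0\ge\tfrac32 g(t)$ for all large $t$.

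For the construction I take $u_0=\sum_{k\ge1}a_k\phi_k$, where $\phi_k\ge0$ is a fixed unit-mass bump supported in $B(y_k,1)$ with $\abs{y_k}=R_k\uparrow\infty$ (disjoint supports), $a_k>0$ and $\sum_k a_k=1$. With an increasing sequence of ``use-by'' times $\tau_k\uparrow\infty$ I set $R_k=4M(\tau_k)\sqrt{\tau_k}$ and $a_k=3\,g(\tau_{k-1})$, and I choose the $\tau_k$ successively so large that $g(\tau_k)\le 2^{-k}$ (so $\sum a_k<1$, the remaining mass being placed on the innermost bump, which is harmless) and so large that the error terms below are controlled. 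For $t\in(\tau_{k-1},\tau_k]$ the relevant bump is $\phi_k$: since $s\mapsto M(s)\sqrt s$ is increasing, $R_k=4\rho(\tau_k)\ge 4\rho(t)$, so its mass sits well beyond $\rho(t)$, while $\Phi^\theta(y_k)\to1$ by Proposition \ref{prop:estpro} guarantees it loses little mass.

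To estimate $\int_{\{\abs{x}>\rho(t)\}}S^\theta(t)u_0$ I use positivity, $S^\theta(t)u_0\ge a_k S^\theta(t)\phi_k$, and split $\int_{\{\abs{x}>\rho(t)\}}S^\theta(t)\phi_k=\int_\Omega S^\theta(t)\phi_k-\int_{\{\abs{x}\le\rho(t)\}}S^\theta(t)\phi_k$. The total is $\ge m_{\phi_k}\ge 1-C/R_k^{N-2}$ by \eqref{eqn:boundphi}. For the mass that has entered $\{\abs{x}\le\rho(t)\}$ I compare with the free evolution through Lemma \ref{lemma:kerncoml1bis}, which gives $\norm{S^\theta(t)\phi_k-S_{\mathbb{R}^N}(t)\phi_k}_{L^1(\Omega)}\le 2(1-\Phi^0(y_k))+\int_{\hole}k_{\mathbb{R}^N}(\cdot,y_k,t)\le C/R_k^{N-2}+Ce^{-cR_k^2/t}$, while the free mass of a unit lump centred at $\abs{y_k}=R_k\ge4\rho(t)$ inside $\{\abs{x}\le\rho(t)\}$ is at most $\gamma(2M(t))\le\tfrac12 g(t)$. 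Collecting these, $\int_{\{\abs{x}>\rho(t)\}}S^\theta(t)\phi_k\ge 1-\tfrac12 g(t)-C/R_k^{N-2}-Ce^{-cR_k^2/t}$, whence (using $a_k\le1$ and $a_k=3g(\tau_{k-1})\ge 3g(t)$)
\[
\norm{S^\theta(t)u_0-m_{u_0}G(\cdot,t)}_{L^1(\Omega)}\ge a_k-g(t)-C/R_k^{N-2}-Ce^{-cR_k^2/t}\ge 2g(t)-C/R_k^{N-2}-Ce^{-cR_k^2/t}.
\]

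The hard part is the final bookkeeping: I must ensure $C/R_k^{N-2}+Ce^{-cR_k^2/t}\le g(t)$ for every $t\in(\tau_{k-1},\tau_k]$, the worst case being $t=\tau_k$ where $g$ is smallest. Since $R_k^2/t\ge 16M(t)^2\to\infty$, the exponential term is $\le\tfrac12 g(t)$ for $t$ large. The algebraic term is $t$-independent, and here the reduction to slowly decaying $g$ is decisive: as $R_k\ge4\sqrt{\tau_k}$ and $g(\tau_k)\ge 1/\log(e+\tau_k)$, one has $C/R_k^{N-2}\le C'\tau_k^{-(N-2)/2}\le\tfrac12 g(\tau_k)$ once $\tau_k$ is large, because $\tau_k^{(N-2)/2}/\log(e+\tau_k)\to\infty$ for $N\ge3$. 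Choosing the $\tau_k$ large enough to meet all these requirements simultaneously then yields $\norm{S^\theta(t)u_0-m_{u_0}G(\cdot,t)}_{L^1(\Omega)}\ge g(t)$ for all $t>T:=\tau_1$. The genuine obstacle is exactly this simultaneous control of the mass lost through the hole (measured by $1-\Phi^0$) and of the spreading Gaussian background at the active scale; placing the active bump at $R_k\gg\rho(t)=M(t)\sqrt t$ is what decouples its mass from the Gaussian and lets the net lower bound survive down to the arbitrarily small value $g(\tau_k)$.
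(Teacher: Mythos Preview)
Your proof is correct for $N\ge 3$ and follows a genuinely different route from the paper's. Both constructions place a weighted sum of far-away bumps with decreasing weights and tune the locations to the rate $g$, but the mechanism for certifying that the solution is still large at the active scale is different. The paper takes each bump to be a rescaled first Dirichlet eigenfunction of a ball $B(x_n,R_n)$ and uses the chain of comparisons $S^\theta(t)\ge S^0(t)\ge S^0_{B(x_n,R_n)}(t)$ (Theorems \ref{thm:neugeqdir} and \ref{thm:compdom}) to get the pointwise lower bound $u(x,t)\ge e^{-\lambda t/R_n^2}\,2^{-n}R_n^{-N}\psi((x-x_n)/R_n)$ on that ball; this yields directly both a mass lower bound and a pointwise comparison with the Gaussian there, with no appeal to Lemma \ref{lemma:kerncoml1bis} or to the profile estimates. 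Your argument instead tracks the $L^1$ mass outside the parabolic annulus $\{\abs{x}>\rho(t)\}$ with $\rho(t)=M(t)\sqrt t$, and controls it by comparing with the free evolution through Lemma \ref{lemma:kerncoml1bis} together with \eqref{eqn:boundphi}.

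What each approach buys: the paper's eigenfunction trick is dimension-free and self-contained---it works verbatim when $N=2$, whereas your use of Lemma \ref{lemma:kerncoml1bis} and of $1-\Phi^0(y)\le C\abs{y}^{2-N}$ breaks down there---and it avoids the preliminary reduction to slowly decaying $g$. Your approach is more mass-theoretic and arguably more transparent about \emph{why} placing the active bump beyond $\rho(t)$ defeats the centred Gaussian, but it leans on more machinery and needs some housekeeping: with $a_k=3g(\tau_{k-1})$ and merely $g(\tau_k)\le 2^{-k}$ the sum $\sum a_k$ need not be below $1$ (you must start with $\tau_0$ already large and impose a faster geometric decay on $g(\tau_k)$), and Lemma \ref{lemma:kerncoml1bis} has to be integrated against $\phi_k$ over $B(y_k,1)$ rather than evaluated at the single point $y_k$. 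Both are easily repaired and do not affect the strategy.
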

\begin{proof}
We first consider $\lambda>0$ the first Dirichlet eigenvalue for the
Laplacian operator in the unit ball $B$  and its associated positive
eigenfunction $\psi\geq 0$, normalized such that
$\norm{\psi}_{L^1(B)}=1$. 
Then
$\left(\frac{\lambda}{R^2}, R^{-N} \psi\left(\frac{\cdot}{R}\right)\right)$ is 
an eigenvalue-eigenfunction pair of $\Lap$ in $B(0,R)$ with
homogeneous Dirichlet boundary conditions normalized with 
$L^1(B(0,R))$-norm equal to $1$.

	Now we choose $t_n\to \infty$ such that $g(t_n)=\frac{1}{2^{n+2}}$. Then we consider the following initial datum made up by rescaled copies of $\psi$ in disjoint balls with large radius and far away centres:
	\begin{equation}
		u_0(x)=\sum_{n=1}^\infty\frac{1}{2^n}R_n^{-N}\psi(\frac{x-x_n}{R_n})\chi_{B(x_n,R_n)}(x)
                \geq 0, 
	\end{equation}
	where $\chi_{B(x_n,R_n)}$ is the characteristic function of the $B(x_n,R_n)$ and $R_n>1$ and $x_n$ are chosen so that
	\begin{enumerate}
        \item
          $e^{-\frac{\lambda}{R_n^2}t_{n+1}}\geq 3/4$ (this is
                  possible taking  $R_n$ large enough)

		\item
                  $B(x_n,R_n)\subset \Omega$ with $|x_{n}|>R_{n}+1$ (this  is possible taking $\abs{x_n}$ large enough)

		\item
                  $e^{-\frac{\abs{x_n}-R_n}{4 t_{n+1}}}\leq
                  \frac{(4\pi t_n)^{N/2}}{2^{n+2}\abs{B(0,R_n)}}$
                  (this  is possible taking $\abs{x_n}$ large
                  enough). 
                \end{enumerate}

	Therefore, $\norm{u_0}_{L^1(\Omega)}=1$ and for $x\in
        B(x_n,R_n)$ and $t\in[t_n,t_{n+1}]$ we have 
        \begin{equation}
        	\label{eqn:opteq4bis}
        	\begin{aligned}
        		u(x, t) &  = (S^\theta(t)u_0)(x) 
        		\mygeq{~\eqref{eq:comparison_kernels_theta}}
        		(S^0(t)u_0)(x) \mygeq{} \left(S^0(t)\frac{1}{2^nR_n^{N}}   \psi(\frac{\cdot-x_n}{R_n})\chi_{B(x_n,R_n)}\right)(x) \\
        		& \mygeq{Thm. \ref{thm:compdom}}
                        \frac{S^0_{B(x_n,R_n)}(t)\psi(\frac{\cdot-x_n}{R_n})}{2^nR_n^N}(x)
                        =
                        e^{-\frac{\lambda}{R_n^2}t}\frac{\psi(x)}{2^{n}R_n^N}\geq
                        e^{-\frac{\lambda}{R_n^2} t_{n+1}}\frac{\psi(x)}{2^{n}R_n^N}
        	\end{aligned}
		\end{equation}
	where $S^0_{B(x_n,R_n)}(t_n)$ above is the heat semigroup in the ball
	$B(x_n,R_n)$ with Dirichlet boundary conditions. So, in particular, for any $t\in [t_n,t_{n+1}]$
	\begin{equation}
		\label{eqn:opteq2bis}
		\int_\Omega u(x,t)dx 
			\mygeq{\eqref{eqn:opteq4bis}}
			 \frac{e^{-\frac{\lambda}{R_n^2}
                          t_{n+1}}}{2^n} \mygeq{(i)} \frac{3}{2^{n+2}} 
                    \end{equation}
                    
In addition,  from (ii), for $x\in B(x_n,R_n)$ we have $|x|^{2} \geq |x| \geq
|x_{n}|-R_{n}>1$ and using (iii), for any $t\in [t_n,t_{n+1}]$ we obtain
	\begin{equation}
		\label{eqn:opteq3bis}
	\int_{B(x_n,R_n)}G(x,t)dx \leq
        \int_{B(x_n,R_n)}\frac{e^{-\frac{\abs{x}^2}{4 t_{n+1}}}}{(4\pi
          t_n)^{N/2}}dx \leq
        \int_{B(x_n,R_n)}\frac{e^{-\frac{\abs{x_n} -R_n}{4
              t_{n+1}}}}{(4\pi
          t_n)^{N/2}}dx\myleq{(iii)}\frac{1}{2^{n+2}} . 
	\end{equation}

	Therefore, using  $0\leq m_{u_{0}}\leq 1$ as well as
        \eqref{eqn:opteq2bis} and \eqref{eqn:opteq3bis} we get 
	\begin{equation}
		\norm{u(\cdot,t)-m_{u_0}G(\cdot
                  ,t)}_{L^1(\Omega)}\geq \int_{B(x_n,R_n)}
                (u(x,t)-G(x ,t))dx\geq
                \frac{3}{2^{n+2}}-\frac{1}{2^{n+2}}=\frac{1}{2^{n+1}}
                \geq g(t_{n-1})\geq g(t)
	\end{equation}
which proves  \eqref{eqn:opteq1bis}  for every $t\geq T\defeq t_1$.
      \end{proof}

\section{Asymptotic behavior in the $L^{\infty}(\Omega)$ norm}
\label{sec:asympt-behav-Linftynorm}

In this  section we will focus on obtaining an
asymptotic result in $L^\infty(\Omega)$ for the solution,  for initial data in $u_0\in
L^1(\Omega)$ as in \eqref{eqn:jlvazquez2}, by using the  technique of
\emph{matching asymptotics}. For this,  first we 
prove the result in some time-dependent domain  \textit{far away from
  the hole}, see Theorem \ref{thm:asymfarR}.  Then later we will prove
the result in a complementary time-dependent domain  \textit{near the
  hole}, see Theorem \ref{thm:asymcloseR}. Combining both we will obtain the main Theorem
\ref{cor:completelinfRbis}.  This
procedure is similar to and inspired by the arguments in 
\cite{quiros2007}, \cite{quirosnonlocal}.

So we start with the analysis when we look far away from the hole, that is, when
$\abs{x}^2$ is comparable to or greater than $t$. To do this, we will
use some rescaling and compactness arguments based on
\cite{vazquez2017asymptotic}. The estimates obtained in Theorem
\ref{thm:est} will be crucial for this. The main
result is the one that follows. Notice that as the estimate is far from the
hole there is no reflection in the estimate of the $\theta$-boundary
condition other than the asymptotic mass of the solution.

\begin{theorem} [{\bf Behaviour far from the hole}]
	\label{thm:asymfarR}
Let $N\geq 3$, $u_0\in L^1(\Omega)$ and $u(x,t)=S^\theta(t)u_0(x)$ the solution of
the heat equation with some homogeneous $\theta$-boundary conditions on
$\partial\Omega$.  Then, for any  $\delta>0$, 
	\begin{equation}
		\label{eqn:asymfarRdesiredresult}
		\lim_{t\to\infty}t^{\frac{N}{2}}\norm{u(\cdot,t)-m_{u_0}G(\cdot ,t)}_{L^\infty(\{\abs{x}^2\geq \delta t\})}=0,
	\end{equation}
where $m_{u_0}=\int_\Omega \Phi^\theta(x)u_0(x)dx$ is the asymptotic
mass. Hence, the solution behaves far away from the hole as a Gaussian
times the asymptotic mass of the solution.

In addition, we have convergence of the derivatives, that is for  any multi-index $\alpha$,
\begin{equation}
	\label{eqn:asymfarRdesiredresult2}
	\lim_{t \to\infty} t^{\frac{N+\abs{\alpha}}{2}} \norm{D^{\alpha}
		u(\cdot,t) - m_{u_0} D^{\alpha} G(\cdot
                ,t)}_{L^\infty(\{\abs{x}^2\geq \delta t\})}=0 . 
\end{equation}
\end{theorem}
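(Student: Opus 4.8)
The plan is to use a rescaling and compactness argument in the spirit of \cite{vazquez2017asymptotic}. The key observation is that the Gaussian is self-similar, so the natural scaling to expose the large-time behaviour is $u_\lambda(x,t) \defeq \lambda^N u(\lambda x, \lambda^2 t)$ for a parameter $\lambda \to \infty$. The region $\{\abs{x}^2 \geq \delta t\}$ is scale-invariant under this transformation, so proving \eqref{eqn:asymfarRdesiredresult} is equivalent to showing that the rescaled family $u_\lambda(\cdot, 1)$ converges uniformly on $\{\abs{x}^2 \geq \delta\}$ to $m_{u_0} G(\cdot, 1)$ as $\lambda \to \infty$. First I would rewrite the target quantity $t^{N/2}\norm{u(\cdot,t) - m_{u_0}G(\cdot,t)}_{L^\infty}$ in the rescaled variables and reduce the limit $t \to \infty$ to the limit $\lambda \to \infty$ evaluated at unit time.

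The heart of the proof is a compactness argument. I would verify that the rescaled functions $u_\lambda$ satisfy the heat equation in a scaled domain $\Omega_\lambda = \lambda^{-1}\Omega$, which \emph{exhausts} $\mathbb{R}^N$ as $\lambda \to \infty$ since the rescaled hole $\lambda^{-1}\hole$ shrinks to the origin. The crucial uniform bounds come from Theorem \ref{thm:est}: the estimate \eqref{eqn:esteq1} with $p=1$ gives, after rescaling, uniform $L^\infty$ bounds on $u_\lambda$ and on its space and time derivatives on any compact subset of $\{\abs{x}^2 \geq \delta\}$ that stays away from the shrinking hole. Indeed, for points with $\abs{x}^2 \geq \delta$ at $t=1$, the quantity $\min(t^{1/2}, d_x)$ that appears in the denominator is bounded below uniformly for large $\lambda$, which is exactly why the derivative estimates do not degenerate. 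These bounds furnish, via Arzelà–Ascoli, a subsequence converging in $C^1_{loc}$ (indeed in $C^\infty_{loc}$ by bootstrapping the higher derivative estimates) on the far region to some limit $U$.

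The next step is to identify the limit $U$. Any limit $U$ solves the heat equation on $\{\abs{x}^2 > \delta\}$ in $\mathbb{R}^N$ — the boundary conditions on $\partial\Omega$ disappear because the rescaled boundary recedes to the origin. To pin down $U$ I would track the mass: the total integral $\int_{\Omega} u(\cdot,t) \to m_{u_0}$ by \eqref{eq:asymptotic_mass}, and this mass is conserved under the rescaling, so $U$ carries total mass $m_{u_0}$. Combined with the fact that $U$ must be a self-similar solution of the free heat equation with a point source at the origin, the uniqueness of such solutions forces $U = m_{u_0} G$. Since every convergent subsequence has the same limit, the whole family converges, which yields \eqref{eqn:asymfarRdesiredresult}, and the $C^\infty_{loc}$ convergence simultaneously delivers the derivative statement \eqref{eqn:asymfarRdesiredresult2}.

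The main obstacle I anticipate is the identification of the limit, specifically controlling the mass near the origin in the rescaled picture. As $\lambda \to \infty$ the hole shrinks to a point, and one must ensure that no mass concentrates at that point in the limit (which would add a spurious singular term to $U$); this requires quantifying that the solution's mass in a small neighbourhood of the hole is negligible, using the $L^\infty$ decay from \eqref{eqn:LpLq_estimates_theta} together with the shrinking volume of $\lambda^{-1}\hole$. A secondary technical point is that Theorem \ref{thm:jlvazquez} (or the analogous argument of Lemma \ref{lemma:lemmaprev2bis}) is needed to guarantee that the mass really does spread out as a Gaussian rather than, say, escaping to spatial infinity; handling the far-field tail rigorously, so that uniform convergence (not merely $L^1$ convergence) holds on the unbounded region $\{\abs{x}^2 \geq \delta\}$, is where the self-similar structure and the sharp pointwise kernel bounds must be combined carefully.
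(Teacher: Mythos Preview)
Your overall strategy—the parabolic rescaling $u_\lambda(x,t)=\lambda^N u(\lambda x,\lambda^2 t)$, uniform derivative bounds from Theorem \ref{thm:est}, and Arzelà–Ascoli—matches the paper's approach. The gap is in the identification of the limit.

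You assert that the limit $U$ ``must be a self-similar solution of the free heat equation with a point source at the origin,'' but nothing in your argument forces self-similarity: that would require knowing \emph{a priori} that the full family (not just subsequences) converges, which is exactly what you are trying to prove. The paper instead identifies the limit by its initial trace. For this it is essential to obtain the subsequential limit $u_\infty$ on \emph{all} of $(\RN\setminus\{0\})\times(0,\infty)$, not merely on $\{\abs{x}^2>\delta\}$ at $t=1$, so that one can send $t\to 0^+$. Concretely, the paper (i) first reduces to $0\leq u_0\in C_c^\infty(\Omega)$ by density, (ii) uses the kernel bound \eqref{eqn:gyryabound} to obtain a scale-invariant Gaussian upper bound $u_\lambda(x,t)\leq C\,G(x,ct+c\lambda^{-2})$, hence $u_\infty(x,t)\leq C\,G(x,ct)$, (iii) invokes a removable-singularity theorem (Theorem \ref{thm:removablesing}) to extend $u_\infty$ across $\{0\}$ to a bona fide solution on $\RN\times(0,\infty)$, and (iv) uses the Gaussian bound again to see that $u_\infty(\cdot,t)\overset{\ast}{\rightharpoonup} m_{u_0}\delta_0$ as $t\to 0^+$, after which uniqueness for the heat equation with bounded-measure data yields $u_\infty=m_{u_0}G$. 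Your remark about ``no mass concentrating at the origin'' is pointing at step (iii), but the initial-trace identification (iv) is the piece actually missing from your outline.

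A second gap concerns \eqref{eqn:asymfarRdesiredresult2}. Local $C^\infty$ convergence of $u_\lambda(\cdot,1)$ on compact subsets of $\{\abs{x}\geq\sqrt{\delta}\}$ does not by itself yield \emph{uniform} convergence of derivatives on that unbounded set. For the function itself, the paper upgrades compact-to-uniform convergence via the Gaussian tail bound from (ii) above. For the derivatives, the paper runs a separate induction in the original variables: assuming the result at order $\alpha$, interior parabolic Schauder estimates (Theorem \ref{thm:schauderest}) applied on $\{t\geq t_0/2,\ \abs{x}^2\geq \tfrac{\delta}{2}t\}$—where the parabolic distance to the boundary is of order $\sqrt{t_0}$—gain one spatial derivative together with the extra factor $t^{1/2}$ in the rate.
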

\begin{proof}
We follow the steps of \cite{vazquez2017asymptotic}. We firstly assume
that $u_0$ is positive and $u_0\in C^\infty_c(\Omega)$. Recall that we
also assume that $0\in \mathring{\hole}$.  
	
	\textbf{Step 1:} We define, for $\lambda>0$ 
	 \begin{equation}
	\label{eqn:rescl1}
	u_{\lambda}(x,t)\defeq \lambda^{N}u(\lambda x, \lambda^2 t).
	\end{equation}
It is straightforward to check that $(u_\lambda)_t-\Lap u_\lambda=0$
in $\frac{\Omega}{\lambda}\times(0,\infty)$ and
$u_\lambda(x,0)=\lambda^Nu_0(\lambda x)$ for $x\in
\frac{\Omega}{\lambda}$. 
Notice that as we take $\lambda>0$ large, we concentrate the hole to a
point. 

\textbf{Step 2:} 
Here we use the estimates from Theorem \ref{thm:est} to obtain uniform
bounds for $u_\lambda$ and its derivatives for $\lambda$ large.

First, take $\delta_{1}>0$ and $t\geq \delta_1$. Then for all $x\in
\frac{\Omega}{\lambda}$ we have from \eqref{eqn:LpLq_estimates_theta} 
	\begin{equation}
		\abs{u_\lambda(x,t)}=\abs{\lambda^N u(\lambda x,\lambda^2 t)}\myleq{\eqref{eqn:LpLq_estimates_theta}}
                \frac{C\norm{u_0}_{L^1(\Omega)}}{\delta_1^{N/2}
                } \qquad \forall t\geq \delta_1. 
	\end{equation}

Second, take $\delta_2>0$ and  we can find $M>1$ such that $\hole\subset
B(0,\frac{M\delta_2}{2})$. Hence, if 
$\abs{x}\geq \delta_2$ and $\lambda \geq M$, then  $|\lambda x|\geq
M \delta_{2}$ and then $\lambda x \in
\Omega$. 

Thus,  for every $t\geq \delta_1$, $\abs{x}\geq \delta_2$ and $\lambda
\geq M$, we get from Theorem \ref{thm:est}
\begin{equation*}
		\abs{\frac{\partial^{k+\abs{\beta}}}{\partial
                    t^{k}\partial x^\beta} u_\lambda(x,t)} =
                \lambda^{N+\abs{\beta}+2k}
                \abs{\frac{\partial^{k+\abs{\beta}}}{\partial
                    t^{k}\partial x^\beta} u(\lambda x,\lambda^2 t)}
                \myleq{Thm \ref{thm:est}}
                \frac{\lambda^{N+\abs{\beta}+2k}
                  C_{\beta,k}\norm{u_0}_{L^1(\Omega)}}{(\lambda^{2}
                  t)^{N/2+k}\min
                  (\lambda t^{1/2}, d(\lambda x,\partial    \Omega))^{\abs{\beta}}}  
	\end{equation*}
	\begin{equation}
		\label{eqn:uinftyboundeq1bis}
		\myleq{$(t\geq
                  \delta_1)$}\frac{\lambda^{\abs{\beta}}C_{\beta,k}\norm{u_0}_{L^1(\Omega)}}{\delta_1^{N/2+k}\min
                  (\lambda\delta_1^{1/2}, d(\lambda x,\partial \Omega))^{\abs{\beta}} }.
	\end{equation}
	Now we use that $d(\lambda x, \partial \Omega)\geq  d(\lambda x,\partial B(0,\frac{M\delta_2}{2}))\geq \lambda\abs{x}-\frac{M\delta_2}{2}\geq \frac{\lambda \abs{x}}{2}$, so
\begin{equation}
\label{eqn:uinftyboundeq2bis}
\min(\lambda \delta_1^{1/2}, d(\lambda x, \partial \Omega)) \geq \min
(\lambda \delta_1^{1/2}, \lambda \abs{x}/2)\geq \lambda\min
(\delta_1^{1/2}, M \delta_2/2).
\end{equation}
Hence, combining \eqref{eqn:uinftyboundeq1bis} and
\eqref{eqn:uinftyboundeq2bis} we obtain 
\begin{equation}
\abs{\frac{\partial^{k+\abs{\beta}}}{\partial t^{k}\partial x^\beta}
  u_\lambda(x,t)} \leq
\frac{C_{\beta,k}\norm{u_0}_{L^1(\Omega)}}{\delta_1^{N/2+k}\min
  (\delta_1^{1/2}, M \delta_2/2)^{\abs{\beta}}}.
\end{equation}
So, we have uniform estimates of the derivatives of $u_\lambda$ for
$t\geq \delta_1$ and $\abs{x}\geq \delta_2$ where
$\delta_1,\delta_2>0$ are arbitrary and $\lambda$ sufficiently large.

\textbf{Step 3:}
        From the  uniform estimates of $u_\lambda$ and its
derivatives,  we have uniform convergence on compact sets of 
$\RN\backslash\{0\}\times(0,\infty)$ of a subsequence and all its
derivatives to a limit function  $u_\lambda\to u_\infty$. Furthermore,
as the derivatives converge, we also have that $(u_\infty)_t-\Lap
u_\infty=0$ in $\RN\backslash\{0\} \times(0,\infty)$. 
	
	\textbf{Step 4:}	
	As we assumed that $u_0$ is positive and $u_0\in
        C^\infty_c(\Omega)$, there exists a $M>0$ such that,
        extending $u_{0}$ by zero outside $\Omega$, 
	\begin{equation}
		MG(x ,1)\geq u_0(x)  \qquad \forall x\in\R^{N}.
	\end{equation}
	Hence, by time monotonicity of the heat semigroup in $\R^{N}$ 
	\begin{equation}
		\label{eqn:umenorkbis}
		MG(x ,t+1)\geq u_{\RN}(x,t) \qquad \forall (x,t)\in\Omega\times[0,\infty).
	\end{equation}
where $u_\RN$ is the solution of the heat equation in $\RN$ with initial datum $u_0$
extended by zero outside $\Omega$. Using then \eqref{eqn:gyryabound},
	\begin{equation}
		u(x,t)\myleq{\eqref{eqn:gyryabound}}
                C\int_{\Omega}\frac{e^{-\frac{\abs{x-y}^2}{4ct}}}{(4\pi
                  t)^{N/2}}u_0(y)dy\leq
                Cu_{\RN}(x,ct)\myleq{\eqref{eqn:umenorkbis}} C M G(x , c(t+1)) \qquad \forall (x,t)\in\Omega\times(0,\infty).
	\end{equation}
	Then, using the self-similarity of $G$, that is,
        $G(x,t)=\lambda^{N}G(\lambda x,\lambda^2 t)$, for $x \in
        \frac{\Omega}{\lambda}$ and $t>0$ we have 
	\begin{equation}
		\label{eqn:uinftyboundbisbis}
		u_\lambda(x,t)=\lambda^Nu(\lambda x,\lambda^2 t)\leq
               C M\lambda^N G(\lambda x,c (\lambda^2 t+1)) =CM G(x,ct+\frac{c}{\lambda^2}).
	\end{equation}
		So, passing to the limit when $\lambda \to \infty$,
	\begin{equation}
		\label{eqn:uinftyboundbis2bis}
		u_\infty(x,t)\leq CM G(x,ct) \qquad \forall (x,t)\in \RN\backslash\{0\}\times(0,\infty).
	\end{equation}
	
\textbf{Step 5:} As $u_\infty$ is a bounded solution of the heat
equation in $\RN\backslash\{0\}$ with $N\geq 3$, we can use Theorem
\ref{thm:removablesing} to remove the singularity to get  that
$u_\infty$ is  in fact   a solution of the heat equation in
$\R^{N}\times (0,\infty)$.

\textbf{Step 6: } In this step we are going to identify  the initial
datum of $u_\infty$. First of all, we know from
\eqref{eq:asymptotic_mass} that 
	\begin{equation}
		\lim_{t\to\infty}\int_\Omega u(x,t)dx=\int_\Omega \Phi^\theta(x)u_0(x)dx = m_{u_0}.
	\end{equation}
	Then, for $t>0$,
	\begin{equation}
		\label{eqn:uinftyboundbis3bis}
		\int_{\frac{\Omega}{\lambda}} u_\lambda(x,t)dx=\int_{\frac{\Omega}{\lambda}} \lambda^N u(\lambda x,\lambda^2 t)dx= \int_\Omega u(x,\lambda^2 t)dx \to m_{u_0}
	\end{equation}
when $\lambda\to\infty$. 
	
	Therefore, from the  uniform convergence of $\{u_\lambda\}$ on
        compact sets of $\RN\setminus\{0\}$ and  the bound
        \eqref{eqn:uinftyboundbisbis} then Lebesgue's theorem gives
        that  for $t>0$, we have  
	\begin{equation}
		\label{eqn:massofuinfty2bis}
		\int_{\RN}u_\infty(x,t)dx=\lim_{\lambda\to\infty}
                \int_{\frac{\Omega}{\lambda}} u_\lambda(x,t)dx
                \myeq{\eqref{eqn:uinftyboundbis3bis}} m_{u_0} 
	\end{equation}
which, in particular, gives that 
        $\{u_\infty(t)\}_{t\geq 0}\subset L^1(\RN)$ is bounded. 
        
Hence, for any sequence $t_{n}\to 0$, we can find a 
subsequence (that we denote the same) such that  
$u_\infty(t_n)\overset{\ast}{\rightharpoonup} \mu$ weakly 
in the sense of measures to a bounded  measure $\mu$, 
that is 
	\begin{equation}
		\lim_{t_n\to 0}\int_\RN u_\infty(x,t_n)\varphi(x)dx=
                \int_\RN\varphi d\mu \qquad \forall \varphi\in
                C_0(\RN) . 
	\end{equation}
Estimate  \eqref{eqn:uinftyboundbis2bis} guarantees that $\mu$ is
concentrated in $0$, so it is a Dirac distribution whose  mass is
determined by  $\eqref{eqn:massofuinfty2bis}$, and therefore  $\mu =
m_{u_0}\delta$.  Since the limit is independent of the weakly
convergent subsequence then the whole sequence 
converges to $\mu$. 
From  the uniqueness of bounded solutions for the heat equation with
bounded measures as initial data (see for example \cite{aroberrobinson1}
Theorem 4.1), we have that 
\begin{equation}
u_\infty(x,t)=m_{u_0}G(x ,t) \qquad (x,t)\in \RN \times(0,\infty).
\end{equation}	

In particular, the limit function $u_{\infty}$ in Step 3 is
independent of the subsequence of $\{u_\lambda\}$ and therefore the
whole family  $\{u_\lambda\}$  converges to $u_\infty$.

\textbf{Step 7:} Now we obtain
 \eqref{eqn:asymfarRdesiredresult}. From the uniform  convergence 
$u_\lambda\to u_\infty$ in compact sets of
$\RN\backslash\{0\}\times(0,\infty)$,  for $t=1$ and  $\delta_1\leq \abs{x}\leq \delta_2$
we have 
	\begin{equation}
		\lim_{\lambda\to\infty}\norm{u_\lambda(\cdot,1)-m_{u_0}G(\cdot ,1)}_{L^\infty(\{\delta_1\leq \abs{x}\leq \delta_2\})}= 0.
	\end{equation}
But from \eqref{eqn:uinftyboundbisbis}, 
$\{u_\lambda\}_\lambda$ are uniformly small for $\abs{x}\geq \delta_2$
and  $\lambda\geq 1$  as they decay exponentially since 
\begin{equation}
	u_\lambda(x,1) \myleq{\eqref{eqn:uinftyboundbisbis}} CM G(x,c+\frac{c}{\lambda^2})= \frac{M}{(4\pi c( 1+\frac{1}{\lambda^2}))^{N/2}}e^{-\frac{\abs{x}^2}{4c(1+\frac{1}{\lambda^2})}}\leq Ce^{-\frac{\abs{x}^2}{C}},
\end{equation}
	for $\lambda\geq 1$. Thus,  we have  uniform convergence for
        $\abs{x}\geq \delta_1$ and 
        \begin{equation}
		\lim_{\lambda\to\infty}\norm{u_\lambda(\cdot,1)-m_{u_0}G(\cdot ,1)}_{L^\infty(\{\delta_1\leq \abs{x}\})}= 0,
	\end{equation}
which, rewritten in terms of the definition of $u_\lambda$,  is
\begin{equation}
			\lim_{\lambda\to\infty}\norm{\lambda^N
                          u(\lambda \cdot,\lambda^2)-m_{u_0} G(\cdot ,1)}_{L^\infty(\{\delta_1\leq \abs{x}\})}=0,
	\end{equation}
and using the self-similarity of $G$ this gives 
	\begin{equation}
		 \lim_{\lambda\to\infty}\lambda^N\norm{u(\lambda
                   \cdot,\lambda^2)-m_{u_0} G(\lambda \cdot, \lambda^2)}_{L^\infty(\{\delta_1\leq \abs{x}\}) }=0.
	\end{equation}	
	Then, renaming $t=\lambda^{2}$ and $y=\lambda x = t^{1/2}x$,
        we get 
	\begin{equation}
		\lim_{t \to\infty} t^{N/2} \norm{u(\cdot,t)-m_{u_0}G(\cdot ,t)}_{L^\infty(\{\delta_1\leq \abs{y}t^{-1/2}\}) }=0,
	\end{equation}
which is \eqref{eqn:asymfarRdesiredresult}.

\textbf{Step 8:} 	If $0 \leq u_0\in L^{1}(\Omega)$, given $\eps
>0$,  we can
consider an approximation $0\leq u_0^\varepsilon\in C^\infty_c(\Omega)$ such
that $\norm{u_0-u_0^\varepsilon}_{L^1(\Omega)}\leq \varepsilon$. Then,
Corollary \ref{cor:LpLq_estimates} gives
\begin{equation}
	\label{eqn:asymfarReq2}
		t^{N/2}\norm{S^\theta(t)(u_0-u_0^\varepsilon)}_{L^\infty(\Omega)}\myleq{~\eqref{eqn:LpLq_estimates_theta}}
                C\norm{u_0-u_0^\varepsilon}_{L^1(\Omega)}\leq
                C\varepsilon , \quad t>0. 
	\end{equation} 
		Furthermore, 
	\begin{equation}
		\label{eqn:asymfarReq1}
		\abs{m_{u_0}-m_{u_0^\varepsilon}} =  \abs{\int_\Omega
                  \Phi^\theta (u_0-u_0^\varepsilon)}\leq\int_\Omega
                \abs{u_0-u_0^\varepsilon}\leq \varepsilon  , \quad t>0
	\end{equation} 
and therefore 
	\begin{equation}
		\label{eqn:asymfarReq3}
		t^{N/2}\norm{(m_{u_0}-m_{u_0^\varepsilon})G(\cdot
                  ,t)}_{L^\infty(\Omega)}\leq C\abs{
                  m_{u_0}-m_{u_0^\varepsilon}} \leq C\varepsilon ,
                \quad t>0. 
	\end{equation}
So using \eqref{eqn:asymfarReq2} and \eqref{eqn:asymfarReq3} and
adding and subtracting $u^\varepsilon(\cdot,t) = S^\theta(t)u_0^\varepsilon$ and
$m_{u_0^\varepsilon} G(\cdot  ,t)$ we get 
	\begin{equation}
		\lim_{t\to\infty}t^{\frac{N}{2}}\norm{u(\cdot,t)-m_{u_0}G(\cdot
                  ,t)}_{L^\infty(\{\abs{x}^2\geq \delta t\})}\leq
                \lim_{t\to\infty}t^{\frac{N}{2}}\norm{u^\varepsilon(\cdot,t)-m_{u^\varepsilon_0}G(\cdot
                  ,t)}_{L^\infty(\{\abs{x}^2\geq \delta
                  t\})}+2C\varepsilon = 2C\varepsilon. 
	\end{equation}
Since  $\varepsilon$ is  arbitrary, we have
\eqref{eqn:asymfarRdesiredresult}. 
	
\textbf{Step 9:}  The case in which $u_0$ is not positive just follows by the
decomposition in its negative and positive part. Using  $u_0=u_0^+-u_0^-$ and
apply the Theorem individually for each $u_0^{\pm}$. Note that
$m_{u_0}=m_{u_0^+}-m_{u_0^-}$ by 
\eqref{eq:asymptotic_mass}. Then, using the triangle inequality of the
norm we obtain the  result as with 
$u(\cdot,t) = S^\theta(t)u_0$ we have 
	\begin{equation}
		\begin{aligned}
			& t^{\frac{N}{2}}\norm{u(\cdot,t) -
                          m_{u_0}G(\cdot
                          ,t)}_{L^\infty(\{\abs{x}^2\geq \delta t\})}=
                        t^{\frac{N}{2}}\norm{S^\theta(t)u_0^+-S^\theta(t)u_0^-
                          -\left(m_{u_0^+}-m_{u_0^-}\right)G(\cdot
                          ,t)}_{L^\infty(\{\abs{x}^2\geq \delta t\})} 
			\\
			& \leq t^{\frac{N}{2}}\norm{S^\theta(t)u_0^+-m_{u_0^+}G(\cdot ,t)}_{L^\infty(\{\abs{x}^2\geq \delta t\})} + t^{\frac{N}{2}}\norm{S^\theta(t)u_0^--m_{u_0^-}G(\cdot ,t)}_{L^\infty(\{\abs{x}^2\geq \delta t\})} \to 0.
		\end{aligned}		
	\end{equation}
\textbf{Step 10: }Now we obtain the convergence of the derivatives. Let us proceed by induction on the order of the multi-index $\alpha$. Assume we have, for any $\delta\geq 0$,
\begin{equation}
	\lim_{t \to\infty} t^{\frac{N+\abs{\alpha}}{2}} \norm{D^{\alpha}
		u(\cdot,t) - m_{u_0} D^{\alpha} G(\cdot ,t)}_{L^\infty(\{\abs{x}^2\geq \delta t\})}=0.
\end{equation}
Let us prove that
\begin{equation}
	\lim_{t \to\infty} t^{\frac{N+\abs{\alpha}+1}{2}} \norm{\frac{\partial}{\partial x_i}D^{\alpha}
		u(\cdot,t) - m_{u_0} \frac{\partial}{\partial x_i} D^{\alpha} G(\cdot ,t)}_{L^\infty(\{\abs{x}^2\geq \delta t\})}=0.
\end{equation}
Take $\delta>0$ and $\abs{x_0}^2\geq \delta t_0$. As $u - m_{u_0}  G$
is a solution of the heat equation in $\Omega$, we can use Theorem
\ref{thm:schauderest} with $Q=\{(x,t): t\geq t_0/2, \ \abs{x}^2\geq
\frac{\delta}{2}t\}$. Then, we obtain 
\begin{equation}
	d_{(x_0,t_0)}\abs{\frac{\partial}{\partial x_i}D^\alpha u(x_0,t_0) - m_{u_0} \frac{\partial}{\partial x_i} D^{\alpha} G(x_0 ,t_0)}\leq \norm{D^{\alpha}
		u(\cdot,t) - m_{u_0} D^{\alpha} G(\cdot ,t)}_{L^\infty(Q)}
\end{equation}
where $d_{(x_0,t_0)}=\inf\{(\abs{x_0-x}^2+\abs{t_0-t})^{1/2} \ :
\ (x,t)\in \partial Q
\}=\min\left(\sqrt{\frac{t_0}{2}},\sqrt{\frac{\delta}{2}t_0}\right)=C\sqrt{t_0}$. Therefore,
taking the supremum over $(x_0,t_0)$ such that $\abs{x_0}^2 \geq
\delta t_0$, 
\begin{equation}
	\begin{aligned}
		& \lim_{t \to\infty} t^{\frac{N+\abs{\alpha}+1}{2}} \norm{\frac{\partial}{\partial x_i}D^{\alpha}
			u(\cdot,t) - m_{u_0} \frac{\partial}{\partial x_i} D^{\alpha} G(\cdot ,t)}_{L^\infty(\{\abs{x}^2\geq \delta t\})}  \\
		& \leq C\lim_{t \to\infty} t^{\frac{N+\abs{\alpha}}{2}} \sup_{s\geq t/2}\norm{D^{\alpha}
			u(\cdot,s) - m_{u_0} D^{\alpha} G(\cdot ,s)}_{L^\infty(\{\abs{x}^2\geq \frac{\delta}{2} s\})} \\
		& \leq C\lim_{t\to\infty}\sup_{s\geq t/2}s^{\frac{N+\abs{\alpha}}{2}}\norm{D^{\alpha}
			u(\cdot,s) - m_{u_0} D^{\alpha} G(\cdot ,s)}_{L^\infty(\{\abs{x}^2\geq \frac{\delta}{2} s\})}=0
	\end{aligned}
\end{equation}
by the induction hypothesis.
\end{proof}

\medskip 
Now we will obtain the asymptotic behaviour of the solutions
near the hole. To do this, we will do a comparison argument with
suitable sub and supersolutions. Later we do the matching of the
asymptotic behavior far and near the  hole. 

We first need the following two lemmas that will allows us to
construct sub and super solutions close to the hole. The first one
 is immediate from the expression for  the  Laplacian for a radial
 function.

\begin{lemma}
	\label{lemma:z}
	Let $0<\gamma<1$ and
	$z(x)\defeq \frac{1}{\abs{x}^{\gamma}}$. Then
	\begin{equation}
		\label{eqn:lemmaz0}
		-\Lap z(x)= \gamma(N-2-\gamma)\dfrac{z(x)}{\abs{x}^{2}}.
	\end{equation}
\end{lemma}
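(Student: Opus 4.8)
The plan is to compute $\Lap z$ directly from the expression for the Laplacian of a radial function, which reduces the entire problem to a one-variable differentiation. Writing $r=\abs{x}$, recall that for any smooth $f(r)$ one has $\Lap f = f''(r) + \frac{N-1}{r}f'(r)$ in $\RN$. Since $z$ depends only on $r$, this formula is all that is required.

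First I would differentiate $z(r)=r^{-\gamma}$, obtaining $z'(r)=-\gamma\, r^{-\gamma-1}$ and $z''(r)=\gamma(\gamma+1)\, r^{-\gamma-2}$. Substituting into the radial formula gives
\[
\Lap z = \gamma(\gamma+1)\,r^{-\gamma-2} - (N-1)\gamma\, r^{-\gamma-2} = \gamma(\gamma+2-N)\,r^{-\gamma-2}.
\]
Factoring out the overall sign yields $-\Lap z = \gamma(N-2-\gamma)\,r^{-\gamma-2}$, and rewriting $r^{-\gamma-2}=r^{-\gamma}/r^{2}=z(x)/\abs{x}^{2}$ produces exactly \eqref{eqn:lemmaz0}.

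There is essentially no obstacle here: the statement is a one-line calculation once the radial Laplacian formula is invoked, and the hypothesis $0<\gamma<1$ plays no role in the identity itself (it will matter only later, as it guarantees that the prefactor $\gamma(N-2-\gamma)$ is positive when $N\geq 3$, making $z$ a useful super/subsolution near the hole). The only point demanding minor care is the sign bookkeeping when collecting the two contributions coming from $z''$ and from $\frac{N-1}{r}z'$; keeping track of the factor $-\gamma$ produced by the first derivative is all that is needed to arrive at the stated constant.
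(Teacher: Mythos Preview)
Your proposal is correct and follows exactly the approach the paper indicates: the paper merely states that the lemma ``is immediate from the expression for the Laplacian for a radial function,'' and your computation carries out precisely that one-line calculation.
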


\begin{lemma}
	\label{lemma:ZMbis}
Assume $N\geq 3$. Then, for any $0<\gamma<1$, there exists
$\delta,c,m>0$ and a regular  function $Z:\overline{\Omega}\times (0,\infty)
\to \R$ such that,  for  $z(x)=\frac{1}{\abs{x}^\gamma}$ as in Lemma \ref{lemma:z}, we have 
	\begin{enumerate}
		\item $Z(x,t) > 0$ for $x\in \overline{\Omega}$ and $t
                  >0$. 
		\item $\lim_{t\to\infty} t^{\frac{N}{2}}Z(x,t)=0$
                  uniformly in $\overline{\Omega}$. 
		\item $\frac{\partial Z}{\partial
                    n}(x,t)\geq\frac{m}{1+t^{N/2+1}}$ for every $x\in
                  \partial \Omega$ and $t>0$. 
		\item $Z_t(x,t)-\Lap Z(x,t) \geq c
                  t^{-\frac{N+\gamma}{2}} \dfrac{z(x)}{\abs{x}^{2}}$
                  for $(x,t)$ such that $\abs{x}^2\leq \delta t$.
                \end{enumerate}
\end{lemma}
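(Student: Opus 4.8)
The plan is to use a separated ansatz
\begin{equation*}
Z(x,t)\defeq t^{-\frac{N+\gamma}{2}}\bigl(z(x)+B\,h(x)\bigr),
\end{equation*}
where $z(x)=\abs{x}^{-\gamma}$ is the function from Lemma \ref{lemma:z}, $h$ is a nonnegative bounded harmonic corrector on $\adh{\Omega}$, and $B,\delta>0$ are constants fixed at the end. The time weight $t^{-(N+\gamma)/2}$ is forced by three competing demands: it must decay faster than $t^{-N/2}$ (for (ii)), no faster than $t^{-(N/2+1)}$ (for (iii)), and it must reproduce exactly the power on the right of (iv). Since $0<\gamma<1$, the exponent $\tfrac{N+\gamma}{2}$ lies strictly between $\tfrac N2$ and $\tfrac N2+1$, which is precisely why one single weight can serve all three.

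First I would treat the purely radial part $Z_0=t^{-(N+\gamma)/2}z$. Property (i) is immediate, and (ii) holds because on $\adh{\Omega}$ one has $\abs{x}\geq d_0\defeq d(0,\partial\Omega)>0$, so $t^{N/2}Z_0=t^{-\gamma/2}\abs{x}^{-\gamma}\leq d_0^{-\gamma}t^{-\gamma/2}\to 0$ uniformly. For (iv), Lemma \ref{lemma:z} gives
\begin{equation*}
(Z_0)_t-\Lap Z_0=t^{-\frac{N+\gamma}{2}}\frac{z(x)}{\abs{x}^2}\Bigl[\gamma(N-2-\gamma)-\tfrac{N+\gamma}{2}\,\tfrac{\abs{x}^2}{t}\Bigr],
\end{equation*}
and on $\{\abs{x}^2\leq\delta t\}$ the bracket is at least $\gamma(N-2-\gamma)-\tfrac{N+\gamma}{2}\delta$, which is positive once $\delta$ is small (here $N-2-\gamma>0$ since $N\geq 3$ and $\gamma<1$). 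So $Z_0$ already satisfies (iv) with $c=\tfrac12\gamma(N-2-\gamma)$.

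The genuine difficulty is (iii). For $Z_0$ the normal derivative is $t^{-(N+\gamma)/2}\partial_n z$ with $\partial_n z=-\gamma\abs{x}^{-\gamma-2}(x\cdot n)$, which need not be positive on $\partial\Omega$ when the hole is not star-shaped with respect to $0$; thus $Z_0$ alone cannot produce a positive lower bound. This is exactly where the corrector enters, and I expect it to be the main obstacle. A clean choice is $h=1-\Phi^0$, the complement of the Dirichlet asymptotic profile: it is harmonic, satisfies $0\leq h\leq C\abs{x}^{-(N-2)}$ by \eqref{eqn:boundphi}, and, since $\Phi^0$ vanishes on $\partial\Omega$ and is positive inside, Hopf's lemma plus compactness of $\partial\Omega$ yield $\partial_n h=-\partial_n\Phi^0\geq c_1>0$ there. (Equivalently, $h$ may be taken as the decaying solution of the exterior Neumann problem $\Lap h=0$, $\partial_n h=1$, which is nonnegative by the maximum principle.)

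With $h$ in hand I would fix the constants in order. Since $\partial_n z\geq -K_0$ on the compact set $\partial\Omega$ for some $K_0\geq 0$, choosing $B$ so that $Bc_1\geq K_0+1$ gives $\partial_n(z+Bh)\geq 1$, whence $\partial_n Z\geq t^{-(N+\gamma)/2}\geq \tfrac{m}{1+t^{N/2+1}}$ for a suitable $m>0$ (the last inequality because $\tfrac{N+\gamma}{2}<\tfrac N2+1$), proving (iii). Because $h$ is harmonic it contributes nothing to $\Lap Z$, and its only effect on (iv) is the lower-order term $-\tfrac{N+\gamma}{2}Bt^{-\frac{N+\gamma}{2}-1}h$; using $h\leq C_1\abs{x}^{-\gamma}$ (valid since $N-2>\gamma$) together with $t^{-1}\leq\delta\abs{x}^{-2}$ on $\{\abs{x}^2\leq\delta t\}$, this term is bounded by $\mathrm{const}\cdot B\,\delta\, t^{-\frac{N+\gamma}{2}}\tfrac{z}{\abs{x}^2}$. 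Since $B$ is already fixed, a final shrinking of $\delta$ keeps the bracket in (iv) positive, while (i) and (ii) are untouched because $z+Bh$ is still positive and bounded on $\adh{\Omega}$. The two constant choices do not interfere precisely because $h$'s contribution to (iv) is of lower order in $\delta$, so $B$ is chosen first for (iii) and $\delta$ small afterwards for (iv).
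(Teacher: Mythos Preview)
Your proposal is correct and follows essentially the same approach as the paper: the paper also takes $Z(x,t)=t^{-(N+\gamma)/2}\bigl(z(x)+\kappa\Psi(x)\bigr)$ with $\Psi=1-\Phi^0$, uses Hopf's lemma on $\Psi$ to fix $\kappa$ for (iii), and then shrinks $\delta$ using the bound $\Psi\leq C z$ (valid since $N-2\geq 1>\gamma$) to secure (iv). Your exposition of why the single time weight $t^{-(N+\gamma)/2}$ accommodates all three requirements, and why $B$ must be chosen before $\delta$, is more explicit than the paper's but the argument is the same.
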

\begin{proof}
Let $\Phi^0$ be the asymptotic profile of $\Omega$ for Dirichlet
boundary conditions and define $\Psi\defeq 1-\Phi^0 \geq 0$.  Since all the
points of the boundary are maximum points for $\Psi$, by  Hopf lemma we
have that $\restr{\frac{\partial \Psi}{\partial n}}{\partial \Omega}
>0$. 
	
Now let $0<\gamma< 1$ and  define
\begin{displaymath}
Z(x,t)=t^{-\frac{N+\gamma}{2}}(z(x)+\kappa \Psi(x)) >0
\end{displaymath}
where $\kappa>0$ is to be chosen below. Then $Z$ satisfies (i) and (ii) since
$t^{\frac{N}{2}}Z(x,t)=t^{-\frac{\gamma}{2}}(z(x)+\kappa\Psi(x))$ and $(z(x)+\kappa\Psi(x))$ is bounded in $\Omega$.

Now, we  can choose $\kappa$ sufficiently large such that
$\frac{\partial}{\partial n}(z(x)+\kappa \Psi(x)) > m>0$ for
$x\in\partial \Omega$. Therefore $\restr{\frac{\partial Z}{\partial
    n}}{\partial \Omega} \geq \frac{m}{1+t^{N/2+1}}$ because $\gamma<
1$, so $Z$ satisfies (iii). 
	
	Now, let us check (iv).  Using Lemma \ref{lemma:z} and the definition of $\Psi$ we have 
	\begin{equation}
		Z_t-\Lap Z = t^{-\frac{N+\gamma}{2}} \left(\gamma(N-2-\gamma)\dfrac{z(x)}{\abs{x}^{2}}-\left(\frac{N+\gamma}{2}\right)\frac{z(x)+\kappa \Psi(x)}{t}\right).
	\end{equation}
	As $\gamma<1$ and $N\geq 3$, we have $C=\gamma(N-2-\gamma)>0$. Therefore, denoting $D=\frac{N+\gamma}{2}$,
	\begin{equation}
		Z_t-\Lap Z = t^{-\frac{N+\gamma}{2}} (C\dfrac{z(x)}{\abs{x}^{2}}-D\frac{z(x)+\kappa \Psi(x)}{t}).
	\end{equation}
	From the estimates on $\Phi^{0}$  of Proposition
        \ref{prop:estpro}, as $\gamma<1$ and $N\geq 3$, then $N-2 \geq
        1 > \gamma$ and we have that there exists a
        $C_2>0$ such that $\Psi(x) \leq C_2 z(x)$ for every $x\in
        \Omega$. Thus, choosing $D_2=D(1+\kappa C_2)$, 
	\begin{equation}
		Z_t-\Lap Z \geq  t^{-\frac{N+\gamma}{2}} (C\dfrac{1}{\abs{x}^{2}}-D_2\frac{1}{t})z(x).
	\end{equation}
	Hence, choosing $\delta>0$ sufficiently small such that
        $c=C-\delta D_2>0$, for $\abs{x}^2\leq \delta t$ we have 
	\begin{equation}
		Z_t-\Lap Z \geq c t^{-\frac{N+\gamma}{2}}
                \dfrac{z(x)}{\abs{x}^{2}}. 
	\end{equation}
\end{proof}

Now we prove the asymptotic result close to the hole. For this we
will use comparison principle in variable domains, Theorem 
\ref{thm:compvarneumann},  with suitable sub and supersolutions
constructed with the help of $Z$ in Lemma \ref{lemma:ZMbis}. Notice
that property (iii) in Lemma \ref{lemma:ZMbis} will be used to cope
with $\theta$--boundary conditions other than Dirichlet. 

\begin{theorem} [{\bf Behaviour near the hole}]
  \label{thm:asymcloseR}
  
Let $N\geq 3$, $u_0\in
L^1(\Omega)$ and $u(x,t)=S^\theta(t)u_0(x)$ the solution of the heat
equation with homogeneous $\theta-$boundary conditions on $\partial
\Omega$. Then, there exists $\delta>0$ such that 
	\begin{equation}
		\lim_{t\to\infty}t^{\frac{N}{2}}\norm{u(\cdot,t)-m_{u_0}\Phi^\theta(\cdot)
                  G( \cdot ,t)}_{L^\infty(\{\abs{x}^2\leq \delta t\})}=0,
	\end{equation}
where $m_{u_0}=\int_\Omega \Phi^\theta(x) u_0(x)dx$ is the asymptotic
mass of $u$. Hence, the solution behaves near the hole as a Gaussian
with the asymptotic mass times the asymptotic profile $\Phi^\theta$. 
\end{theorem}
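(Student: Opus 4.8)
The plan is to prove the two-sided estimate by a comparison argument on the time-dependent region $Q\defeq\{(x,t):x\in\Omega,\ \abs{x}^2\leq\delta t\}$, trapping $u$ between explicit super- and subsolutions built around the ansatz $v\defeq m_{u_0}\Phi^\theta G$. First I would reduce matters to estimating $w\defeq u-v$. A direct computation using $\Lap\Phi^\theta=0$ and $G_t-\Lap G=0$ gives the interior defect
\begin{equation}
v_t-\Lap v = -2m_{u_0}\nabla\Phi^\theta\cdot\nabla G ,
\end{equation}
while, since $B_\theta(\Phi^\theta)=0$, on $\partial\Omega$ one finds
\begin{equation}
B_\theta(v) = m_{u_0}\sin(\tfrac{\pi}{2}\theta)\,\Phi^\theta\,\frac{\partial G}{\partial n}.
\end{equation}
Using $\nabla G=-\frac{x}{2t}G$ together with the profile estimates of Proposition \ref{prop:estpro} ($\abs{\nabla\Phi^\theta}\leq C\abs{x}^{-(N-1)}$ and $1-\Phi^\theta\leq C\abs{x}^{-(N-2)}$), both defects are $O(t^{-N/2-1})$ in time, the interior one carrying the extra spatial singularity $\abs{x}^{-(N-2)}$ and the boundary one staying bounded on the compact set $\partial\Omega$.

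The function $Z$ from Lemma \ref{lemma:ZMbis} is tailored to absorb exactly these two defects, and I would take as supersolution
\begin{equation}
\bar u \defeq (m_{u_0}+C\eps)\,\Phi^\theta G + KZ ,
\end{equation}
and symmetrically $\underline u\defeq (m_{u_0}-C\eps)\Phi^\theta G - KZ$, for constants $C,K>0$ and a small parameter $\eps>0$ fixed below. Property (iv) of $Z$, namely $Z_t-\Lap Z\geq c\,t^{-(N+\gamma)/2}\abs{x}^{-\gamma-2}$ on $\{\abs{x}^2\leq\delta t\}$, matches its $\abs{x}^{-(\gamma+2)}$ singularity against the $\abs{x}^{-(N-2)}$ singularity of the interior defect; choosing $0<\gamma\leq (N-2)/2$ one checks that $K(Z_t-\Lap Z)$ dominates $\abs{2(m_{u_0}+C\eps)\nabla\Phi^\theta\cdot\nabla G}$ throughout $Q$ once $K$ is large, so $\bar u$ is a supersolution of the heat equation there. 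Property (iii), $\partial Z/\partial n\geq m(1+t^{N/2+1})^{-1}$, supplies a positive boundary flux of the correct order $t^{-N/2-1}$ to beat $B_\theta(v)$; hence for $K$ large $B_\theta(\bar u)\geq 0=B_\theta(u)$ on $\partial\Omega$ in every (Dirichlet, Robin or Neumann) component. This is precisely where the $\theta$-boundary conditions other than Dirichlet enter.

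It remains to control $\bar u$ on the outer parabolic boundary $\{\abs{x}^2=\delta t\}$ and on an initial slice $\{t=t_0\}$, and here I would invoke the far-field result. By Theorem \ref{thm:asymfarR} and $1-\Phi^\theta\leq C\abs{x}^{-(N-2)}$, on $\{\abs{x}^2=\delta t\}$ one has $t^{N/2}\abs{w}\leq 2\eps$ for $t$ large; since there $\abs{x}=\sqrt{\delta t}\to\infty$ forces $\Phi^\theta\geq\tfrac12$ and $\Phi^\theta G\geq c_\delta t^{-N/2}$ for some $c_\delta>0$, the inflated term $C\eps\,\Phi^\theta G$ alone dominates this error once $C$ is chosen large, giving $\bar u\geq u$ on the outer boundary. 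On the fixed slice $t=t_0$ (taken large enough that $\delta t_0>\max_{\partial\Omega}\abs{x}^2$ and the far-field estimate holds), positivity and compactness make $KZ(\cdot,t_0)$ dominate $u(\cdot,t_0)$ for $K$ large. The comparison principle in variable domains, Theorem \ref{thm:compvarneumann}, then yields $\underline u\leq u\leq\bar u$ on $Q$, whence $\abs{w}\leq C\eps\,\Phi^\theta G + KZ$. Multiplying by $t^{N/2}$, using $t^{N/2}\Phi^\theta G\leq(4\pi)^{-N/2}$ and property (ii) of $Z$ (that $t^{N/2}Z\to0$ uniformly), I obtain $\limsup_{t\to\infty}t^{N/2}\norm{w}_{L^\infty(\{\abs{x}^2\leq\delta t\})}\leq C(4\pi)^{-N/2}\eps$, and letting $\eps\to0$ closes the argument.

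The main obstacle is the matching on the outer boundary: Theorem \ref{thm:asymfarR} controls $u-m_{u_0}G$ only up to an unquantified $o(t^{-N/2})$, which decays too slowly to be absorbed by $Z$ (of order $t^{-N/2-\gamma}$). The resolution — inflating the mass by $\pm C\eps$ so that the self-similar term $C\eps\,\Phi^\theta G$, itself of order $t^{-N/2}$ on $\{\abs{x}^2=\delta t\}$, swallows the error — is the key point, and it must be reconciled with keeping $\bar u$ a supersolution in the interior and compatible with the $\theta$-flux on $\partial\Omega$; this is what fixes the admissible range of $\gamma$ and the order in which $\gamma,\delta,C,t_0,K$ are selected.
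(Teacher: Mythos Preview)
Your proposal is correct and follows essentially the same approach as the paper: build barriers of the form $\Phi^\theta G$ plus a multiple of the auxiliary function $Z$ from Lemma~\ref{lemma:ZMbis}, verify super/subsolution in $\{\abs{x}^2\leq\delta t\}$ via property~(iv), handle $\theta$-boundary conditions via property~(iii), match on the outer boundary using Theorem~\ref{thm:asymfarR} together with an $\varepsilon$-inflation of the mass, and fix the initial slice by enlarging the $Z$-coefficient. The paper writes the barrier as $(1+2\varepsilon)m_{u_0}\bigl(\Phi^\theta G+\sigma Z\bigr)$ rather than your additive form $(m_{u_0}+C\varepsilon)\Phi^\theta G+KZ$, and it takes $\gamma<\tfrac12$ where you take $\gamma\leq\tfrac{N-2}{2}$, but these are cosmetic differences leading to the same comparison.
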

\begin{proof}
We define 
	\begin{equation}
		\label{thm:asymcloseeqn-2}
		v^+(x,t)\defeq \Phi^\theta (x) G(x, t)+\sigma
                Z(x,t) \geq 0 
	\end{equation}
	where $Z(x,t)$ and $\delta>0$ are as in Lemma
        \ref{lemma:ZMbis} and now we choose  $\gamma<1/2$. Let us see
        that $v^+$ is a supersolution of the heat equation in some
        variable domain. First, from Lemma \ref{lemma:ZMbis} and using
        that $\Phi^{\theta}$ is harmonic and $G$ satisfies the heat
        equation, we get 
	\begin{equation}
		\label{thm:asymcloseeqn-1bis}
		\frac{\partial v^+}{\partial t}-\Lap v^+ \geq c\sigma
                t^{-\frac{N+\gamma}{2}} \dfrac{z(x)}{\abs{x}^{2}} -
                2\abs{\nabla \Phi^\theta}\abs{\nabla G(\cdot, t)}.
	\end{equation}
	Now, from the explicit form of $G(x ,t)$, we have that, for $\abs{x}^2\leq \delta t$,
	\begin{equation}	
		\label{thm:asymcloseeqn1bis}
		\abs{\nabla G(x ,t)}\leq \frac{C}{t^{\frac{N+1}{2}}}.
	\end{equation}
	Furthermore, from the estimates of the asymptotic profile $\Phi^\theta(x)$ of Proposition \ref{prop:estpro} we have
	\begin{equation}
		\label{thm:asymcloseeqn2bis}
		\abs{\nabla \Phi^\theta(x)}\leq \frac{C}{\abs{x}^{N-1}}.
	\end{equation}
	Therefore, combining \eqref{thm:asymcloseeqn-1bis}, \eqref{thm:asymcloseeqn1bis} and \eqref{thm:asymcloseeqn2bis} and using that $\abs{x}^2\leq \delta t$,
	\begin{equation}
		\begin{aligned}
			\frac{\partial v^+}{\partial t}-\Lap v^+ & \geq \frac{c\sigma t^{-\frac{N+\gamma}{2}}}{\abs{x}^{2+\gamma}}-\frac{Ct^{-\frac{N+1}{2}}}{\abs{x}^{N-1}}
			 =t^{-\frac{N+\gamma}{2}}\left(\frac{c\sigma}{\abs{x}^{2+\gamma}}-\frac{C }{t^{\frac{1-\gamma}{2}}\abs{x}^{N-1}}\right) \\
			 & \mygeq{$\abs{x}^2\leq \delta t$} t^{-\frac{N+\gamma}{2}}\left(\frac{c\sigma}{\abs{x}^{2+\gamma}}-\frac{C\delta^{\frac{1-\gamma}{2}}}{\abs{x}^{N-\gamma}}\right).
		\end{aligned}
	\end{equation}
Now, as $\gamma<1/2$, we have $(N-\gamma)-(2+\gamma)=N-2-2\gamma> 0$,
and then, as $0\in \mathring{\hole}$, we have $\abs{x}^{-(2+\gamma)}\geq
C\abs{x}^{-(N-\gamma)}$ for $C>0$ large enough and for all $x\in \Omega$. Therefore, we can
choose $\sigma$ large enough so that 
	\begin{equation}
		\label{thm:asymcloseeqn9}
		\frac{\partial v^+}{\partial t}-\Lap v^+ \geq 0 \qquad \abs{x}^2\leq \delta t, \ t>0.
	\end{equation}
	Therefore, $v^+$ is a supersolution in the region
        $\abs{x}^2\leq \delta t$.

        Now, we will show that we can
        compare $v^+$ and $u$ at $\partial \Omega$ and at 
        $\abs{x}^2=\delta t$ for $t\geq T$ large enough. First, at
        $\partial \Omega$, we prove that, choosing $\sigma>0$ sufficiently
        large, 
	\begin{equation}
		\label{thm:asymcloseeqn6bisbis}
		B_\theta(v^+)(x,t) > 0 = B_\theta(u)(x,t) \qquad
                x\in\partial\Omega, \quad   t>0.
	\end{equation} 
	Indeed,
	\begin{equation}
		\label{thm:asymcloseeqn7}
		B_\theta(v^+)=B_\theta(\Phi^\theta) G(\cdot, 0)
                +\sin(\frac{\pi}{2}\theta)\Phi^\theta\frac{\partial
                  G(\cdot, 0) }{\partial n}+\sigma B_\theta(Z). 
	\end{equation}
	Now, $B_\theta(\Phi^\theta)=0$  and, since  $0\notin \partial \Omega$, 
	\begin{equation}
		\abs{\frac{\partial G(\cdot, 0)}{\partial
                    n}(x,0,t)}\leq \abs{\nabla G(x ,t)} \leq
                \frac{\abs{x}e^{-\frac{\abs{x}^2}{4t}}}{2t(4\pi
                  t)^{N/2}}\leq \frac{C}{1+t^{N/2+1}} \qquad x\in
                \partial \Omega , \quad t>0 . 
	\end{equation}
Then in the Dirichlet part of the boundary where $\theta =0$ we have, by
(i) in Lemma \ref{lemma:ZMbis}, 
$B_\theta(v^+) = \sigma B_\theta(Z) >0$. On the rest of the boundary,
using property (iii) from Lemma \ref{lemma:ZMbis}, we have 
	\begin{equation}
B_\theta(v^+) \geq  \Big( \sigma   \sin(\frac{\pi}{2}\theta) m -C\Big)
\frac{1}{1+t^{N/2+1}} >0  
	\end{equation}
provided  $\sigma$ is large enough, because, as $\partial \Omega$ is compact and $\theta\in C^{1,\alpha}(\partial\Omega)$, $\sin(\frac{\pi}{2}\theta)$ is bounded below by a positive constant in each connected component of $\partial \Omega$ in which we do not have Dirichlet conditions.

Now, let us compare $v^+$ and $u$ at $\abs{x}^2=\delta t$. Given
$\varepsilon>0$, we use  Theorem \ref{thm:asymfarR} with  $\delta$ from
Lemma \ref{lemma:ZMbis}  and then, for sufficiently large $T>0$, 
	\begin{equation}
		\label{thm:asymcloseeqn5bisbis}
		u(x,t)\leq (1+\varepsilon)m_{u_0}G(x ,t) \qquad for \ \abs{x}^2=\delta t, \ \  t\geq T.
	\end{equation}
	Then, as $\Phi^\theta(x)\to 1$ when $\abs{x}\to \infty$ and $\sigma Z(x,t)\geq 0$, choosing $T$ large enough we have that
\begin{equation}
	\label{thm:asymcloseeqn6}
	 u(x,t)<(1+2\varepsilon)m_{u_0} \Phi^\theta (x) G(x,t) \leq
         (1+2\varepsilon)m_{u_0} v^+(x,t)  \qquad for \ \abs{x}^2=\delta t, \
         \  t\geq T
\end{equation} 
for any value of $\sigma>0$.

Finally, to be able to use Theorem \ref{thm:compvarneumann} with $v^+$
and $u$, we need to compare the functions at some fixed time $T$. To
do that, observe that given $T>0$ as above, since $u(\cdot, T)$ is
bounded and $Z$ is strictly positive  if $\abs{x}^2\leq \delta T$, we can then  fix $\sigma$ large enough so that 
	\begin{equation}
		\label{thm:asymcloseeqn4bisbis}
		u(x,T)<(1+2\varepsilon)m_{u_0}v^+(x,T) \qquad \forall \abs{x}^2\leq \delta T.
	\end{equation}

Now, from \eqref{thm:asymcloseeqn9}, \eqref{thm:asymcloseeqn6bisbis},
\eqref{thm:asymcloseeqn6} and  \eqref{thm:asymcloseeqn4bisbis} we can
apply Theorem \ref{thm:compvarneumann} with $t_1=T$, any $t_2\geq t_1$, $\Omega_{[T,t_2]}\defeq \{x\in \Omega, \ t\in[T,t_2]: \abs{x}^2\leq \delta t \}$, $S_2 = \partial\Omega\times[T,t_2]$ and $S_1=\{(x,t) : t\in [T,t_2], \ \abs{x}^2= \delta t\}$.
to obtain 
	\begin{equation}
		\label{thm:asymcloseeqn7bisbis}
		u(x,t)\leq (1+2\varepsilon)m_{u_0}v^+(x,t) \qquad \forall x\in \Omega \ \ \abs{x}^2\leq \delta t \ \ \ \forall t \in [T,t_2].
	\end{equation} 

In addition, as $t_2$ was arbitrary, \eqref{thm:asymcloseeqn7bisbis}
holds for all $t\geq
T$. Therefore, for $\abs{x}^2\leq \delta t$, $t\geq T$,  
	\begin{equation}
		\begin{aligned}
			 t^{\frac{N}{2}} (u(x,t)-& m_{u_0}\Phi^\theta(x)G(x ,t))  \myleq{\eqref{thm:asymcloseeqn7bisbis}}
			t^{\frac{N}{2}} ((1+2\varepsilon)m_{u_0}v^+(x,t)-m_{u_0}\Phi^\theta(x)G(x ,t)) \\
			& \myeq{\eqref{thm:asymcloseeqn-2}}
			 	 t^{\frac{N}{2}}( 2\varepsilon
                         m_{u_0}\Phi^{\theta}(x) G(x  ,t)+(1+2\varepsilon)m_{u_0}\sigma
                         Z(x,t))  \\ 
			& \leq C\varepsilon m_{u_0}+(1+2\varepsilon)\sigma m_{u_0}t^{\frac{N}{2}}Z(x,t)\to C\varepsilon m_{u_0}
		\end{aligned}
	\end{equation}
	when $t\to\infty$ due to (ii) in  Lemma \ref{lemma:ZMbis}. So,
        as $\varepsilon$ was arbitrary, we have 
	\begin{equation}
		\limsup_{t\to\infty} \left( t^{\frac{N}{2}} \sup_{\{\abs{x}^2\leq \delta t\}}\left\{u(x,t)-m_{u_0}\Phi(x)G(x ,t)\right\}\right)\leq 0.
	\end{equation}

A similar argument can be carried out with a subsolution
\begin{displaymath}
v^-(x,t)\defeq \Phi^\theta (x)G(x ,t)-\sigma Z(x,t)
\end{displaymath}
 to obtain the corresponding  result for $\liminf$. This concludes the proof.
\end{proof}

\begin{remark}
Note that in Theorem \ref{thm:asymcloseR} we are not obtaining the
convergence of the derivatives as in
\eqref{eqn:asymfarRdesiredresult2} in Theorem \ref{thm:asymfarR}. In fact, in general, this result
is not true near the hole due to the boundary conditions. For example, if we consider the case of
Neumann boundary conditions, we have $\frac{\partial u}{\partial n}=0$
on $\partial \Omega$ while $\frac{\partial G}{\partial n}$ is of  order
$t^{-\frac{N+1}{2}}$ so \eqref{eqn:asymfarRdesiredresult2} can not be
true near the hole. 
\end{remark}

Now we can prove the main result in this section which is the
analogous to \eqref{eqn:jlvazquez2}. For this, for $N\geq 3$,  we match the results in
Theorems \ref{thm:asymfarR} 
    and \ref{thm:asymcloseR}.  Observe that here both the
asymptotic mass of the solution and the asymptotic profile of the
problem intervene. 

	\begin{theorem} [{\bf Convergence in the sup norm}]
	\label{cor:completelinfRbis}
	Let $u_0\in L^1(\Omega)$ and 
        $u(x,t)=S^\theta(t)u_0$ be the solution of the heat equation 
 with homogeneous $\theta$-boundary conditions on
 $\partial \Omega$.

Then if $N\geq 3$ or if $N=2$ and
$\theta\not\equiv 1$, that is, except for Neumann boundary conditions, 
	\begin{equation}
		\label{eqn:completelinfRbiseq1}
		\lim_{t\to\infty}t^{\frac{N}{2}}\norm{u(\cdot,t)-m_{u_0}\Phi^\theta (\cdot) G(\cdot ,t)}_{L^\infty(\Omega)}=0,
	\end{equation}
where $m_{u_0}=\int_{\Omega} \Phi^\theta(x)u_0(x)dx$ is the asymptotic
mass. Hence, the solution behaves as a Gaussian times  the asymptotic
mass of the solution and the asymptotic profile. 

\end{theorem}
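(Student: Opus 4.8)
The plan is to separate the degenerate case $N=2$ with non-Neumann conditions, where $\Phi^\theta\equiv 0$ and hence $m_{u_0}=0$, from the main case $N\geq 3$. In the degenerate case the claim reduces to $\lim_{t\to\infty}t^{N/2}\norm{u(\cdot,t)}_{L^\infty(\Omega)}=0$. Here I would use the semigroup property, writing $u(\cdot,t)=S^\theta(t/2)u(\cdot,t/2)$ and applying the $L^1$–$L^\infty$ smoothing estimate \eqref{eqn:LpLq_estimates_theta} to obtain $t^{N/2}\norm{u(\cdot,t)}_{L^\infty(\Omega)}\leq C\norm{u(\cdot,t/2)}_{L^1(\Omega)}$. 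Since $m_{u_0}=0$ in this case, Theorem \ref{thm:asymL1Rbis} gives $\norm{u(\cdot,t/2)}_{L^1(\Omega)}\to 0$, which closes this case at once.

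For $N\geq 3$ the idea is to match the two regional results already established. First I would fix the $\delta>0$ furnished by Theorem \ref{thm:asymcloseR}, so that on the near region $\{\abs{x}^2\leq \delta t\}$ one already has $t^{N/2}\norm{u(\cdot,t)-m_{u_0}\Phi^\theta(\cdot)G(\cdot,t)}_{L^\infty}\to 0$, which is precisely the desired quantity there. With this same $\delta$ I would invoke Theorem \ref{thm:asymfarR} on the complementary far region $\{\abs{x}^2\geq \delta t\}$, which yields $t^{N/2}\norm{u(\cdot,t)-m_{u_0}G(\cdot,t)}_{L^\infty}\to 0$; note that this far-field statement carries the bare Gaussian, without the profile. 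Since $\norm{\cdot}_{L^\infty(\Omega)}$ is the maximum of the two regional sup norms, it remains only to reconcile the two descriptions on the far region.

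The crux, and the main technical point, is to show that replacing $G$ by $\Phi^\theta G$ on the far region is harmless, namely $t^{N/2}\norm{m_{u_0}(1-\Phi^\theta)G(\cdot,t)}_{L^\infty(\{\abs{x}^2\geq \delta t\})}\to 0$. I would use Proposition \ref{prop:estpro}, which gives $0\leq 1-\Phi^\theta(x)\leq C\abs{x}^{-(N-2)}$, together with the pointwise bound $G(x,t)\leq (4\pi t)^{-N/2}$; on the far region $\abs{x}^{-(N-2)}\leq(\delta t)^{-(N-2)/2}$, so the product is controlled by $t^{N/2}\cdot t^{-(N-2)/2}\cdot t^{-N/2}=C\delta^{-(N-2)/2}t^{-(N-2)/2}$, which tends to $0$ precisely because $N\geq 3$. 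A triangle inequality on the far region, adding and subtracting $m_{u_0}G(\cdot,t)$, then combines this bound with Theorem \ref{thm:asymfarR}; taking the maximum with the near-region estimate delivers \eqref{eqn:completelinfRbiseq1}. The only delicate point to watch is that the threshold $\delta$ must be the one dictated by Theorem \ref{thm:asymcloseR} (the near result holds only for a specific, possibly small $\delta$), whereas Theorem \ref{thm:asymfarR} fortunately holds for every $\delta>0$; it is exactly this asymmetry that makes the matching go through.
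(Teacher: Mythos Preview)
Your proposal is correct and follows essentially the same matching strategy as the paper: the $N=2$ case is handled identically via the semigroup property and the $L^1$--$L^\infty$ smoothing, and for $N\geq 3$ both arguments fix the $\delta$ from Theorem~\ref{thm:asymcloseR}, invoke Theorem~\ref{thm:asymfarR} on the far region, and then control the discrepancy $t^{N/2}(1-\Phi^\theta)G$ there by a triangle inequality. The only cosmetic difference is that the paper bounds this correction term qualitatively (using $\Phi^\theta(x)\to 1$ as $\abs{x}\to\infty$ from Proposition~\ref{prop:estpro}), whereas you use the quantitative estimate $1-\Phi^\theta(x)\leq C\abs{x}^{-(N-2)}$ from the same proposition to extract an explicit rate $t^{-(N-2)/2}$; both are valid and amount to the same thing.
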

\begin{proof}
Assume first $N\geq 3$. 
  Taking  $\delta>0$ from Theorem \ref{thm:asymcloseR} in  Theorem
\ref{thm:asymfarR} we get for $\abs{x}^{2}\geq \delta t$, adding and
subtracting $m_{u_0} G(\cdot ,t)$, 
\begin{displaymath}
  t^{\frac{N}{2}}\abs{ u(x,t)-m_{u_0}\Phi^\theta (x) G(x ,t)} \leq
  t^{\frac{N}{2}}\abs{ u(x,t)-m_{u_0} G(x ,t)}  + |m_{u_0}|
  t^{\frac{N}{2}}  G(x ,t) (1-\Phi^\theta (x)) . 
\end{displaymath}
The first term is uniformly small for large times by Theorem
\ref{thm:asymfarR} 
 and
\begin{displaymath}
  |m_{u_0}|   t^{\frac{N}{2}}  G(x ,t) (1-\Phi^\theta (x)) \leq C   |m_{u_0}|   (1-\Phi^\theta (x))
\end{displaymath}
which is as small as we want for all large $t$, since $\abs{x}^{2}\geq
\delta t$ and as $N\geq 3$, $\Phi^\theta(x)\to 1$ as
    $\abs{x}\to \infty$,  see Proposition \ref{prop:estpro}. Then, we just
    combine  Theorems \ref{thm:asymfarR} and \ref{thm:asymcloseR} to
    obtain the  result. 

Now,  if $N=2$ and
$\theta\not\equiv 1$, that is, except for Neumann boundary conditions,
as in Theorem \ref{thm:asymL1Rbis} we have  $\Phi^{\theta} =0$,
$m_{u_{0}}=0$ and $\lim_{t \to \infty} S^{\theta}(t) u_{0} = 0$
  in  $L^{1}(\Omega)$. Then using Corollary \ref{cor:LpLq_estimates}
  and the semigroup property, we get
  \begin{displaymath}
   \big(\frac{t}{2}\big)^{\frac{N}{2}}  \norm{S^\theta(t)u_0}_{L^{\infty}(\Omega)}\leq
     C\norm{S^\theta(\frac{t}{2}) u_0}_{L^{1}(\Omega)} \to 0
  \end{displaymath}
as $t\to \infty$, which proves \eqref{eqn:completelinfRbiseq1}. 
  \end{proof}

The following result shows the optimality of Theorem
\ref{cor:completelinfRbis}.

	\begin{theorem}
          \label{thm:optimal_rate_Linfty}
          
	Let $g:[0,\infty) \to (0,1]$ a monotonically decreasing continuous
	function such that $\lim_{t\to\infty}g(t)=0$. Then, for any homogeneous
	$\theta-$boundary condition,  there exist an
	initial value $u_0\in L^1(\Omega)$, with $\norm{u_0}_{L^1(\Omega)}=1$,
	and a sequence of times $t_n\to \infty$ such that 
	\begin{equation}
		\label{eqn:opteq1}
		t_n^{N/2}\norm{S^\theta(t_n) u_0-m_{u_0}\Phi^\theta(\cdot)G(\cdot ,t_n)}_{L^\infty(\Omega)}\geq g(t_n).
	\end{equation}
\end{theorem}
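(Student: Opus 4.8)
The plan is to mirror the construction used for the $L^1$ optimality in Theorem \ref{thm:optimal_rate_L1}, producing a single initial datum whose solution concentrates, at a carefully chosen sequence of times, in a region far from the hole where the correction $m_{u_0}\Phi^\theta G(\cdot,t_n)$ is negligible. The essential difference with the $L^1$ case is that here one can only hope for a \emph{sequence} of times rather than all large $t$: the rescaled sup-norm of a spreading bump of initial width $R$ is only of the maximal order $t^{-N/2}$ while $t$ is comparable to $R^2$, and decays faster once the bump has diffused. This scale-matching is exactly why the statement asks for $t_n\to\infty$ and not for all $t>T$.

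Concretely, let $\lambda>0$ and $\psi\ge 0$ be the first Dirichlet eigenvalue and eigenfunction of the unit ball $B$, normalized by $\|\psi\|_{L^1(B)}=1$, and set $\psi_{\max}=\|\psi\|_{L^\infty(B)}=\psi(0)$. As in Theorem \ref{thm:optimal_rate_L1} I take
$$u_0(x)=\sum_{n\ge 1}\frac{1}{2^n}R_n^{-N}\psi\Big(\frac{x-x_n}{R_n}\Big),$$
supported in disjoint balls $B(x_n,R_n)\subset\Omega$ far from the hole, so that $\|u_0\|_{L^1(\Omega)}=1$ and $0\le m_{u_0}\le 1$. Using $S^\theta(t)\ge S^0(t)$ from \eqref{eq:comparison_kernels_theta}, domain monotonicity $S^0(t)\ge S^0_{B(x_n,R_n)}(t)$ (Theorem \ref{thm:compdom}), and the fact that the rescaled $\psi$ is the ground state in $B(x_n,R_n)$, I obtain the pointwise lower bound
$$u(x,t)=S^\theta(t)u_0(x)\ \ge\ \frac{1}{2^n}R_n^{-N}e^{-\lambda t/R_n^2}\psi\Big(\frac{x-x_n}{R_n}\Big),\qquad x\in B(x_n,R_n).$$
Evaluating at the centre $x=x_n$ and writing $s=t_n/R_n^2$ gives $t_n^{N/2}u(x_n,t_n)\ge \psi_{\max}2^{-n}s^{N/2}e^{-\lambda s}$; choosing $R_n$ proportional to $\sqrt{t_n}$ fixes $s$ at a value $s_0$ and yields $t_n^{N/2}u(x_n,t_n)\ge \kappa\,\psi_{\max}2^{-n}$ with $\kappa=s_0^{N/2}e^{-\lambda s_0}>0$ depending only on $N,\lambda$.

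It remains to fix the parameters $t_n$ and $x_n$. I choose the times by $g(t_n)=\tfrac12\kappa\psi_{\max}2^{-n}$, solvable for all large $n$ by continuity and strict monotonicity of $g$ (for small $n$ the right-hand side may exceed $g(0)$, so the sequence of times is indexed by $n\ge n_0$, while the bumps in $u_0$ are kept for all $n$ and merely contribute nonnegatively). With $t_n$ and $R_n\sim\sqrt{t_n}$ fixed, I push $|x_n|$ out far enough that
$$t_n^{N/2}\,m_{u_0}\Phi^\theta(x_n)G(x_n,t_n)\ \le\ \frac{e^{-(|x_n|-R_n)^2/4t_n}}{(4\pi)^{N/2}}\ \le\ \tfrac14\kappa\psi_{\max}2^{-n},$$
which is possible since $0\le m_{u_0}\Phi^\theta\le 1$ and $G$ decays super-exponentially away from the origin; this also keeps the supports disjoint. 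As the $L^\infty$ norm dominates the value at $x_n$ and $u\ge 0$, combining the last three displays gives
$$t_n^{N/2}\big\|u(\cdot,t_n)-m_{u_0}\Phi^\theta G(\cdot,t_n)\big\|_{L^\infty(\Omega)}\ \ge\ \kappa\psi_{\max}2^{-n}-\tfrac14\kappa\psi_{\max}2^{-n}=\tfrac34\kappa\psi_{\max}2^{-n}\ >\ g(t_n),$$
which is \eqref{eqn:opteq1}. The degenerate case $N=2$, $\theta\not\equiv 1$ is even simpler, since then $\Phi^\theta\equiv 0$ and $m_{u_0}=0$, so the subtracted term vanishes identically and the lower bound on $t_n^{N/2}u(x_n,t_n)$ alone suffices.

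The main obstacle is the coordinated choice of the three scales $t_n$, $R_n$, $|x_n|$: the bump must be wide enough ($R_n\sim\sqrt{t_n}$) that its rescaled peak is of the critical order $t_n^{-N/2}$ precisely at the matched time $t_n$, yet the balls must sit far enough out that their supports are disjoint and the Gaussian is negligible there, all while keeping $\|u_0\|_{L^1(\Omega)}=1$. Once the continuity and monotonicity of $g$ are used to pin down $t_n$, these requirements are independent and compatible, so no genuine tension arises; the only conceptual point is recognizing that scale-matching forces the conclusion to hold along a sequence rather than for every large time, in contrast with the $L^1$ estimate of Theorem \ref{thm:optimal_rate_L1}.
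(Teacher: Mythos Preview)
Your proof is correct, but takes a genuinely different route from the paper's. The paper proves Theorem \ref{thm:optimal_rate_Linfty} by an abstract uniform-boundedness argument: it introduces the linear operators $T(t)u_0 = t^{N/2}(S^\theta(t)u_0 - m_{u_0}G(\cdot,t))$ from $L^1(\Omega)$ to $L^\infty(\Omega)$, shows directly (by placing, for each fixed $t$, a \emph{single} rescaled Dirichlet ground state of width $\sqrt{t}$ far from the hole) that $\|T(t)\|$ is bounded below by a positive constant independent of $t$, and then invokes the Banach--Steinhaus based Proposition \ref{prop:presouplet2} to conclude. By contrast, you build the witness $u_0$ explicitly as an infinite superposition of bumps, exactly in the spirit of the paper's $L^1$ optimality proof (Theorem \ref{thm:optimal_rate_L1}), and read off the sequence $t_n$ directly from $g$. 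Your approach is constructive and entirely elementary; the paper's is shorter once the abstract proposition is in hand and isolates the reusable principle that a uniform lower bound on $\|T(t)\|$ already forces optimality.

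One small slip: you invoke ``strict monotonicity of $g$'', which is not assumed. It is not needed either: continuity of $g$, the intermediate value theorem, and $g(t)\to 0$ already give, for each sufficiently small $c_n = \tfrac12\kappa\psi_{\max}2^{-n}$, some $t_n$ with $g(t_n)=c_n$, and $t_n\to\infty$ follows because $g$ is bounded away from $0$ on compact time intervals.
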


The proof of the theorem is mainly based on the following general
proposition, based on the Banach-Steinhaus theorem.

\begin{proposition}
\label{prop:presouplet2}

Let $\{T(t)\}_{t\geq 0}$ be a family of linear bounded operators between the
Banach spaces  $X$
and $Y$. Assume that the norms of the operators are
globally  bounded below and locally bounded above, that is,
$0<m\leq \norm{T(t)}_{\mathcal{L}(X,Y)}$ for all $t\geq 0$ and  given
$T_0\geq 0$, there exist  some constant  $M(T_0)$, such that
$\norm{T(t)}_{\mathcal{L}(X,Y)}\leq M(T_0)$ for every $t\leq 
T_0$.

Then, for any continuous	function $g:[0,\infty) \to (0,1]$  such
that $\lim_{t\to\infty}g(t)=0$, there exists $u_0\in X$ such that  
	\begin{equation}
		\label{eqn:presouplet2eq1}
		\limsup_{t\to\infty}
                \frac{\norm{T(t)u_0}_{Y}}{g(t)}=\infty . 
	\end{equation}

\end{proposition}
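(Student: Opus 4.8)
The plan is to argue by contradiction and invoke the Banach--Steinhaus (uniform boundedness) theorem. Suppose the conclusion fails, so that for \emph{every} $u_0\in X$ one has $\limsup_{t\to\infty}\norm{T(t)u_0}_Y/g(t)<\infty$. The natural object to introduce is the rescaled family $S(t)\defeq \frac{1}{g(t)}T(t)\in\mathcal{L}(X,Y)$, which is well defined since $g(t)>0$ for all $t\geq 0$. The contradiction hypothesis says exactly that, for each fixed $u_0$, the orbit $\{S(t)u_0\}_{t\geq 0}$ stays bounded in $Y$ for all large $t$.

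First I would upgrade this to genuine pointwise boundedness of $\{S(t)\}_{t\geq 0}$ over the \emph{whole} index set $[0,\infty)$. For a fixed $u_0$ the limsup condition provides a constant $C_{u_0}>0$ and a time $T_{u_0}\geq 0$ with $\norm{S(t)u_0}_Y\leq C_{u_0}$ for all $t\geq T_{u_0}$. On the remaining range $t\in[0,T_{u_0}]$ I would use the local upper bound hypothesis: $\norm{S(t)u_0}_Y\leq \norm{T(t)}_{\mathcal{L}(X,Y)}\,\norm{u_0}_X/g(t)\leq M(T_{u_0})\,\norm{u_0}_X/\min_{[0,T_{u_0}]}g$, where the minimum is strictly positive because $g$ is continuous and strictly positive on the compact interval $[0,T_{u_0}]$. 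Combining the two ranges yields $\sup_{t\geq 0}\norm{S(t)u_0}_Y<\infty$ for every $u_0\in X$.

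Since $X$ is a Banach space, the Banach--Steinhaus theorem then delivers a uniform bound $K\defeq\sup_{t\geq 0}\norm{S(t)}_{\mathcal{L}(X,Y)}<\infty$, that is, $\norm{T(t)}_{\mathcal{L}(X,Y)}\leq K\,g(t)$ for all $t\geq 0$. Letting $t\to\infty$ and using $g(t)\to 0$ forces $\norm{T(t)}_{\mathcal{L}(X,Y)}\to 0$, which contradicts the global lower bound $\norm{T(t)}_{\mathcal{L}(X,Y)}\geq m>0$. Hence the assumed $u_0$ cannot exist for all data, i.e. there is a $u_0\in X$ realising \eqref{eqn:presouplet2eq1}.

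The only genuinely delicate point is the passage from the limsup control, which a priori only restrains large $t$, to pointwise boundedness of $\{S(t)\}$ over all of $[0,\infty)$; this is precisely where the two standing hypotheses are used, the local boundedness of $\norm{T(t)}$ together with the positivity of $g$ on compacts, and without them the argument would break down near $t=0$. Everything else is a direct application of the resonance principle, the global lower bound $m>0$ serving only to convert the uniform estimate $\norm{T(t)}\leq Kg(t)$ into the final contradiction.
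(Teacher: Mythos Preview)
Your proof is correct and follows essentially the same route as the paper: argue by contradiction, use the local boundedness of $\norm{T(t)}$ together with the continuity and positivity of $g$ on compact intervals to upgrade the $\limsup$ control to pointwise boundedness of $T(t)/g(t)$ on all of $[0,\infty)$, apply Banach--Steinhaus, and contradict the lower bound $m$ via $g(t)\to 0$. Your write-up is in fact slightly more explicit about why the extension to $[0,T_{u_0}]$ works.
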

\begin{proof}
	Let us argue by contradiction. Assume that, for every $u_0\in X$,
	\begin{equation}
		\limsup_{t\to\infty}
                \frac{\norm{T(t)u_0}_{Y}}{g(t)}\leq C_{u_0} . 
\end{equation}
In particular, as $\norm{T(t)}_{\mathcal{L}(X,Y)}$ are locally bounded
in $t\geq 0$ and $g$ is continuous, choosing $C_{u_0}$ larger if
necessary, 
	\begin{equation}
		\frac{\norm{T(t)u_0}_{Y}}{g(t)}\leq C_{u_0} \qquad
                \forall t\in [0,\infty) . 
	\end{equation}
	Then, the uniform boundedness principle implies that there exists a $C>0$ such that
	\begin{equation}
		\frac{\norm{T(t)}_{\mathcal{L}(X,Y)}}{g(t)}\leq C \qquad \forall t\in [0,\infty)
	\end{equation}
	But, then as $\lim_{t\to\infty}g(t)=0$, there exists $T>0$ such that
	\begin{equation}
		\norm{T(t)}_{\mathcal{L}(X,Y)}\leq Cg(T)<m
	\end{equation}
	which is a contradiction.
\end{proof}

\begin{proof}[{\bf Proof of Theorem \ref{thm:optimal_rate_Linfty}}] 
	We consider the linear  operators
	\begin{equation}
		\begin{tabular}{@{}cccc@{}}
			$T(t):$ & $L^1(\Omega)$ & $\longrightarrow$ & $L^\infty(\Omega)$ \\
			& $u_0$ & $\mapsto$ & $t^{\frac{N}{2}}\left(S^\theta(t)u_0-m_{u_0}G(\cdot ,t)\right)$ \\
		\end{tabular}	
\end{equation}
 which are also uniformly bounded because, using Corollary
 \ref{cor:LpLq_estimates} 
\begin{equation}
\norm{T(t)u_0}_{L^\infty(\Omega)}\leq
t^{\frac{N}{2}}\norm{S^\theta(t)u_0}_{L^\infty(\Omega)} +
\norm{u_0}_{L^1(\Omega)}t^{\frac{N}{2}} \norm{G(\cdot
  ,t)}_{L^\infty(\Omega)} \myleq{\eqref{eqn:LpLq_estimates_theta}} C
\norm{u_0}_{L^1(\Omega)} . 
\end{equation}
In addition, let us prove that $\norm{T(t)}\geq c$ for some positive
constant $c>0$ independent of $t$. First, we consider $(\lambda,
\psi)$ the first eigenvalue and eigenfunction of $\Lap$ in the unit
ball $B$ with homogeneous Dirichlet boundary conditions with $\psi \geq 0$ and normalized such that
$\norm{\psi}_{L^1(B)}=1$. Then
$\left(\frac{\lambda}{R^2}, R^{-N} \psi\left(\frac{\cdot}{R}\right)\right)$ is 
an eigenvalue-eigenfunction pair of $\Lap$ in $B(0,R)$ with
homogeneous Dirichlet boundary conditions normalized with 
$L^1(B(0,R))$-norm equal to $1$. Now, for fixed  $t>0$, we choose
$R=\sqrt{t}$ and $x_0\in\Omega$ with $\abs{x_0}$ large enough so that 
\begin{equation}
 t^{N/2} G(x_0 ,t)\leq \frac{e^{-\lambda}\psi(0)}{2}
\end{equation}
Then, if we choose
$u_0(x)=\chi_{B(x_0,R)}R^{-N}\psi\left(\frac{x-x_0}{R}\right) \geq 0$, we have
$\norm{u_0}_{L^1(\Omega)}=1$ and using $0<m_{u_{0}}\leq 1$ and comparison Theorems
\ref{thm:neugeqdir} and \ref{thm:compdom},  we obtain 
\begin{equation}
\begin{aligned}
\norm{T(t)u_0}_{L^\infty(\Omega)} & \geq T(t)u_0(x_0)   \geq
t^{N/2}S^\theta(t)u_0 (x_0) - \frac{e^{-\lambda}\psi(0)}{2} \mygeq{Thm
  \ref{thm:neugeqdir}} t^{N/2}S^0(t)u_0 (x_0) -
\frac{e^{-\lambda}\psi(0)}{2}  \\ 
& \mygeq{Thm \ref{thm:compdom}} t^{N/2}S^0_{B(x_0,R)}(t)u_0 (x_0) -
\frac{e^{-\lambda}\psi(0)}{2} = 
e^{-\lambda}\psi(0)-\frac{e^{-\lambda}\psi(0)}{2}=\frac{e^{-\lambda}\psi(0)}{2} 
\end{aligned}	
\end{equation}
where $S^0_{B(x_0, R)}(t)$ above is the heat semigroup in the ball
$B(x_0, R)$ with Dirichlet boundary conditions. 
Therefore, as $\norm{u_0}_{L^1(\Omega)}=1$, we obtain that
$\norm{T(t)}\geq \frac{e^{-\lambda}\psi(0)}{2}$ for every $t\geq
0$.

Hence, we can use Proposition \ref{prop:presouplet2} to $\{T(t)\}$
to obtain the result. 
\end{proof}

\begin{remark}
  \label{rem:Herraiz_result}
  One of the few results in an exterior domain of which we are
  acquainted with  are those in \cite{Herraiz1998}, which describe the behaviour
  of solutions with homogeneous Dirichlet conditions for initial data
  which behaves like $\abs{x}^{-\alpha}$ as $\abs{x}\to\infty$. In
  particular, the following lemma is stated. 

  \begin{lemma}[\cite{Herraiz1998}, Lemma 3.2 a)]
    \label{lemma:herraiz}
    Let $\Omega\subset \RN$ be a regular exterior domain with
    $N\geq 3$. Let $u: \Omega\times [0,\infty) \to \R$ be a solution
    of
    \begin{equation}
      \left\{
        \begin{aligned}
          & u_t(x,t)-\Lap u(x,t) = 0 \qquad (x,t)\in\Omega \times (0,\infty) \\
          & u(x,t)=0 \qquad x\in\partial \Omega ,\ t\in (0,\infty) \\
          & u(x,0)= u_0(x) ,
        \end{aligned}
      \right. 
    \end{equation}
    where $u_0(x)\sim A\abs{x}^{-\alpha}$ as $\abs{x}\to \infty$ for
    some $A>0$ and $\alpha>N$. Then, when $t\gg 1$,
    \begin{equation}
      \label{eqn:herraizexpected}
      u(x,t)= \frac{\Phi^0(x)}{(4\pi t)^{N/2}} \left(\int_\Omega
        u_0(y)dy\right)  e^{-\frac{\abs{x}^2}{4t}} (1+o(1)) \qquad
      x\in\Omega, \ \abs{x}^2\leq Ct\log(t) 
    \end{equation}
    where $\Phi^0(x)$ is the asymptotic profile for homogeneous
    Dirichlet boundary conditions.
  \end{lemma}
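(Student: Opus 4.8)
The plan is to recover the statement directly from the sup-norm asymptotics already proved in Theorem \ref{cor:completelinfRbis}, specialised to Dirichlet boundary conditions ($\theta\equiv 0$, so $\Phi^\theta=\Phi^0$). First I would check that the hypotheses place us in the scope of that theorem: since $u_0(x)\sim A\abs{x}^{-\alpha}$ as $\abs{x}\to\infty$ with $\alpha>N$, and $\int_{\{\abs{x}\geq 1\}}\abs{x}^{-\alpha}\,dx<\infty$ precisely when $\alpha>N$, we have $u_0\in L^1(\Omega)$. Hence the theorem applies and yields
\begin{equation}
t^{\frac{N}{2}}\norm{u(\cdot,t)-m_{u_0}\Phi^0(\cdot)G(\cdot,t)}_{L^\infty(\Omega)}\to 0,\qquad t\to\infty,
\end{equation}
where $m_{u_0}=\int_\Omega\Phi^0(y)u_0(y)\,dy$ is the asymptotic mass.

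Next I would unfold the definition of the Gaussian. Writing $G(x,t)=\frac{e^{-\abs{x}^2/(4t)}}{(4\pi t)^{N/2}}$, the display above reads
\begin{equation}
u(x,t)=\frac{m_{u_0}\Phi^0(x)}{(4\pi t)^{N/2}}\,e^{-\frac{\abs{x}^2}{4t}}+E(x,t),\qquad \sup_{x\in\Omega}t^{N/2}\abs{E(x,t)}=o(1).
\end{equation}
To pass from this additive error to the multiplicative factor $1+o(1)$ claimed in the lemma, I would restrict to the stated region $\abs{x}^2\leq Ct\log t$, where $e^{-\abs{x}^2/(4t)}\geq t^{-C/4}$, so the leading term dominates $E$ away from $\partial\Omega$; since on that region $\Phi^0(x)\to 1$ as $\abs{x}\to\infty$ (Proposition \ref{prop:estpro}) and $\Phi^0$ is bounded below on compact subsets of $\Omega$, dividing through by $\frac{m_{u_0}\Phi^0(x)}{(4\pi t)^{N/2}}e^{-\abs{x}^2/(4t)}$ converts the estimate into the factor $1+o(1)$. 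The delicate point in this step is the behaviour near $\partial\Omega$, where $\Phi^0$ vanishes and the uniform additive bound on $E$ need no longer be small relative to the (also vanishing) leading term; so the multiplicative form is really a pointwise statement for fixed $x\in\Omega$, not a uniform one up to the boundary.

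The main obstacle, and I expect the whole point of the surrounding remark, is the value of the leading constant. Theorem \ref{cor:completelinfRbis} forces the coefficient to be the asymptotic mass $m_{u_0}=\int_\Omega\Phi^0 u_0$, whereas \eqref{eqn:herraizexpected} writes the full initial mass $\int_\Omega u_0$. These differ by $\int_\Omega(1-\Phi^0)u_0$, which is strictly positive for generic $0\leq u_0\in L^1(\Omega)$ because $0\leq\Phi^0<1$ in $\Omega$ by the strong maximum principle; hence the two normalisations are genuinely distinct and cannot both be correct. Thus the honest conclusion of the argument is the lemma with $\int_\Omega u_0$ replaced by $m_{u_0}$, i.e. the correct leading constant in \eqref{eqn:herraizexpected} is the asymptotic mass rather than the total mass — precisely the correction that the mass-loss analysis of this paper supplies.
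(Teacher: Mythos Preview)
The paper does not prove this lemma at all: it is quoted verbatim from \cite{Herraiz1998} inside Remark~\ref{rem:Herraiz_result}, and the remark then argues that the statement is \emph{incorrect} as written, precisely because the coefficient $\int_\Omega u_0(y)\,dy$ should be the asymptotic mass $m_{u_0}=\int_\Omega \Phi^0 u_0$. So there is no ``paper's own proof'' to compare against; rather, both you and the paper are refuting \eqref{eqn:herraizexpected}. Your final paragraph reaches exactly the paper's conclusion.

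The methods of refutation differ. The paper exhibits an explicit radial solution \eqref{eqn:herraizcounterexample} in $\mathbb{R}^3\setminus B(0,1)$ and shows numerically (Figure~\ref{fig:com}) and via a density/perturbation argument that the solution does not match \eqref{eqn:herraizexpected} but does match Theorem~\ref{cor:completelinfRbis}. You instead deduce the correct leading coefficient abstractly from Theorem~\ref{cor:completelinfRbis}. Your route is cleaner and more general (it does not need a lucky explicit solution), while the paper's counterexample is more concrete and visually compelling.

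There is, however, a genuine gap in your conversion from the additive estimate to the multiplicative $(1+o(1))$ on the full region $\abs{x}^2\leq Ct\log t$. Theorem~\ref{cor:completelinfRbis} gives only $t^{N/2}\abs{E(x,t)}=o(1)$ with no rate, whereas on that region the leading term can be as small as $c\,t^{-N/2}\cdot t^{-C/4}$ (since $e^{-\abs{x}^2/(4t)}$ may be of order $t^{-C/4}$). Dividing, the ratio is $o(1)\cdot t^{C/4}$, which need not tend to zero; indeed Theorem~\ref{thm:optimal_rate_Linfty} shows the $o(1)$ can be arbitrarily slow. So your argument only yields the multiplicative form on bounded parabolic regions $\abs{x}^2\leq \delta t$ (where $e^{-\abs{x}^2/(4t)}$ is bounded below), not on the logarithmically growing region claimed in \eqref{eqn:herraizexpected}. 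This does not affect your main point about the wrong constant, but you should not claim to have recovered the $(1+o(1))$ factor uniformly on $\abs{x}^2\leq Ct\log t$ from Theorem~\ref{cor:completelinfRbis} alone.
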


  However, the asymptotic behaviour that Lemma \ref{lemma:herraiz}
  describes seems not to be taking into account the loss of mass
  through the hole as the whole mass $\int_\Omega u_0(y)dy$ appears
  explicitly in the estimate. 
  This can be checked with the explicit radial solution of the heat equation in $\Omega$ with
   Dirichlet boundary conditions when $\Omega\defeq \mathbb{R}^3\backslash B(0,1)$ 
  \begin{equation}
    \label{eqn:herraizcounterexample}
    u(x,t)=e^{-\frac{(\abs{x}-1)^2}{4(t+1)}}\cdot\frac{(\abs{x}-1)}{4\abs{x}(t+1)^{3/2}}
    \qquad x\in \Omega\defeq \mathbb{R}^3\backslash B(0,1), \ \
    t\geq 0 . 
  \end{equation}
  In Figure \ref{fig:com}
  we present a comparison of the exact solution
  \eqref{eqn:herraizcounterexample},  the
  expected asymptotic behaviour in  Theorem
  \ref{cor:completelinfRbis} and the expected behaviour for
  $t=100$ predicted by Lemma \ref{lemma:herraiz},
  \eqref{eqn:herraizexpected}.  The picture depicts the height of the
  functions in terms of the radial coordinate $|x|\geq 1$,  at time $t=100$.
  As the figure shows, the exact solution
  \eqref{eqn:herraizcounterexample} seems not to coincide with the
  behaviour predicted by Lemma \ref{lemma:herraiz}, but rather with the one
  in Theorem \ref{cor:completelinfRbis}.
  \begin{figure}[H]
    \centering \includegraphics[width=0.7\textwidth]{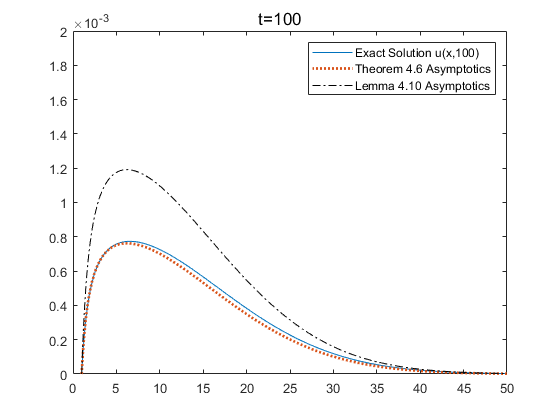}
    \caption{Comparison between  \cite{Herraiz1998} asymptotics and
the one in       Theorem \ref{cor:completelinfRbis}.}
\label{fig:com}
\end{figure}

  Notice that the initial datum for 
  \eqref{eqn:herraizcounterexample} is  
  $u_0(x)=e^{-\frac{(\abs{x}-1)^2}{4}}\cdot\frac{(\abs{x}-1)}{4\abs{x}}$
  which decays faster than the required decay
  $\abs{x}^{-\alpha}$ when $\abs{x}\to\infty$ in Lemma
  \ref{lemma:herraiz}.

  However, a small
  perturbation    $u_0^\varepsilon(x)=u_0(x)+\varepsilon \abs{x}^{-(N+1)}$ is under
  the hypothesis of Lemma \ref{lemma:herraiz}. Hence, Lemma
  \ref{lemma:herraiz} would imply
  that
  $$u_\varepsilon(x,t)=\frac{\Phi^0(x)}{(4\pi
    t)^{N/2}}\left(\int_\Omega u_0^\varepsilon(y)dy\right)
  e^{-\frac{\abs{x}^2}{4t}} (1+o(1))$$ when $t\to\infty$.
  Furthermore, due to \eqref{eqn:LpLq_estimates_theta} we have 
  $$
  \abs{S^{0}(t)u_0^\varepsilon(x)-S^{0}(t)u_0(x)}\leq\norm{u_0^\varepsilon-u_0}_{L^1(\Omega)}t^{-N/2}\leq
  Ct^{-N/2} \varepsilon, \quad x\in \Omega, \ t>0 
  $$
  and  then we would have
$u(x,t)=\frac{\Phi^0(x)}{(4\pi t)^{N/2}}\left(\int_\Omega
  u_0(y)dy\right) e^{-\frac{\abs{x}^2}{4t}} (1+o(1)+O(\varepsilon))$
when $t\to\infty$ and $\abs{x}^2\leq Ct$ which clearly does not happen
as Figure \ref{fig:com} shows.

\end{remark}

\section{Asymptotic behavior in the $L^{p}(\Omega)$ norm}
\label{sec:asympt-behav-Lpnorm}

Finally,  combining Theorem   \ref{cor:completelinfRbis} with Theorem
\ref{thm:asymL1Rbis}, we obtain an asymptotic result in
$L^p(\Omega)$ by interpolation. Observe that this result is the
analogous to Theorem \ref{thm:jlvazquez} in an exterior domain. 

\begin{theorem}
	\label{thm:asympfinal}

Let $u_0\in L^1(\Omega)$ and  $u(x,t)=S^\theta(t)u_0$ be
the solution of the heat equation with homogeneous $\theta$-boundary
conditions on $\partial \Omega$.

Then if $N\geq 3$ or if $N=2$ and
$\theta\not\equiv 1$, that is, except for Neumann boundary conditions,
for  any $1\leq p \leq \infty$, 
\begin{equation}
\lim_{t\to\infty}t^{\frac{N}{2}(1-\frac{1}{p})}\norm{u(\cdot,t)-m_{u_0}\Phi^\theta(\cdot)G(\cdot ,t)}_{L^p(\Omega)}=0,
\end{equation}
where $m_{u_0}=\int_{\Omega} \Phi^\theta(x)u_0(x)dx$ is the asymptotic mass.
\end{theorem}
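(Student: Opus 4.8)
The plan is to deduce the result for $1<p<\infty$ by interpolating between the two endpoint cases $p=1$ and $p=\infty$, which have already been established, after first reconciling the two comparison functions that appear in them. The $p=\infty$ endpoint is exactly Theorem \ref{cor:completelinfRbis}, already stated with the comparison function $m_{u_0}\Phi^\theta(\cdot)G(\cdot,t)$. The $p=1$ endpoint, Theorem \ref{thm:asymL1Rbis}, is instead stated with $m_{u_0}G(\cdot,t)$, without the factor $\Phi^\theta$. So the first step is to upgrade the $L^1$ statement to the same comparison function, namely to show
\begin{equation}
\lim_{t\to\infty}\norm{S^\theta(t)u_0-m_{u_0}\Phi^\theta(\cdot)G(\cdot,t)}_{L^1(\Omega)}=0.
\end{equation}

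By the triangle inequality and Theorem \ref{thm:asymL1Rbis}, it suffices to check that $\norm{(1-\Phi^\theta)G(\cdot,t)}_{L^1(\Omega)}\to 0$. Using the bound $0\leq 1-\Phi^\theta(x)\leq C\abs{x}^{-(N-2)}$ from Proposition \ref{prop:estpro} (valid on $\Omega$, where $\abs{x}$ is bounded away from $0$ since $0\in\mathring{\hole}$) together with the rescaling $x=\sqrt{t}\,y$, and then enlarging the rescaled domain to all of $\RN$ since the integrand is nonnegative, one finds
\begin{equation}
\int_\Omega (1-\Phi^\theta(x))\,G(x,t)\,dx \leq \frac{C}{t^{(N-2)/2}}\int_{\RN}\frac{e^{-\abs{y}^2/4}}{(4\pi)^{N/2}\abs{y}^{N-2}}\,dy .
\end{equation}
The integral on the right is finite, because the singularity $\abs{y}^{-(N-2)}$ is locally integrable in $\RN$ and the Gaussian controls the behaviour at infinity. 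Hence this term decays like $t^{-(N-2)/2}$ and, in particular, tends to zero.

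With both endpoints now expressed through $f(\cdot,t)\defeq u(\cdot,t)-m_{u_0}\Phi^\theta(\cdot)G(\cdot,t)$, we have $\norm{f(\cdot,t)}_{L^1(\Omega)}\to 0$ and $t^{N/2}\norm{f(\cdot,t)}_{L^\infty(\Omega)}\to 0$. For $1<p<\infty$ I would apply the elementary interpolation inequality $\norm{f}_{L^p(\Omega)}\leq \norm{f}_{L^1(\Omega)}^{1/p}\norm{f}_{L^\infty(\Omega)}^{1-1/p}$ and distribute the time weight as $t^{\frac{N}{2}(1-\frac1p)}=(t^{N/2})^{1-1/p}$, obtaining
\begin{equation}
t^{\frac{N}{2}(1-\frac1p)}\norm{f(\cdot,t)}_{L^p(\Omega)}\leq \big(\norm{f(\cdot,t)}_{L^1(\Omega)}\big)^{1/p}\big(t^{N/2}\norm{f(\cdot,t)}_{L^\infty(\Omega)}\big)^{1-1/p}\longrightarrow 0 ,
\end{equation}
since both factors are nonnegative and each tends to zero.

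Finally, in the degenerate case $N=2$ with $\theta\not\equiv 1$, exactly as in Theorem \ref{thm:asymL1Rbis} one has $\Phi^\theta\equiv 0$ and $m_{u_0}=0$, so the claim collapses to $t^{\frac{N}{2}(1-\frac1p)}\norm{S^\theta(t)u_0}_{L^p(\Omega)}\to 0$; this follows from $S^\theta(t)u_0\to 0$ in $L^1(\Omega)$ combined with the $L^1$–$L^p$ smoothing estimate of Corollary \ref{cor:LpLq_estimates} applied through the semigroup splitting $S^\theta(t)=S^\theta(t/2)S^\theta(t/2)$. The only genuine point requiring care is the reconciliation of the two comparison functions, i.e.\ the decay of $\norm{(1-\Phi^\theta)G(\cdot,t)}_{L^1(\Omega)}$; once that is in hand, the interpolation step itself is routine.
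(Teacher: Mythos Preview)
Your proof is correct and follows essentially the same strategy as the paper: establish the $L^1$ endpoint with the comparison function $m_{u_0}\Phi^\theta G$ by controlling $\norm{(1-\Phi^\theta)G(\cdot,t)}_{L^1(\Omega)}$, then interpolate with the $L^\infty$ endpoint from Theorem \ref{cor:completelinfRbis}, and treat $N=2$ separately via the semigroup splitting. The only difference is that for the term $\norm{(1-\Phi^\theta)G(\cdot,t)}_{L^1(\Omega)}$ the paper uses a qualitative $\varepsilon$-splitting argument (choose $R$ so that $1-\Phi^\theta<\varepsilon$ for $\abs{x}>R$, then use the decay of $G$ on the bounded set $\abs{x}\leq R$), whereas you exploit the quantitative bound \eqref{eqn:boundphi} and a scaling change of variables to obtain the explicit rate $t^{-(N-2)/2}$; both arguments are valid.
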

\begin{proof}
Assume first $N\geq 3$.   First we prove the case $p=1$ by using  Theorem \ref{thm:asymL1Rbis}
  and using that, as  $N\geq 3$ then  $\Phi^\theta(x)\to 1$ when
  $\abs{x}\to \infty$, see  Proposition \ref{prop:estpro}. For this,
  notice that adding and subtracting
  $m_{u_0} G(\cdot ,t)$ 
  \begin{displaymath}
    \norm{u(t)-m_{u_0}\Phi^\theta(\cdot)G(\cdot ,t)}_{L^1(\Omega)}
    \leq      \norm{u(t)-m_{u_0} G(\cdot ,t)}_{L^1(\Omega)} +
    \norm{m_{u_0}(1-\Phi^\theta(\cdot))G(\cdot ,t)}_{L^1(\Omega)} . 
  \end{displaymath}
By Theorem \ref{thm:asymL1Rbis}  the  first term goes to zero as $t\to
\infty$ while for the second, given $\eps>0$ we  choose $R>0$ such that for
$\abs{x}\geq R$, $0\leq 1-\Phi^\theta(x)\leq \varepsilon$. Then 
$G(\cdot, t)$  decays in time  to $0$ in $L^1(\Omega\cap B(0,R))$ just
because of the decay in $L^\infty(\Omega)$ and 
$\Omega\cap B(0,R)$ is a bounded set. Therefore, splitting the
integral for $\abs{x}\leq R$ and $\abs{x}\geq R$ we have 
\begin{displaymath}
   \norm{m_{u_0}(1-\Phi^\theta(\cdot))G(\cdot ,t)}_{L^1(\Omega)} \leq
  | m_{u_0} | \norm{ G(\cdot     ,t)}_{L^1(\Omega\cap\{\abs{x}\leq
    R\})} +     |m_{u_0}|\eps \to  |m_{u_0}|\eps 
\end{displaymath}
as $t\to \infty$. Since $\eps>0$ is arbitrary, we get the result for
$p=1$.

Now we denote $f(x,t)=u(x,t)-m_{u_0}\Phi^\theta(x)G(x ,t)$, so we have
already proved that 
$\lim_{t\to\infty}\norm{f(t)}_{L^1(\Omega)}=0$. In addition, by
Theorem \ref{cor:completelinfRbis} we have
$\lim_{t\to\infty}t^{N/2}\norm{f(t)}_{L^\infty(\Omega)}=0$.
Therefore, using interpolation we get 

	\begin{equation}
		\lim_{t\to\infty}t^{\frac{N}{2}(p-1)}\norm{f(t)}_{L^p(\Omega)}^p\leq \lim_{t\to\infty}\left((t^{\frac{N}{2}}\norm{f(t)}_{L^\infty(\Omega)})^{p-1}\norm{f(t)}_{L^1(\Omega)} \right)=0,
	\end{equation}
	which is the  result.

Now,  if $N=2$ and
$\theta\not\equiv 1$, that is, except for Neumann boundary conditions,
as in Theorem \ref{thm:asymL1Rbis} we have  $\Phi^{\theta} =0$,
$m_{u_{0}}=0$ and $\lim_{t \to \infty} S^{\theta}(t) u_{0} = 0$
  in  $L^{1}(\Omega)$. Then using Corollary \ref{cor:LpLq_estimates}
  and the semigroup property, we get
  \begin{displaymath}
   \big(\frac{t}{2}\big)^{\frac{N}{2}(\frac{1}{p}- 1)}  \norm{S^\theta(t)u_0}_{L^{p}(\Omega)}\leq
     C\norm{S^\theta(\frac{t}{2}) u_0}_{L^{1}(\Omega)} \to 0
  \end{displaymath}
as $t\to \infty$, which proves the result. 
      \end{proof}

\section*{Acknowledgments}
We would like to express our sincere gratitude to Fernando Quirós
(UAM, Spain) for
our conversations on the topic of exterior domains and asymptotic
behaviour. His guidance and discussions have greatly contributed to
the development of this work. We are also thankful to José A. Cañizo
and Institute of Mathematics of the University of Granada (IMAG, Spain) for
their warm hospitality during the visit of the first author there. The
engaging discussions on the aforementioned topic as well as on their
work in progress have been helpful for a wider understanding of the
topic covered on this document. In addition, special thanks go to Raúl
Ferreira (UCM, Spain) for his comments on the relationship between asymptotic
profiles and harmonic measure as well as to Pablo Hidalgo-Palencia
(ICMAT, Spain) for
the subsequent discussions and clarifications regarding this matter. I
appreciate the support and collaboration of these individuals, which
have significantly enriched the content and quality of this work.

\appendix

\begin{appendices}
  \section{Comparison Principles}
\label{app:comp}

  Now we present some monotonicity results for the solutions of the
parabolic problems above with
respect to the function $\theta$ and, in the case of Dirichlet
boundary conditions, with respect to the domain $\Omega$. 
But first, we need to define, given some $\theta-$boundary conditions
as in \eqref{eqn:thetabc}, the Dirichlet, and Robin/Neumann part of $\partial \Omega$.
We define the Dirichlet part of $\partial \Omega$ as
\begin{displaymath}
	\partial^D \Omega\defeq \{x \in \partial \Omega \ : \ \theta(x)=0\},
\end{displaymath}
the Robin/Neumann part of $\partial \Omega$ as
\begin{displaymath}
	\partial^R \Omega\defeq \{x \in \partial \Omega \ : \ 0<\theta(x)\leq1\}.
\end{displaymath}	
The conditions imposed on $\theta$ imply that $\partial^D \Omega$ is a
union of connected components of $\partial \Omega$.

Now we present some monotonicity results. The proof of the following
theorem can be found in \cite{DdTRB23}. 

\begin{theorem}
	\label{thm:neugeqdir}
	Let $\Omega\subset\RN$ be a domain with compact boundary and
        let ${u_1}_0, {u_2}_0\in L^{2}(\Omega)$,
        $f_{1}, f_{2} \in L^{1}((0,T), L^{2}(\Omega))$ and
        $g_{1}, g_{2} \in L^{1}((0,T), L^{2}(\partial\Omega))$ with
        $T>0$.  Finally, assume
        $u_1,u_2\in C^1((0,T), H^1_\theta(\Omega))\cap C([0,T],
        L^2(\Omega))$ are such that they are weak solutions of the
        problems
	\begin{displaymath}
          \label{eqn:heatgen2}
          \left\{
            \begin{aligned}
              \frac{\partial}{\partial t}u_i-\Lap u_{i} = f_i \quad & in \ \Omega\times(0,T) \\
              B_\theta(u_i)=g_i \quad & on \
              \partial\Omega\times (0,T) \\
              u_{i}={u_{i,0}} := u_{i}(0) \quad & in \
              \Omega\times\{0\} ,
            \end{aligned}
          \right. 
	\end{displaymath}
	for $i=1,2$, in the sense that $u_{i}= g_{i}$ on
        $\partial^{D}\Omega \times (0,T)$ and for any
        $\varphi\in C([0,T], H^1_\theta(\Omega))$,
	\begin{displaymath}
          \int_\Omega (u_{i})_t\varphi  + \int_\Omega \nabla u_{i} \nabla \varphi +
          \int_{\partial^R \Omega}   \cot(\frac{\pi}{2}\theta) u_{i} \varphi  =
          \int_{
            \partial^R\Omega}  \frac{g_{i}}{\sin(\frac{\pi}{2}\theta)}   \varphi + \int_\Omega f_{i} \varphi  \qquad t
          \in (0,T). 
	\end{displaymath}
	
	Then, if $f_1\geq f_2$, $g_1\geq g_2$ and ${u_{1,0}}\geq
        {u_{2,0}}$, we have
	\begin{displaymath}
          u_1\geq u_2  \qquad x\in \Omega, \ t \in (0,T) . 
	\end{displaymath}
      \end{theorem}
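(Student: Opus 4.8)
The plan is to prove the comparison by a standard energy (Stampacchia-type) argument applied to the negative part of the difference. Set $w \defeq u_1 - u_2$. Subtracting the two weak formulations, $w$ is itself a weak solution of the $\theta$-problem with right-hand side $f_1 - f_2 \geq 0$, boundary datum $g_1 - g_2 \geq 0$ and initial datum $w(0) = u_{1,0} - u_{2,0} \geq 0$; in particular $w = g_1 - g_2 \geq 0$ on $\partial^D\Omega \times (0,T)$. Writing $w^- \defeq \max(-w,0) \geq 0$ for the negative part, the conclusion $u_1 \geq u_2$ is equivalent to $w^- \equiv 0$, which is what I aim to show.

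The first point is that $w^-$ is an admissible test function. Since $w \geq 0$ on the Dirichlet part $\partial^D\Omega$, the trace of $w^-$ there is zero, so $w^-(t) \in H^1_\theta(\Omega)$ for each $t$; and since $v \mapsto v^-$ is Lipschitz on $H^1$, the map $t \mapsto w^-(t)$ inherits enough regularity from $w \in C^1((0,T),H^1_\theta(\Omega))\cap C([0,T],L^2(\Omega))$. I would then insert $\varphi = w^-(t)$ into the weak formulation for $w$, which holds at each fixed $t \in (0,T)$.

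The core of the proof is then the sign bookkeeping. Using the pointwise identities $w\,w^- = -(w^-)^2$, $\nabla w\cdot\nabla w^- = -\abs{\nabla w^-}^2$ and $\int_\Omega w_t\, w^- = -\tfrac12\frac{d}{dt}\int_\Omega (w^-)^2$, the weak formulation becomes
\begin{equation*}
 -\tfrac12\frac{d}{dt}\int_\Omega (w^-)^2 - \int_\Omega \abs{\nabla w^-}^2 - \int_{\partial^R\Omega}\cot(\tfrac{\pi}{2}\theta)(w^-)^2 = \int_{\partial^R\Omega}\frac{g_1-g_2}{\sin(\tfrac{\pi}{2}\theta)}\,w^- + \int_\Omega (f_1-f_2)\,w^- .
\end{equation*}
The decisive observation is that on $\partial^R\Omega$ one has $0 < \theta \leq 1$, hence $\tfrac{\pi}{2}\theta \in (0,\tfrac{\pi}{2}]$ and therefore $\cot(\tfrac{\pi}{2}\theta) \geq 0$ and $\sin(\tfrac{\pi}{2}\theta) > 0$. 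Consequently the last two integrals on the left are $\leq 0$, while both integrals on the right are $\geq 0$ because $f_1 - f_2 \geq 0$, $g_1 - g_2 \geq 0$ and $w^- \geq 0$. Rearranging, $-\tfrac12\frac{d}{dt}\int_\Omega (w^-)^2$ equals a sum of nonnegative terms, so $t\mapsto \norm{w^-(t)}_{L^2(\Omega)}^2$ is non-increasing. Since $w(0)\geq 0$ gives $\norm{w^-(0)}_{L^2(\Omega)} = 0$, this nonnegative non-increasing quantity stays zero, whence $w^- \equiv 0$ and $u_1 \geq u_2$.

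The step I expect to be the main obstacle is the rigorous justification of the differentiation and chain-rule identities for $w^-$ at the stated regularity, together with the admissibility of $w^-$ as a test function. The clean remedy is to replace $s\mapsto s^-$ by a $C^1$ convex approximation $\gamma_\varepsilon$ with $\gamma_\varepsilon(s)=0$ for $s\geq 0$ and $-1\leq\gamma_\varepsilon'\leq 0$, run the computation above with $\varphi = \gamma_\varepsilon(w)$ (which vanishes on $\partial^D\Omega$ since $w\geq 0$ there), and pass to the limit $\varepsilon\to 0$; everything else is a matter of tracking signs, which the structural restriction $0\leq\theta\leq 1$ is precisely designed to make work.
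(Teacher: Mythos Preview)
The paper does not actually prove this theorem: it simply states that ``the proof of the following theorem can be found in \cite{DdTRB23}'' and moves on. So there is nothing in the present paper to compare your argument against line by line.

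That said, your proposal is the standard Stampacchia energy argument and is correct. Setting $w=u_1-u_2$, testing the weak formulation with $w^{-}$, and using that $\cot(\tfrac{\pi}{2}\theta)\geq 0$ and $\sin(\tfrac{\pi}{2}\theta)>0$ on $\partial^{R}\Omega$ (precisely the structural restriction $0<\theta\leq 1$) is exactly how such comparison principles are established, and it is almost certainly the argument carried out in the cited companion paper. Your identification of the only real technical point --- the admissibility of $w^{-}$ as a test function and the chain rule $\int_\Omega w_t\,w^{-}=-\tfrac12\tfrac{d}{dt}\int_\Omega (w^{-})^2$ at the stated regularity --- is accurate, and the $C^1$ convex regularisation $\gamma_\varepsilon$ you propose is the standard and correct remedy. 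One small clarification: the continuity $t\mapsto w^{-}(t)$ in $H^1_\theta(\Omega)$ follows from the (known) continuity of $v\mapsto v^{-}$ as a map $H^1\to H^1$ combined with $w\in C^1((0,T),H^1_\theta(\Omega))$, so admissibility is not an issue; only the time-differentiation identity genuinely needs the smoothing.
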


For  Dirichlet boundary conditions ($\theta\equiv 0$) we can also state a monotonicity result with respect to the domain.

\begin{theorem}
	\label{thm:compdom}
	Let $\Omega_1\subset\Omega_2\subset\RN$ domains and $0\leq
	u_i\in L^p(\Omega_i)$ for $i=1,2$ with $1\leq p \leq \infty$
	such that $0\leq u_1\leq \restr{u_2}{\Omega_{1}}$. Then, if we
	denote $S_{\Omega_{i}} ^0(t)$ the heat semigroup with zero Dirichlet
	boundary conditions in $\Omega_{i}$, we have that: 
	\begin{equation}
		S^{0}_{\Omega_1}(t)u_1\leq S^{0}_{\Omega_2}(t)u_2
		\qquad in \ \Omega_1, \ t>0.
	\end{equation}
	Therefore, the  heat kernels satisfy 
	\begin{equation}
		k^{0}_{\Omega_1}(x,y,t)\leq k^{0}_{\Omega_2}(x,y,t), \qquad x,y\in \Omega_1, \quad t>0.
	\end{equation}

	In particular, for any exterior domain with Dirichlet boundary
	conditions, we have     the Gaussian bound 
	\begin{equation}
		\label{eq:comparison_kernels_Dirichlet_RN}
		0 < k^0_{\Omega}(x,y,t) \leq k_{\R^{N}}(x,y,t) =
		\frac{e^{-\frac{\abs{x-y}^2}{4\pi t}}}{(4\pi t)^{N/2}} \qquad
		x,y\in \Omega , \quad t>0 . 
	\end{equation}
\end{theorem}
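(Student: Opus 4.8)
The plan is to prove the semigroup inequality first, then deduce the kernel comparison by testing against nonnegative functions, and finally specialise to $\Omega_2=\RN$ to obtain the Gaussian bound. The whole argument rests on the parabolic comparison principle together with two structural facts: the Dirichlet semigroup is positivity preserving, and since $\Omega_1\subset\Omega_2$ any solution on $\Omega_2$ is automatically a solution on the smaller set $\Omega_1$.

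For the semigroup inequality it suffices to treat bounded, smooth data, to which the general case $1\leq p\leq\infty$ reduces by approximation together with the $L^p$-contraction and positivity of the semigroups; for such data the solutions are classical and bounded. Set $w_i(x,t)=S^0_{\Omega_i}(t)u_i(x)$. Because $\Omega_1\subset\Omega_2$, the function $w_2$ solves the heat equation in $\Omega_2$ and hence also in $\Omega_1$, so $v\defeq\restr{w_2}{\Omega_1}-w_1$ satisfies $v_t-\Lap v=0$ in $\Omega_1\times(0,\infty)$. At $t=0$ we have $v(\cdot,0)=\restr{u_2}{\Omega_1}-u_1\geq 0$ by hypothesis, and on $\partial\Omega_1$ we have $w_1=0$ (zero Dirichlet condition) while $w_2\geq 0$ since $u_2\geq 0$ and $S^0_{\Omega_2}(t)$ preserves positivity; hence $v=w_2\geq 0$ on $\partial\Omega_1\times(0,\infty)$. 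The parabolic comparison principle (in the variable-domain form of Theorem \ref{thm:compvarneumann}, specialised to Dirichlet data) then yields $v\geq 0$, that is $w_1\leq\restr{w_2}{\Omega_1}$ in $\Omega_1$.

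Once the semigroup inequality holds, the kernel comparison follows by testing. For any $0\leq\varphi\in C_c^\infty(\Omega_1)$, let $u_1=\varphi$ on $\Omega_1$ and let $u_2$ be $\varphi$ extended by zero to $\Omega_2$; then $0\leq u_1\leq\restr{u_2}{\Omega_1}$ (with equality), so $\int_{\Omega_1}k^0_{\Omega_1}(x,y,t)\varphi(y)\,dy\leq\int_{\Omega_2}k^0_{\Omega_2}(x,y,t)\varphi(y)\,dy=\int_{\Omega_1}k^0_{\Omega_2}(x,y,t)\varphi(y)\,dy$ for every $x\in\Omega_1$, the last equality because $\varphi$ is supported in $\Omega_1$. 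As this holds for all such $\varphi$, we obtain $k^0_{\Omega_1}(x,y,t)\leq k^0_{\Omega_2}(x,y,t)$ for a.e. $y$, and then for every $y\in\Omega_1$ by continuity of the kernels from Theorem \ref{thm:prop2}. Taking $\Omega_2=\RN$, where $k_{\RN}(x,y,t)=G(x-y,t)$, and recalling the positivity $k^0_\Omega>0$ already recorded in \eqref{eq:comparison_kernels_theta}, gives the stated Gaussian bound.

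The main obstacle is the maximum principle on the possibly unbounded domain $\Omega_1$: a priori the infimum of $v$ could escape to spatial infinity. I would resolve this either by exhausting $\Omega_1$ with the bounded smooth domains $\Omega_1\cap B(0,R)$, controlling the extra boundary $\{\abs{x}=R\}\cap\Omega_1$ by the Gaussian upper bound \eqref{eqn:gyryabound} (which forces $w_1$, $w_2$, and hence $v$, to be uniformly small there as $R\to\infty$ on each finite time interval) and then letting $R\to\infty$; or equivalently by a Phragmén–Lindelöf maximum principle justified by the same Gaussian decay of the bounded solutions at infinity. This is exactly the technical content encapsulated by the variable-domain comparison principle, so once that is in place the remaining steps are routine.
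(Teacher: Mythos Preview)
Your proposal is correct and follows essentially the same approach as the paper: apply a parabolic comparison principle in $\Omega_1$ (using that $S^0_{\Omega_2}(t)u_2\geq 0$ on $\partial\Omega_1$ while $S^0_{\Omega_1}(t)u_1=0$ there), then deduce the kernel inequality by testing against nonnegative $\varphi\in C_c^\infty(\Omega_1)$, and finally specialise to $\Omega_2=\RN$. Two small differences worth noting: the paper invokes Theorem~\ref{thm:neugeqdir} (which already covers unbounded domains with compact boundary, so your Phragm\'en--Lindel\"of discussion is not needed here) rather than Theorem~\ref{thm:compvarneumann}, whose statement demands strict inequalities; and the paper extends to non-smooth data by first obtaining the kernel inequality and then using the integral representation~\eqref{eqn:ackrnpre}, rather than by $L^p$-approximation.
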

\begin{proof}
	Assume the initial data is smooth. Then,
	$u_{1}(t)=S^{0}_{\Omega_1}(t)u_1$ and  $u_{2}(t)=
	S^{0}_{\Omega_2}(t)u_2$ satisfy the heat  heat
	equation in $\Omega_1$, $\restr{u_{2}(t)}{\partial \Omega_{1}} \geq
	\restr{u_{1}(t)}{\partial \Omega_{1}} =0$ and the initial data
	satisfy $\restr{u_2}{\partial \Omega_1}\geq u_1$. Hence,
	using Theorem \ref{thm:neugeqdir} we obtain $S_{\Omega_1}(t)u_1\leq
	S_{\Omega_2}(t)u_2$. 
	
	Therefore for every $\varphi\in C^\infty_c(\Omega_1)$, we have that, for $x\in \Omega_1$
	\begin{equation}
		\int_{\Omega_2}k^{0}_{\Omega_2}(x,y,t)\varphi(y)dy=S^{0}_{\Omega_2}(t)\varphi(x)\geq
		S^{0}_{\Omega_1}(t)\varphi(x)=\int_{\Omega_1}k^{0}_{\Omega_1}(x,y,t)\varphi(y)dy, 
	\end{equation}
	so then $k^{0}_{\Omega_2}(x,y,t) \geq k^{0}_{\Omega_1}(x,y,t)$ for
	every $x,y\in \Omega_1$ and $t>0$. This immediately implies the result
	for non-smooth initial data by \eqref{eqn:ackrnpre}. 
\end{proof}

      The following theorem is helpful to compare solutions of
      parabolic equations in time-dependent domains. It allows
      $\theta-$boundary conditions and Dirichlet conditions
      in disjoint boundaries. The proof of this result in a more
      general setting can be found in \cite{friedman2008partial}
      Chapter 2, Theorems 1, 16 and 17.
      \begin{theorem}[Comparison Principle for Variable Domains]
	\label{thm:compvarneumann}
	Let $\Omega_{[t_0,t_1]}\subset \mathbb{R}^{N+1}$ be a space-time
	domain. Denote $\Omega_s=\{(x,t)\in \Omega_{[t_0,t_1]} :
	t=s\}$. Assume the boundary of $\Omega_{[t_0,t_1]}$ consists on
	the closure of a $N-dimensional$ domain $\Omega_{t_0}$ lying on
	$t=t_0$, the closure of a $N-dimensional$ domain $\Omega_{t_1}$
	lying on $t=t_1$ and a (not necessarily connected)
	$N-dimensional$ manifold $S$ lying on the strip $t_0\leq t\leq
	t_1$. Assume also that there is a curve $\gamma$ in
	$\Omega_{[t_0,t_1]}$ which connects $\Omega_{t_0}$ and $\Omega_{t_1}$ and
	whose $t$ coordinate is nondecreasing. Consider that
        $S=S_1\cup S_2$ where $S_1$ and $S_2$ are distinct connected
        manifolds. Assume $S_2$ is independent of time, that is
        $S_2=\Gamma\times (t_0,t_1)$. Furthermore, consider some
        $\theta$-boundary conditions (as in \eqref{eqn:thetabc}) on
        $\Gamma$. Then, for any $u_1, u_2\in C^{2,1}(\Omega_{[t_0,t_1]})$ such that
	\begin{equation}
          \left\{
            \begin{aligned}
              & \frac{\partial u_1}{\partial t}-\Lap u_1 > 0 \ \ \ && in \ \Omega_{[t_0,t_1]}, \\
              & \frac{\partial u_2}{\partial t}-\Lap u_2 \leq 0 \ \ \ && in \ \Omega_{[t_0,t_1]}, \\
              & u_1> u_2 \ \ \ && in \ \Omega_{t_0}\cup S_1, \\
              & B_\theta(u_1(t))> B_\theta(u_2(t)) \ \ \ && on \
              \Gamma, \ \forall t\in (t_0,t_1),
            \end{aligned}
          \right.
	\end{equation}
	we have that
	\begin{equation}
          u_1> u_2 \ \ \ in \ \Omega_{[t_0,t_1]}.
	\end{equation}
      \end{theorem}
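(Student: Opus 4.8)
The plan is to reduce the statement to a one–sided maximum principle for the difference $w\defeq u_2-u_1\in C^{2,1}(\Omega_{[t_0,t_1]})$ and then to locate where $w$ could possibly attain a nonnegative maximum. Subtracting the two differential inequalities, and using that $u_{1,t}-\Lap u_1>0\geq u_{2,t}-\Lap u_2$, one obtains the \emph{strict} parabolic inequality
\[
w_t-\Lap w=(u_{2,t}-\Lap u_2)-(u_{1,t}-\Lap u_1)<0\qquad\text{in }\Omega_{[t_0,t_1]}.
\]
By linearity of $B_\theta$, the boundary hypotheses translate into $w<0$ on $\Omega_{t_0}\cup S_1$ and $B_\theta(w)=B_\theta(u_2)-B_\theta(u_1)<0$ on $\Gamma$. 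The goal is then $w<0$ throughout $\Omega_{[t_0,t_1]}$, and I would argue by contradiction.

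Suppose instead $M\defeq\max_{\overline{\Omega_{[t_0,t_1]}}}w\geq 0$; since $t_1<\infty$ the region is bounded and $w$ is continuous on its closure, so $M$ is attained at some $(x^*,t^*)$. The nondecreasing–$t$ curve $\gamma$ joining $\Omega_{t_0}$ to $\Omega_{t_1}$ guarantees that the parabolic boundary is exactly $\Omega_{t_0}\cup S$ (no part of the domain is disconnected forward in time from the initial slice), so the usual forward structure applies. If $(x^*,t^*)$ were a spatially interior point with $t^*>t_0$, the first– and second–order conditions give $\nabla w(x^*,t^*)=0$, $\Lap w(x^*,t^*)\leq 0$ and $w_t(x^*,t^*)\geq 0$ (two–sided if $t_0<t^*<t_1$, one–sided from the left if $t^*=t_1$), whence $w_t-\Lap w\geq 0$, contradicting the strict inequality. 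On $\Omega_{t_0}\cup S_1$ we have $w<0\leq M$, and at any point of $\Gamma$ carrying a Dirichlet condition ($\theta=0$) we have $B_\theta(w)=w<0$, again contradicting $w(x^*,t^*)=M\geq 0$. Hence the only surviving possibility is that $M$ is attained at a point $(x^*,t^*)$ lying on the Robin/Neumann part of $\Gamma$ (where $0<\theta\leq 1$), with $t^*>t_0$.

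This last case is the crux, and I would dispatch it with the parabolic Hopf boundary lemma. The analysis above shows $w<M$ at every spatially interior point and on $\Omega_{t_0}\cup S_1$, so $M$ is attained only on $\Gamma$; since $\partial\Omega$ is of class $C^{2,\alpha}$ the interior sphere condition holds, and $w$ is a strict subsolution ($w_t-\Lap w<0$). Hopf's lemma therefore yields strict positivity of the outward normal derivative, $\tfrac{\partial w}{\partial n}(x^*,t^*)>0$. Because $0<\theta(x^*)\leq 1$ gives $\sin(\tfrac{\pi}{2}\theta(x^*))>0$ and $\cos(\tfrac{\pi}{2}\theta(x^*))\geq 0$, and $w(x^*,t^*)=M\geq 0$,
\[
B_\theta(w)(x^*,t^*)=\sin\!\Big(\tfrac{\pi}{2}\theta(x^*)\Big)\frac{\partial w}{\partial n}(x^*,t^*)+\cos\!\Big(\tfrac{\pi}{2}\theta(x^*)\Big)\,w(x^*,t^*)>0,
\]
which contradicts $B_\theta(w)<0$ on $\Gamma$. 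Thus $M<0$, i.e. $w<0$ and $u_1>u_2$ throughout $\Omega_{[t_0,t_1]}$.

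The main obstacle is precisely this Hopf step at the Robin/Neumann boundary: one must verify that the maximum is genuinely not attained in the interior (so that the strong form of Hopf's lemma applies) and that the geometry of the time–dependent lateral boundary $S_2=\Gamma\times(t_0,t_1)$ admits an interior tangent ball at $(x^*,t^*)$, which is where the $C^{2,\alpha}$ regularity of $\partial\Omega$ and the time–independence of $S_2$ enter. The remaining ingredients—the weak maximum principle on the parabolic boundary and the sign bookkeeping for the strict inequalities—are routine, and this is the elementary version of the more general comparison result of \cite{friedman2008partial}, Chapter 2, Theorems 1, 16 and 17.
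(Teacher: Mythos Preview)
The paper does not supply its own proof of this theorem: immediately before the statement it says ``The proof of this result in a more general setting can be found in \cite{friedman2008partial} Chapter 2, Theorems 1, 16 and 17,'' and nothing further is given. Your argument is the standard maximum-principle plus parabolic Hopf-lemma route that underlies those results in Friedman, and it is correct; you even cite the same reference at the end. One small point worth making explicit is that the boundary description in the hypotheses (closures of $N$-dimensional domains at $t=t_0,t_1$ together with the lateral manifold $S$) forces $\Omega_{[t_0,t_1]}$ to be bounded, which you use when asserting that $M$ is attained.
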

      \section{Removable Singularities}
      Here we present a result which allows us to remove singularities
      in some cases when we have solutions of the heat equation in the
      whole space except in a point. This topic is further studied in
      \cite{aronsonremovable}.
      \begin{theorem}
	\label{thm:removablesing}
	Let $N\geq 2$ and
        $u\in L^\infty([t_0,t_1],L^\infty(\RN\backslash\{0\}))$ a
        bounded solution of the heat equation in
        $\RN\backslash\{0\}\times [t_0,t_1]$, that is,
        $u\in C^{(2,1)}_{x,t}(\RN\backslash\{0\}\times [t_0,t_1])$
        such that
	\begin{equation}
          u_t(x,t)-\Lap u(x,t) = 0 \qquad \forall x\in \RN\backslash\{0\}, \quad \forall t\in [t_0,t_1],
	\end{equation}
        then, $u$ can be extended so that
        $u\in C^{(2,1)}(\RN\times [t_0,t_1])$ and it is a solution of
        the heat equation in the whole space
	\begin{equation}
          u_t(x,t)-\Lap u(x,t) = 0 \qquad \forall x\in \RN, \quad \forall t\in [t_0,t_1].
	\end{equation}
      \end{theorem}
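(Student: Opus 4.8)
The plan is to show that the bounded function $u$, once extended arbitrarily across the measure-zero set $\{x=0\}$, is a \emph{distributional} solution of the heat equation on the whole open cylinder $\RN\times(t_0,t_1)$, and then to invoke the hypoellipticity of the heat operator (interior parabolic regularity) to upgrade this to a classical $C^{2,1}$ solution. Since $u$ is bounded it is locally integrable, so it defines a distribution regardless of its values on $\{x=0\}$; and since $u$ already solves the equation classically on $\RN\setminus\{0\}$, the distributional identity $\int_{t_0}^{t_1}\int_{\RN} u\,(-\varphi_t-\Lap\varphi)\,dx\,dt=0$ holds for every test function $\varphi\in C^\infty_c(\RN\times(t_0,t_1))$ whose spatial support avoids the origin. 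The whole point is to remove that restriction.

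First I would fix an arbitrary $\varphi\in C^\infty_c(\RN\times(t_0,t_1))$ and a radial spatial cutoff $\eta_\eps$ with $\eta_\eps\equiv0$ near $x=0$ and $\eta_\eps\equiv1$ away from it, and test the equation against $\varphi\eta_\eps$, which is admissible because it is supported away from the singular axis. Using $\partial_t(\varphi\eta_\eps)=\eta_\eps\varphi_t$ and $\Lap(\varphi\eta_\eps)=\eta_\eps\Lap\varphi+2\nabla\eta_\eps\cdot\nabla\varphi+\varphi\,\Lap\eta_\eps$, this produces the identity
\[
\int_{t_0}^{t_1}\!\!\int_{\RN} u\,\eta_\eps(-\varphi_t-\Lap\varphi)\,dx\,dt
=\int_{t_0}^{t_1}\!\!\int_{\RN} u\,\big(2\nabla\eta_\eps\cdot\nabla\varphi+\varphi\,\Lap\eta_\eps\big)\,dx\,dt \eqdef R_\eps .
\]
As $\eta_\eps\to1$ pointwise off the origin, dominated convergence (using $\abs{u}\le\|u\|_{L^\infty}$ on the fixed compact support of $\varphi$) sends the left-hand side to the integral I want to show vanishes. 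Hence everything reduces to proving $R_\eps\to0$.

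The estimate of $R_\eps$ is where the dimension enters and is the crux of the argument. For $N\ge3$ I would take the usual scaled cutoff $\eta_\eps(x)=\eta(x/\eps)$, supported in the annulus $\{\eps\le\abs{x}\le2\eps\}$ of volume $\sim\eps^N$, with $\abs{\nabla\eta_\eps}\lesssim\eps^{-1}$ and $\abs{\Lap\eta_\eps}\lesssim\eps^{-2}$; since $u,\nabla\varphi,\varphi$ are bounded this gives $\abs{R_\eps}\lesssim\eps^{N-1}+\eps^{N-2}\to0$. The borderline case $N=2$ is the main obstacle, because the $\Lap\eta_\eps$ term is then only $O(1)$: it reflects the fact that a point has zero logarithmic capacity, and a naive cutoff is too crude. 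The remedy is a \emph{logarithmic} cutoff, spreading the transition over the much wider annulus $\{\eps\le\abs{x}\le\sqrt\eps\}$, e.g. $\eta_\eps(x)=\chi\big(\tfrac{2\log(\abs{x}/\eps)}{\abs{\log\eps}}\big)$ with $\chi$ smooth, equal to $0$ for negative argument and to $1$ past $1$. Exploiting that $\log\abs{x}$ is harmonic in the plane, the second order term $\Lap\eta_\eps$ involves only $\chi''$ and none of $\Lap\log\abs{x}$, and a direct computation in polar coordinates gives both $\int\abs{\nabla\eta_\eps}\,dx=O(\abs{\log\eps}^{-1})$ and $\int\abs{\Lap\eta_\eps}\,dx=O(\abs{\log\eps}^{-1})$, so again $R_\eps\to0$.

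Once $R_\eps\to0$ is established in all cases $N\ge2$, the extended $u$ is a distributional solution of $u_t-\Lap u=0$ on $\RN\times(t_0,t_1)$. I would then conclude by hypoellipticity of the heat operator: every distributional solution coincides almost everywhere with a $C^\infty$ function, so the extension is smooth, in particular $C^{2,1}$, and satisfies the equation classically across $\{x=0\}$; regularity up to the time endpoints $t_0,t_1$ follows since there $u$ is already $C^{2,1}$ away from the origin and the removable-singularity argument is purely local in space. This is consistent with the removable-singularity theory for parabolic equations referenced in \cite{aronsonremovable}.
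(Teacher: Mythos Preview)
Your argument is correct and is essentially a self-contained proof of the special case of Aronson's removable-singularity theorem that is needed here. The paper, by contrast, does not carry out any cutoff computation: it simply invokes Theorem~1 of \cite{aronsonremovable} with $K=\{0\}\times[t_0,t_1]$ and verifies the single hypothesis that a point has zero $H^{1}$-capacity in $\RN$ for $N\geq 2$, exhibiting the test functions $\psi_\alpha(x)=(1-\abs{x}^\alpha)_+$ and checking that $\int\abs{\nabla\psi_\alpha}^2\to 0$ as $\alpha\to 0$.

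The two routes are closely related in spirit---your cutoff functions $1-\eta_\eps$ are precisely the kind of competitors witnessing zero capacity, and your logarithmic cutoff in dimension two is the dynamic analogue of the paper's $\psi_\alpha$ with $\alpha\to 0$---but the packaging is different. The paper's approach is shorter because it outsources the analytic work to Aronson's general theorem; yours is longer but entirely elementary, needing only dominated convergence and the hypoellipticity of the heat operator, and it makes the role of the dimension (and the logarithmic borderline at $N=2$) completely explicit. Either is acceptable; yours has the advantage of being self-contained, the paper's has the advantage of brevity and of pointing the reader to a more general result.
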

      \begin{proof}
        It is just enough to apply Theorem 1 from
        \cite{aronsonremovable} with the set $K=\{0\}\times [t_0,t_1]$
        which is a $(2,\infty)$-null set just because a point has zero
        capacity in $\RN$ for $N\geq 2$.

	This is easily seen considering the functions:
        \begin{equation}
          \psi_\alpha(x) = \left\{ \begin{aligned}
              & 1-\abs{x}^{\alpha} \qquad && \forall \abs{x}\leq 1 \\
              & 0 \qquad \qquad \quad \  && \forall \abs{x}\geq 1  
            \end{aligned}
          \right. ,
        \end{equation}
        because $\psi_n(0)=1$, $\psi_n\in H^1_0(\RN)$ (for $N\geq 2$)
        and
        \begin{equation}
          \lim_{\alpha\to 0} \int_{\RN} \abs{\nabla \psi_{\alpha}}^2 = \lim_{\alpha\to 0} \int_{B(0,1)} \alpha^2 \abs{x}^{2\alpha-2} = \lim_{\alpha\to 0} \dfrac{\alpha^2}{2\alpha+n-2}\omega_{N-1}=0,
        \end{equation}
        where $\omega_{N-1}$ is the volume of the $(N-1)-dimensional$
        unit ball.
      \end{proof}

      \section{Schauder Estimates}
      Here we present some parabolic Schauder estimates, which allow
      us to estimate the derivatives of a solution of the heat
      equation just with the $L^\infty$ norm of the solutions. These
      are classical results which can be found, for example, in
      \cite{friedman2008partial} Chapter 3 Theorem 5.

      \begin{theorem}
	\label{thm:schauderest}
	Let $K\subset \RN$ a domain, $Q\defeq K\times [T_1,T_2]$ and
        $v\in L^\infty(Q)\cap C^{\infty}(Q)$ be a solution of the heat equation. Define,
        for any $(x,t)\in Q$ the parabolic distance
        $d_{(x,t)}=\inf\{(\abs{x-\bar{x}}^2+\abs{t-\bar{t}})^{1/2} \ :
        \ (\bar{x},\bar{t})\in \partial Q \backslash\{(x,T_2):x\in
        \Omega\} \}$. Then,
	\begin{equation}
          d_{(x,t)}\abs{Dv(x,t)}+d^2_{(x,t)}\abs{D^2v(x,t)}\leq C\norm{v}_{L^\infty(Q)} \qquad\forall (x,t)\in Q,
	\end{equation}
	where $C$ is independent of $v$, $x$ $t$, $K$, $T_1$ and
        $T_2$, $Dv$ represent any first order spatial derivative of
        $v$ and $D^2v$ any second order spatial derivative of $v$.
      \end{theorem}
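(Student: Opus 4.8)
The plan is to reduce the estimate to a single \emph{normalized} interior estimate on a fixed backward parabolic cylinder, exploiting the scale invariance of the heat equation, and then transport it to an arbitrary point of $Q$ by means of the weight $d_{(x,t)}$. The point of the weighting is precisely that, after the parabolic rescaling $(y,s)\mapsto(\rho y,\rho^2 s)$, $w\mapsto w(\rho y,\rho^2 s)$, both $d\,\abs{Dv}$ and $d^2\,\abs{D^2v}$ are invariant, so the whole inequality is scale invariant and it suffices to prove it once at unit scale.

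First I would record the normalized estimate: there is a universal constant $C_0>0$ such that every $w\in L^\infty(Q_1^-)\cap C^\infty(Q_1^-)$ solving $w_s-\Lap w=0$ on the backward unit cylinder $Q_1^-\defeq B(0,1)\times(-1,0]$ satisfies
\[
\abs{Dw(0,0)}+\abs{D^2w(0,0)}\le C_0\,\norm{w}_{L^\infty(Q_1^-)}.
\]
This is the classical interior parabolic estimate (see \cite{friedman2008partial}, Chapter 3). For a self-contained argument I would fix a cutoff $\eta\in C^\infty_c$ equal to $1$ on $Q_{1/2}^-$ and vanishing near the lateral and bottom parabolic boundary of $Q_1^-$, but \emph{not} near the top $s=0$, and apply Duhamel's formula to $W\defeq\eta w$, which solves $W_s-\Lap W=g$ with $g=(\eta_s-\Lap\eta)w-2\nabla\eta\cdot\nabla w$ supported in the region $\{\eta\neq 1\}$, a set bounded away from the origin. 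Representing $W$ against the Gaussian kernel $\Gamma$ and differentiating under the integral at $(0,0)$, the kernel derivatives $D\Gamma,D^2\Gamma$ are evaluated only on $\mathrm{supp}\,g$, where they are smooth and bounded; the term $\nabla\eta\cdot\nabla w$ is removed by integrating by parts in the space variable, transferring the derivative onto the smooth, compactly supported factors. Since $W=w$ near $(0,0)$, this yields the displayed bound with only $\norm{w}_{L^\infty}$ on the right.

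Second, for a general $(x_0,t_0)\in Q$ with $d\defeq d_{(x_0,t_0)}>0$ I would set $\rho\defeq d/\sqrt2$ and check that the backward cylinder $Q_\rho^-(x_0,t_0)\defeq B(x_0,\rho)\times(t_0-\rho^2,t_0]$ lies in $Q$. Indeed any $(x,t)$ in it has parabolic distance $(\abs{x-x_0}^2+\abs{t-t_0})^{1/2}<\rho\sqrt2=d$ to $(x_0,t_0)$, so it cannot sit on $\partial Q$ minus the top; and testing the bottom and lateral faces separately gives $t_0-T_1\ge d^2$ (hence $t_0-\rho^2>T_1$) and $\mathrm{dist}(x_0,\partial K)\ge d$ (hence $B(x_0,\rho)\subset K$), so $Q_\rho^-(x_0,t_0)\subset Q$. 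I then rescale, putting $w(y,s)\defeq v(x_0+\rho y,\,t_0+\rho^2 s)$ on $Q_1^-$; since the heat equation is invariant under this scaling and $\norm{w}_{L^\infty(Q_1^-)}\le\norm{v}_{L^\infty(Q)}$, the normalized estimate applies. Unwinding $Dw(0,0)=\rho\,(Dv)(x_0,t_0)$ and $D^2w(0,0)=\rho^2\,(D^2v)(x_0,t_0)$ gives $\rho\abs{Dv(x_0,t_0)}+\rho^2\abs{D^2v(x_0,t_0)}\le C_0\norm{v}_{L^\infty(Q)}$, and since $d=\sqrt2\,\rho$ this is the claimed inequality with $C=2C_0$.

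The main obstacle is Step~1: closing the normalized estimate with only $\norm{w}_{L^\infty}$ on the right, without a circular appeal to a gradient bound. The integration-by-parts step above is what achieves this, and it works precisely because $\nabla\eta$ is supported away from the point where $\Gamma$ is singular. Everything else is routine: the scale invariance, the cylinder inclusion, and the independence of $C$ from $K$, $T_1$, $T_2$ are automatic, since the argument is purely interior and uses only a small cylinder around each point.
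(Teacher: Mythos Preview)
The paper does not actually prove this theorem: it simply records it as a classical result and refers the reader to \cite{friedman2008partial}, Chapter~3, Theorem~5. Your proposal, by contrast, supplies a self-contained argument, and the argument is correct. The two-step strategy---first a normalized interior estimate on the unit backward cylinder via a cutoff and the Duhamel representation against the Gaussian, then a parabolic rescaling to transport it to an arbitrary point with the weight $d_{(x,t)}$---is precisely the standard modern route to scale-invariant interior estimates. The delicate point you flag (closing Step~1 with only $\norm{w}_{L^\infty}$ on the right) is handled correctly: on $\mathrm{supp}\,\nabla\eta$ the spatial variable is bounded away from the origin, so all derivatives of $\Gamma$ appearing after the integration by parts are integrable in $\sigma$ thanks to the exponential decay of the Gaussian, and no gradient of $w$ survives. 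Your cylinder-inclusion check with $\rho=d/\sqrt2$ is also fine. So where the paper outsources the result, you give an honest proof; nothing is lost and the constant you obtain is manifestly universal, exactly as claimed.
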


      In the same way, in the elliptic framework, we have also
      Schauder estimates.  The proof of the following results can be
      found for example in \cite{gilbarg2015elliptic} Theorem 4.6:
      \begin{theorem}
	\label{thm:4.6}
	Let $\Omega\subset\RN$ and $u\in C^2(\Omega)$ such that
        $$\Lap u(x) =0 \ \ \ \forall x\in\Omega.$$ Then, for
        $x_0\in \Omega$ and any two concentric balls
        $B_1\defeq B(x_0,R)$ and
        $B_{2}\defeq B(x_0, 2R)\subset \subset \Omega$, we have
	\begin{equation}
          \label{eqn:thm4.6}
          R\abs{Du}_{B_1}+R^2\abs{D^2u}_{B_1}\leq C\norm{u}_{L^\infty(B_2)},
	\end{equation}
        where we denote
        $\abs{Du}_{B_1}=\max_{i}\norm{D_i u}_{L^\infty(B_1)}$,
        $\abs{D^2u}_{B_1}=\max_{i,j}\norm{D_{ij} u}_{L^\infty(B_1)}$
        and $C$ is a constant independent on $u$, $x_0$ and $R$.
      \end{theorem}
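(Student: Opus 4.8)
The plan is to prove this as the classical interior gradient and second-derivative estimate for harmonic functions, exploiting two facts: every partial derivative of a harmonic function is again harmonic, and harmonic functions obey the mean value property over balls. So first I would record that $\Lap u = 0$ in $\Omega$ forces $D_i u$ and $D_{ij} u$ to be harmonic in $\Omega$ as well, so the mean value property is available for each of them on any ball compactly contained in $\Omega$.

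The core tool I would isolate is a pointwise gradient estimate: for any function $h$ harmonic on a neighbourhood of $\overline{B(y,r)}\subset\Omega$ one has $\abs{D_i h(y)}\le \frac{N}{r}\sup_{B(y,r)}\abs{h}$. To see this, apply the mean value property to the harmonic function $D_i h$ and convert the volume average into a boundary flux of $h$ by the divergence theorem,
\[
D_i h(y)=\frac{1}{\abs{B(y,r)}}\int_{B(y,r)} D_i h\,dz=\frac{1}{\abs{B(y,r)}}\int_{\partial B(y,r)} h\, n_i\, dS,
\]
so that $\abs{D_i h(y)}\le \frac{\abs{\partial B(y,r)}}{\abs{B(y,r)}}\sup_{B(y,r)}\abs{h}=\frac{N}{r}\sup_{B(y,r)}\abs{h}$, the last equality being the exact scaling of the surface-to-volume ratio of a ball. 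Taking $h=u$ and $r=R$ around a point $y\in B_1=B(x_0,R)$, and noting $B(y,R)\subset B(x_0,2R)=B_2$ since $\abs{z-x_0}\le\abs{z-y}+\abs{y-x_0}<2R$, gives $\abs{D_i u(y)}\le \frac{N}{R}\norm{u}_{L^\infty(B_2)}$; taking the supremum over $y\in B_1$ and the maximum over $i$ yields $R\abs{Du}_{B_1}\le N\norm{u}_{L^\infty(B_2)}$.

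For the second-order bound I would iterate the same lemma on the halved scale $R/2$ so that all averaging balls still fit inside $B_2$. For $y\in B_1$, applying the pointwise lemma to the harmonic function $D_i u$ with radius $R/2$ gives $\abs{D_{ij}u(y)}\le \frac{2N}{R}\sup_{B(y,R/2)}\abs{D_i u}$, where $B(y,R/2)\subset B(x_0,3R/2)\subset B_2$. For each $z\in B(y,R/2)$ a second application of the lemma to $u$ with radius $R/2$ gives $\abs{D_i u(z)}\le \frac{2N}{R}\sup_{B(z,R/2)}\abs{u}\le \frac{2N}{R}\norm{u}_{L^\infty(B_2)}$, since $B(z,R/2)\subset B(x_0,2R)=B_2$. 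Composing the two estimates yields $\abs{D_{ij}u(y)}\le \frac{4N^2}{R^2}\norm{u}_{L^\infty(B_2)}$, hence $R^2\abs{D^2 u}_{B_1}\le 4N^2\norm{u}_{L^\infty(B_2)}$. Adding the first- and second-order displays produces \eqref{eqn:thm4.6} with $C=N+4N^2$, depending only on $N$.

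The only real point of care—more bookkeeping than obstacle—is the geometry of the nested balls: one must choose the intermediate radius ($3R/2$) and the sampling radius ($R/2$) so that every ball used in the iteration is contained in $B_2$, and observe that the factor $\abs{\partial B}/\abs{B}=N/r$ scales precisely like $1/r$, which is exactly what makes the final constant independent of $R$ and of $x_0$. Equivalently, I would note that rescaling by $u\mapsto u(x_0+R\,\cdot\,)$ reduces the whole statement to the fixed case $x_0=0$, $R=1$, so the $R$- and $x_0$-independence of $C$ is manifest from homogeneity.
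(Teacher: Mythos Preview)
Your proof is correct; it is precisely the classical mean-value-property argument for interior derivative estimates of harmonic functions (as in, e.g., Theorem~2.10 of \cite{gilbarg2015elliptic} or Evans), and the geometric bookkeeping with the nested balls $B(y,R/2)\subset B(x_0,3R/2)$ and $B(z,R/2)\subset B_2$ is handled cleanly. The paper itself does not supply a proof for this statement: it simply records it as a standard Schauder estimate and refers to \cite{gilbarg2015elliptic}, Theorem~4.6, so your argument is in fact more self-contained than what appears in the paper and is essentially the specialization to the Laplacian of the cited result.
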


    \end{appendices}

\bibliographystyle{alpha-mod}
\newcommand{\etalchar}[1]{$^{#1}$}

\end{document}